\newcommand*{\addFileDependency}[1]{
  \typeout{(#1)}
  \@addtofilelist{#1}
  \IfFileExists{#1}{}{\typeout{No file #1.}}
}
\setlist[enumerate]{leftmargin=.5in}
\setlist[itemize]{leftmargin=.5in}
\crefname{hypothesis}{Hypothesis}{Hypotheses}
\title{Randomized low-rank approximations beyond Gaussian random matrices\thanks{Submitted to the editors DATE.
\funding{The work of A.K.S. was funded in part by the National Science Foundation through the awards DMS-1845406 and DMS-1745654. The work of A. Międlar was supported by the National Science Foundation through the awards DMS-2144181 and DMS-2324958.}}}
\author{ Arvind K.\ Saibaba\thanks{Department of Mathematics, North Carolina State University  
  (\email{asaibab@ncsu.edu}, \url{https://asaibab.math.ncsu.edu}).} 
\and Agnieszka Mi\k{e}dlar\thanks{Department of Mathematics, Virginia Tech, Blacksburg, VA 
  (\email{amiedlar@vt.edu}, \url{https://personal.math.vt.edu/amiedlar/}).}}
\DeclareMathOperator{\diag}{diag}
\newcommand{\R}{\mathbb{R}}
\newcommand{\sr}{\mathsf{sr}}
\newcommand{\intdim}{\mathsf{intdim}}
\renewcommand{\vec}[1]{\mathbf{#1}}
\newcommand{\mat}[1]{\mathbf{#1}}
\newcommand{\mathat}[1]{\widehat{\mathbf{#1}}}
\newcommand{\B}[1]{\boldsymbol{#1}}
\newcommand{\rad}{\mathsf{rad}}
\newcommand{\inner}[2]{\langle #1, #2\rangle}
\renewcommand{\t}{^\top}
\newcommand{\mc}[1]{\mathcal{#1}}
\newcommand{\bmat}[1]{\begin{bmatrix}#1\end{bmatrix}}
\newcommand{\rank}{\mathsf{rank}\,}
\newcommand{\trace}{\mathsf{trace}\,}
\newcommand{\uninorm}[1]{{\left\vert\kern-0.25ex\left\vert\kern-0.25ex\left\vert #1    \right\vert\kern-0.25ex\right\vert\kern-0.25ex\right\vert}}
\newcommand{\Oh}{\widehat{\B\Omega}}
\newcommand{\prob}[1]{\mathbb{P}\left\{ #1 \right\}}
\newcommand{\expect}[1]{\mathbb{E}\left[ #1 \right]}
\begin{document}
\maketitle
\begin{abstract}
This paper expands the analysis of randomized low-rank 
approximation beyond the Gaussian distribution to four classes of random matrices:  (1) independent sub-Gaussian entries, (2) independent sub-Gaussian columns, (3) independent bounded columns, and (4) independent columns with bounded second moment.  Using a novel interpretation of the low-rank approximation error involving sample covariance matrices, we provide insight into the requirements of a \textit{good random matrix} for the purpose of randomized low-rank approximation. 
Although our bounds involve unspecified absolute constants
(a consequence of underlying non-asymptotic theory of random matrices), they allow for qualitative comparisons across distributions.
The analysis offers some details on the minimal number of samples (the number of columns $\ell$ of the random matrix $\mat\Omega$) and the error in the resulting low-rank approximation. We illustrate our analysis in the context of
the randomized subspace iteration method as a representative algorithm for low-rank approximation, however, all the results are broadly applicable to other low-rank approximation techniques. {We conclude our discussion with numerical examples using both synthetic and real-world test matrices.}
\end{abstract}

\section{Introduction}
\label{sec:intro}

Randomized methods have been providing a powerful framework for scientific computations. They are easy to implement, computationally efficient, numerically robust, highly scalable, and very reliable in a vast number of practical applications, making them a game-changing tool in scientific computing and data analysis. Randomized algorithms have already left their footprint in numerical linear algebra (NLA) (see, e.g., the survey articles~\cite{halko2011finding,mahoney2011randomized,woodruff2014sketching,martinsson2020randomized,murray2023randomized}). In this paper, we focus our attention {on} randomized algorithms for low-rank approximations. Given an $m \times n$ matrix $\mat A$ the task is to find an $m \times k$ matrix $\mat B$ and a $k \times n$ matrix $\mat  C$ such that
\[
\mat A \approx \mat B \mat C.
\]

There are several algorithms to accomplish this such as randomized subspace iteration, Nystr\"om, and block Krylov-based methods. However, the current state-of-the-art in these approaches is to focus the analysis primarily on standard Gaussian random matrices or other special cases. There are several reasons for this. The analysis for Gaussian random matrices uses well-known results from random matrix theory where the estimates are sharp and all the constants are fully specified. This makes it possible to derive quantitative error bounds as well as practical recommendations for the selection of {parameters}  (e.g., oversampling parameters). From a computational perspective, generating Gaussian random matrices has a fixed cost for each entry of the random matrix. However, for very large matrices generating and storing Gaussian random matrices can be prohibitively expensive.

This computational issue is particularly relevant for randomized low-rank tensor decompositions. To illustrate this, consider an order $d$ tensor of size $n\times \dots \times n$. A prototypical algorithm --- the randomized higher-order singular value decomposition --- applies the randomized SVD to $d$ different mode-unfoldings each of size $n \times n^{d-1}$. {It requires generating}  random matrices of size $n^{d-1} \times r$, where $r$ is the target rank, which can be infeasible even for modest values of $n$ and $d$. In such situations, current research is exploring random matrices that are structured such as subsampled trigonometric transforms, {e.g., subsampled randomized Hadamard transform (SRHT)}~\cite{tropp2011improved}, random tensor products~\cite{che2019randomized,minster2022parallel}, and random tensor-train~\cite{al2023randomized}, to name just a few. In these examples, the random matrices are not constructed explicitly, so the cost of forming and storing the matrices is much cheaper, and additionally in some circumstances {the application of the matrix} can be performed efficiently.

Therefore, there is a clear need for exploring non-Gaussian random matrices. An ideal random matrix is one that should be easy to construct, store, obtain the sketch, and yields comparable errors (in theory and practice) to the Gaussian case. There are several gaps in the analysis of low-rank approximations for non-Gaussian random matrices. First, in previous work, the analysis typically handles specific random matrices and not classes of random matrices. Second, for many random matrices, there is no analysis at all, but numerical evidence suggests behavior similar to Gaussian random matrices. Third, the lack of a unified perspective makes it difficult to compare different random matrices on the same footing. Finally, the lack of understanding of the necessary and sufficient conditions of a random matrix makes it difficult to construct new random matrices with favorable properties. With recent advances in non-asymptotic random matrix theory~\cite{edelman2005random,vershynin2010introduction,tao2012topics,vershynin2018high,tropp2015introduction}, we develop analysis for randomized algorithms that use non-Gaussian random matrices.

To address these gaps, we provide analysis and numerical experiments for several families of random matrices {$\mat{\Omega} \in \R^{n \times \ell}$}: (i) independent sub-Gaussian entries, (ii) independent sub-Gaussian columns, (iii) independent bounded columns, {(iv) independent columns with bounded second moment}.  Our analysis provides a way forward for systematic comparison and tools for analyzing new random matrices that fit within one of the four classes of random matrices. The focus of this paper is on the analysis and \textit{not} on the computational issues such as the cost of constructing, storing, and computing with the random matrices. We summarize the main contributions and features of our work. 

\paragraph{Contributions and Outline}
In this paper, we expand the analysis of randomized low-rank approximations to account for many distributions. Our goal is to answer the question: What are the characteristics of a good random matrix distribution for use in randomized low-rank approximations? The specific contributions and features of our work are as follows:
\begin{enumerate}
\item We give a novel interpretation of the error in the low-rank approximation (Section~\ref{ssec:interp}) using sample covariance approximations that provide insight into the requirements of a random matrix.
    \item We give sufficient conditions for four classes of random matrices: (1) independent sub-Gaussian entries (\cref{thm:subgauss}), (2) independent sub-Gaussian columns (\cref{thm:subgauss2}), (3) independent bounded columns (\cref{thm:genindep}), and (4) independent columns with bounded second moment (\cref{thm:indepinexp}). 
    \item We illustrate our analysis for the randomized subspace iteration which is a representative algorithm for low-rank approximation, but our analysis is more broadly applicable to other algorithms such as Nystr\"om, block-Krylov, eigenvalue approximations, etc. 
    \item {The two metrics that will be used for comparison purposes} are the minimal number of samples required (i.e., the number $\ell$ of columns of the random matrix $\mathbf{\Omega}$) and the error in the resulting low-rank approximations.

    \item Our analysis has an emphasis on developing bounds that do not explicitly depend on the size of the matrix (by the use of the concept of stable rank) which makes it applicable to large-scale problems. Our bounds involve unspecified absolute constants (since they are based on results for which the constants are not optimal or unavailable). These bounds still allow for a qualitative comparison between different random matrices. 
    \item The novel results provide analytical justification for many distributions of random matrices. Specifically, they unify several existing results (for individual distributions) and greatly expand the types of distributions that can be used in low-rank computations.   
    \item We present numerical experiments (Section~\ref{sec:num}) on several test matrices and for several distributions of random matrices, many of which are unexplored in the context of low-rank approximations.
\end{enumerate}
{We point out that
the main results are clearly presented and discussed in Sections~\ref{sec:intro}--\ref{sec:main}, while for enhanced readability all proofs are deferred till Section~\ref{sec:proofs}}.

\paragraph{Previous work and Applications of presented results} 
Our analysis is not only relevant for the randomized subspace iteration but can be applied more broadly to a host of algorithms for low-rank approximation such as Nystr\"om, subspace iteration, block-Krylov, single view, and streaming algorithms. Beyond low-rank approximation, presented results can be used in the error analysis of Hermitian and generalized Hermitian eigenvalue problems. We briefly list the random matrices which are covered by our analysis and those which are not. Our first result,~\cref{thm:subgauss},  applies to many random matrices including Gaussian, Rademacher, and uniform distribution. Of these, analysis for Gaussian random matrices is most prevalent~\cite{halko2011finding,martinsson2020randomized}, followed by Rademacher~\cite{saibaba2017randomized}.  Our second result,~\cref{thm:subgauss2},  is relevant to distributions such as the column spherical distribution. We believe (to the best of our knowledge) that the analysis for these distributions is currently unavailable. Our third result,~\cref{thm:genindep}, is applicable to coordinate distributions, including leverage score sampling, and random frames. The analysis for these distributions is prevalent (see, e.g.,~\cite[Section 9.6.3]{martinsson2020randomized}) but is limited to specific distributions, whereas our analysis is more general. To our knowledge, the final main result \cref{thm:indepinexp}, has not been considered in the literature and greatly expands the types of distributions for which the analysis can be used such as log-concave, $\alpha-$sub-exponential, which in turn cover many distributions. A related model was analyzed in~\cite{oymak2018universality}.

\paragraph{Random matrices not covered} Our analysis, while extensive, is not suitable for many random matrices, as we now explain. All presented results require at the very least finite second moments, which excludes distributions such as the Cauchy distribution. 
Furthermore, our analysis does not cover random matrices with independent rows, even if they are sub-Gaussian. The main difficulty {in this case comes from the fact that if} the random matrix $\mat\Omega$ has independent rows, we need to analyze the singular values of matrix $\mat{W}^\top\mat\Omega$ (where $\mat{W}$ has orthonormal columns), which does not have independent rows. Examples of these distributions include (1) the sparse random sign matrix, whose rows have independent random signs in a selected number of rows~\cite{martinsson2020randomized,cohen2016nearly,tropp2019streaming}; (2) random matrix with spherically distributed rows, which has been considered in the diagonal estimation literature but thus far has not been considered for low-rank approximation purposes. A potential way to tackle this class of random matrices is to use the decoupling technique in~\cite[Section 5.5]{vershynin2010introduction} and~\cite[Chapter 6.1]{vershynin2018high}{, but that is not the focus of this paper}.  Other special cases of great importance not covered by our analysis are the subsampled trigonometric transforms (SRTTs) (see e.g.,~\cite{tropp2011improved,halko2011finding,martinsson2020randomized} and references within) whose analysis bears some resemblance to the coordinate/random frame distributions and the {\sf CountSketch} matrix~\cite{charikar2004finding,clarkson2017low} which have neither independent rows nor columns. 

\section{Background}
\label{sec:background}
In this section, we collect the necessary background information on sub-Gaussian random variables and vectors (Section~\ref{ssec:subgauss}),  and randomized algorithms for low-rank approximation (Sections~\ref{ssec:randsvd} and~\ref{ssec:setup}). {A word on constants:} In the discussion below, we denote by $c, C, C', C''$, etc. absolute constants. These constants do not depend on the spectrum of the matrix or the distribution of the random variables. We cannot specify these constants explicitly since our analysis depends on {non-asymptotic matrix theory} results for which these constants are either not specified or sometimes unknown. However, in the following, we do our best to track the dependence on all constants {involved}.

\subsection{Sub-Gaussian distributions}\label{ssec:subgauss}

In this section, we review the definitions of a sub-Gaussian distribution and the various other distributions that fit into the family of sub-Gaussian distributions. This discussion is closely tied to~\cite{vershynin2010introduction,vershynin2018high}. The sub-Gaussian distribution is a rich class of probability distributions whose tails decay as sharply as Gaussian random variables. Besides Gaussian random variables, it contains distributions such as:
\begin{enumerate}
    \item \textbf{Rademacher}: The random variable takes the value $\pm 1$ with probability $1/2$ each. This is a centralized (symmetric) version of the Bernoulli random variable.
    \item \textbf{Sparse Rademacher}: This is a distribution parameterized by a sparsity parameter $s \geq 1$. The random variables take the values $\displaystyle \{-\sqrt{s},0,\sqrt{s}\}$ with probabilities respectively $\displaystyle \{1/(2s), 1-1/s, 1/(2s)\}$. Note that $s = 1$ gives the Rademacher distribution.
    \item \textbf{Uniform}: The random variable is uniform in the interval $\displaystyle [-\sqrt{3},\sqrt{3}]$, with mean zero and unit variance (denoted $\mc{U}([-\sqrt{3},\sqrt{3}])$).
    \item \textbf{Bounded}: The random variable $X$ is such that $|X| \leq M$ almost surely, where $M$ is finite. It generalizes the earlier listed random variables.  
\end{enumerate}

The following collects various definitions involving sub-Gaussian random variables; see~\cite[Chapters 2 and 3]{vershynin2018high}.
We say that a random variable $X$ is {\em sub-Gaussian} if $\prob{|X| \geq t} \leq 2\exp(-t^2/\sigma^2)$ for all $t\geq 0$ and finite $\sigma$. There are other ways of defining the sub-Gaussian random variables using the equivalence in~\cite[Proposition 2.5.2]{vershynin2018high}.  The sub-Gaussian norm of the {random variable} is defined as  \begin{equation}\label{eqn:subgaussnorm}\|X\|_{\psi_2} := \mathsf{inf}\{t > 0 \ | \ \expect{\exp(X^2/t^2)} \leq 2 \}.\end{equation}

For the distributions listed above, the sub-Gaussian norms are respectively $\mc{O}(1)$ (Rademacher), $\mc{O}(\sqrt{s})$ (sparse Rademacher), and $\mc{O}(M)$ (bounded random variables $|X| \leq M$ almost surely). Although it is natural to consider random matrices with independent sub-Gaussian entries (see \cref{def:randommat1}), in some cases, the entries of the random matrix may not be independent. 

To this end, we first define random sub-Gaussian vectors. A random vector $\vec{x} \in \R^n$ is said to be sub-Gaussian if  $\vec{x}^\top \vec{z}$ is {\em sub-Gaussian} for all $\vec{z} \in \R^n$. Hence, the sub-Gaussian norm of $\vec{x}$ is defined as $\|\vec{x}\|_{\psi_2} : = \sup_{\vec{z} \in \mc{S}^{n-1}} \| \vec{x}^\top \vec{z}\|_{\psi_2}$, where $\mc{S}^{n-1} := \{ \mathbf{x} \in \R^n : \|\mathbf{x}\|_2 = 1\}$ is the unit Euclidean sphere in $\R^n$.
A random vector $\vec{x} \in \R^n$ is said to be {\em isotropic} if $\expect{\vec{x}} = \vec{0}$ and $\expect{\vec{xx}^\top} = \mat{I}$; alternatively, it is isotropic if $\expect{\inner{\vec{x}}{\vec{z}}^2} = \inner{\vec{z}}{\vec{z}}^2$ for every $\vec{z} \in \R^n$.

 The following random vectors are examples that we will consider in this work (see~\cite[Chapter 3]{vershynin2018high} for more discussion):
\begin{enumerate}
    \item \textbf{Spherical}: A {random vector $\vec{x}$} which is uniformly distributed on the Euclidean sphere of radius $\sqrt{n}$ and centered at the origin, i.e., $\vec{x} \sim \mbox{Unif}(\sqrt{n} \mc{S}^{n-1})$. To generate a random vector from this distribution, take a random Gaussian vector $\vec{g} \sim \mc{N}(\vec{0},\mat{I})$ and compute $\vec{x}  =\sqrt{n} \vec{g}/\|\vec{g}\|_2$. Then $\vec{x} \sim \text{Unif}(\sqrt{n} \mc{S}^{n-1})$ and $\|\vec{x}\|_{\psi_2} \leq C$, where $C$ is an absolute constant; see~\cite[Theorem 3.4.6]{vershynin2018high}. 
    \item \textbf{Random orthogonal}: The vectors are selected uniformly from the set $\{ \sqrt{n}\vec{q}_i \}_{i=1}^n$, where $\vec{q}_i$ are the columns of an orthogonal matrix. This is a special case of the random frame distribution, in which the vectors form a tight frame with equal norms. Another special case is the coordinate distribution where $\{\vec{q}_i\}_{i=1}^n$ are the columns of the identity matrix. 
    
\end{enumerate}
{We will also consider random matrices with independent sub-Gaussian columns (see~\cref{def:randommat2}), independent columns with bounded norms (see~\cref{def:randommat3})
and independent columns with bounded second moments (see~\cref{def:randommat4})}.

\subsection{Randomized SVD}\label{ssec:randsvd}

Let us briefly describe how to determine a low-rank approximation of the matrix $\mathbf{A} \in \R^{m \times n}, m \geq n$, using a randomized method, i.e., randomized subspace iteration algorithm.

Given a random matrix $\mat{\Omega} \in \R^{n \times \ell}$, we carry out $q$ steps of the randomized subspace iteration method to obtain the matrix $\mat{Y}$, also known as the \textit{sketch} of the input matrix $\mat{A}$. Then, we perform a thin-$QR$ factorization of sketch $\mat{Y} \in \R^{m \times \ell}$ to obtain $\mat{Q} \in \R^{m \times \ell}$
whose columns form an orthonormal basis for the range of $\mat{Y}$. The main idea is that, under suitable conditions~\cite{halko2011finding}, the range of $\mat{Q}$ is a good approximation for the range of $\mat{A}$. Finally, we obtain a low-rank approximation of  $\mat{A}$ by the projection $\mat{QQ}^T\mat{A}$. The rest of the algorithm involves converting this low-rank approximation into the SVD format.
 The pseudocode of this basic algorithm is given in \Cref{alg:randsvd}. We call~\Cref{alg:randsvd}
\textit{idealized}, since it can behave poorly in the presence of round-off errors. For details on a more practical implementation of this algorithm, we refer the reader to~\cite{halko2011finding,saad2011numerical}.
\begin{algorithm}
\begin{algorithmic}[1] 
    \REQUIRE Matrix $\mat{A} \in \R^{m\times n}$ with $m \geq n$,
    random matrix $\mathbf{\Omega} \in \R^{n \times \ell}$, $\#$ of subspace iteration steps $q \geq 0$ (integer).
    \STATE Compute $\mat{Y} = (\mat{AA}^\top)^q \mat{A\Omega}$
    \STATE Compute $\mat{Y} = \mat{QR}$ \qquad \quad \texttt{/* thin QR factorization */}
    \STATE Compute $\mat{B} = \mat{Q}^\top \mat{A}$  \qquad \ \texttt{/* small matrix */}\\[0.02in]
    \STATE Compute $\mat{B} = \mat{U}_B\mathat{\Sigma}\mathat{V}^\top$ \ \ \texttt{/* thin SVD factorization */}\\[0.02in]
    \STATE Set $\mathat{U} = \mat{QU}_B$
    \ENSURE $\mathat{U}, \mathat{\Sigma}, \mathat{V}$ such that $\mathbf{A} \approx \mat{Q}\mat{Q}^T\mat{A} = \mathat{U}\mathat{\Sigma}\mathat{V}^T$.
\end{algorithmic}
\caption{Randomized SVD (Idealized version)} 
\label{alg:randsvd}
\end{algorithm}

\subsection{Setup for the analysis}\label{ssec:setup}
Let $\mat{A} \in \R^{m\times n}$ with $m \geq n$ has a singular value decomposition (SVD) of the form 
\[ \mat{A} = \mat{U\Sigma V}^\top = \bmat{\mat{U}_k & \mat{U}_\perp } \bmat{ \mat\Sigma_k \\ & \mat\Sigma_\perp } \bmat{\mat{V}_k^\top \\ \mat{V}_\perp^\top},\] 
where $1\leq k \leq \rank(\mat{A})$.
Here, $\mat{\Sigma}_k \in \R^{k \times k}$ and $\mat{\Sigma}_\perp \in \R^{(m-k) \times (n-k)}$, the columns of $\mat{U}_k$ and $\mat{U}_\perp$ are the corresponding left singular vectors, and columns of $\mat{V}_k$ and $\mat{V}_\perp$ are the corresponding right singular vectors.
The singular values $\sigma_i(\mathbf{A}), 1\leq i \leq n$, of $\mat{A}$ are arranged in decreasing order such that the diagonal elements of matrix $\mat\Sigma_k$ are the dominant singular values of the matrix $\mat{A}$. We assume that there is a gap in the singular values at index $k$, i.e., 
\[
\sigma_1(\mat{A}) \; \geq \;\sigma_2(\mat{A}) \; \geq \cdots \; \geq
\sigma_k(\mat{A}) \; > \; \sigma_{k+1} (\mat{A}) \; \geq \cdots \; \geq \; \sigma_n(\mat{A}) \geq 0,
\]
and we define the singular value ratios 
\begin{equation}
    \gamma_j := \frac{\sigma_{k+1}(\mat{A})}{\sigma_j(\mat{A})},  \qquad 1 \leq j \leq k.
\end{equation}
With the singular values monotonically decreasing, the singular value ratios satisfy $\gamma_1 \leq \dots \leq \gamma_k < 1$. Next we define the \textit{stable rank} of a nonzero matrix $\mat{A}$ as  $\sr(\mat{A}) := {\| \mat{A} \|_F^2}/{\|\mat{A}\|_2^2 }$.
The stable rank is a proxy for the rank of the matrix and it satisfies the following inequality $1 \leq \sr(\mat{A}) \leq \rank(\mat{A})$.

We partition the matrix $\mat{V}\t \mat\Omega \in \R^{n \times \ell}$ into two parts, i.e.,
\begin{equation}
    \Oh_1 := \mat{V}_k^\top \mat\Omega \in \R^{k \times \ell} \qquad \mbox{ and } \qquad \Oh_2 :=  \mat{V}_\perp^\top \mat\Omega \in \R^{(n-k) \times \ell},
\end{equation}
where numbers $k$ and $\ell$ satisfy 
$ k \leq \ell \leq n$.
{We consider a bound for the structural analysis of the subspace iteration method.} The following result quantifies the error in the low-rank approximation.
\begin{theorem}[Structural bound]
\label{thm:StrBound}
Let $\mat{A} \in \R^{m\times n}$ with $m \geq n$ and $\mathbf{Q} \in \R^{m \times \ell}$ be the output of~\Cref{alg:randsvd} with the number of subspace iteration steps $q$. Then
    \[ \|(\mat{I} - \mat{QQ}^\top) \mat{A}\|_2^2 = \| \mat{A} - \mathat{U}\mathat{\Sigma}\mathat{V}\t\|_2^2  \leq \|\mat\Sigma_\perp\|_2^2 + \gamma_k^{4q}\|\mat\Sigma_\perp\Oh_2\Oh_1^\dagger\|_2^2. \]
\end{theorem}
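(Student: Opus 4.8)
The plan is to reduce the stated inequality to a deterministic estimate for an explicit matrix sitting in the column space of the sketch $\mat Y$, and then to bound the residual of the oblique projector attached to that matrix. The equality on the left is immediate: from steps~3--5 of~\Cref{alg:randsvd} one has $\mathat{U}\mathat{\Sigma}\mathat{V}\t=\mat Q\mat U_B\mathat{\Sigma}\mathat{V}\t=\mat Q\mat B=\mat Q\mat Q\t\mat A$, so $\mat A-\mathat{U}\mathat{\Sigma}\mathat{V}\t=(\mat I-\mat Q\mat Q\t)\mat A$ and it suffices to bound $\|(\mat I-\mat Q\mat Q\t)\mat A\|_2$.

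Next I would pass to the SVD coordinates of $\mat A$. Since $(\mat A\mat A\t)^q\mat A=\mat U\mat\Sigma^{2q+1}\mat V\t$ (with the usual convention for the nonsquare ``diagonal'' matrix $\mat\Sigma^{2q+1}$) and $\mat V_k\t\mat\Omega=\Oh_1$, $\mat V_\perp\t\mat\Omega=\Oh_2$, the sketch decomposes as
\[
\mat Y=\mat U_k\mat\Sigma_k^{2q+1}\Oh_1+\mat U_\perp\mat\Sigma_\perp^{2q+1}\Oh_2 .
\]
Assuming $\Oh_1\in\R^{k\times\ell}$ has full row rank (so that $\Oh_1\Oh_1^\dagger=\mat I_k$) and using that the gap $\sigma_k>\sigma_{k+1}\ge0$ makes $\mat\Sigma_k$ invertible, I define the \emph{witness matrix}
\[
\mat W:=\mat Y\,\Oh_1^\dagger\mat\Sigma_k^{-(2q+1)}=\mat U_k+\mat U_\perp\mat F,\qquad \mat F:=\mat\Sigma_\perp^{2q+1}\Oh_2\Oh_1^\dagger\mat\Sigma_k^{-(2q+1)},
\]
whose columns lie in $\range(\mat Y)=\range(\mat Q)$.

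Because $\range(\mat W)\subseteq\range(\mat Q)$, the orthogonal projector $\mat P:=\mat W(\mat W\t\mat W)^{-1}\mat W\t$ satisfies $(\mat I-\mat Q\mat Q\t)(\mat I-\mat P)=\mat I-\mat Q\mat Q\t$, hence $\|(\mat I-\mat Q\mat Q\t)\mat A\|_2\le\|(\mat I-\mat P)\mat A\|_2$. I then write $\|(\mat I-\mat P)\mat A\|_2^2=\|(\mat I-\mat P)\mat A\mat A\t(\mat I-\mat P)\|_2$, split $\mat A\mat A\t=\mat U_k\mat\Sigma_k^2\mat U_k\t+\mat U_\perp(\mat\Sigma_\perp\mat\Sigma_\perp\t)\mat U_\perp\t$, and apply the triangle inequality to the two positive semidefinite summands. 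The $\mat U_\perp$ term is at most $\|(\mat I-\mat P)\mat U_\perp\|_2^2\,\|\mat\Sigma_\perp\|_2^2\le\|\mat\Sigma_\perp\|_2^2$. For the $\mat U_k$ term, the identity $\mat U_k=\mat W-\mat U_\perp\mat F$ together with $\mat P\mat W=\mat W$ gives $(\mat I-\mat P)\mat U_k=-(\mat I-\mat P)\mat U_\perp\mat F$, whence
\[
\|(\mat I-\mat P)\mat U_k\mat\Sigma_k\|_2^2\le\|\mat F\mat\Sigma_k\|_2^2=\|\mat\Sigma_\perp^{2q+1}\Oh_2\Oh_1^\dagger\mat\Sigma_k^{-2q}\|_2^2,
\]
and submultiplicativity with $\|\mat\Sigma_\perp^{2q}\|_2=\sigma_{k+1}^{2q}$, $\|\mat\Sigma_k^{-2q}\|_2=\sigma_k^{-2q}$ bounds the right-hand side by $\gamma_k^{4q}\|\mat\Sigma_\perp\Oh_2\Oh_1^\dagger\|_2^2$. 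Adding the two contributions gives the claim.

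The step I expect to be the main obstacle is the degenerate case where $\Oh_1$ fails to have full row rank: then $\Oh_1\Oh_1^\dagger$ is only a sub-identity projector and the witness construction breaks down, so one must either restrict to the generic case (for the continuous distributions studied later this holds almost surely, and one really only needs $\rank(\Oh_1)=k$) or recover the general statement by a perturbation/limiting argument on $\mat\Omega$. The remaining care is clerical: keeping the nonsquare trailing block $\mat\Sigma_\perp^{2q+1}$ bookkept consistently, and verifying that each triangle-inequality and monotonicity step is applied to a genuine spectral norm of a positive semidefinite matrix.
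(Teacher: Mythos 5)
Your proof is correct and follows essentially the same route as the source the paper relies on: the paper does not prove \cref{thm:StrBound} itself but cites \cite[Theorem 8]{saibaba2017randomized}, whose argument is exactly the witness-matrix/projector comparison you reconstruct (build $\mat Y\Oh_1^\dagger\mat\Sigma_k^{-(2q+1)}=\mat U_k+\mat U_\perp\mat F$ inside $\range(\mat Q)$, compare with its orthogonal projector, and split $\mat A\mat A\t$ into the head and tail blocks). Your caveat about $\Oh_1$ needing full row rank is well taken — that assumption is implicit in the cited result and holds almost surely for the continuous distributions treated later, and restricting to that case (rather than a limiting argument, which cannot rescue e.g.\ $\Oh_1=\mat 0$, where the bound genuinely fails) is what the reference does.
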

\begin{proof}
    See~\cite[Theorem 8]{saibaba2017randomized}.
\end{proof}
The interpretation of this theorem is that the error in the low-rank approximation is governed by the tail of the singular values in matrix $\mat\Sigma_\perp$. The second term is small if the gap $1/\gamma_k$ is large or the number of subspace iterations steps $q$ is large. The influence of the random matrix (also starting guess) is captured in the term $\|\mat\Sigma_\perp\Oh_2\Oh_1^\dagger\|_2$. In Section~\ref{ssec:interp}, we provide a novel interpretation of this term that is useful in the upcoming analysis.

Terms related to $\|\mat\Sigma_\perp\Oh_2\Oh_1^\dagger\|_2$ repeatedly appear in the analysis of randomized low-rank approximations: randomized SVD low-rank approximation (\cite[Theorem 9.1]{halko2011finding}), accuracy of singular values and singular vectors~\cite[Theorems 1 and 2]{saibaba2017randomized}, Nystr\"om~\cite[Proof of Theorem 2.1]{tropp2019streaming}, block Krylov subspace methods (\cite[Theorem 2.1]{drineas2018structural}). Therefore, by analyzing this term for various distributions, we can derive results for low-rank approximation for other algorithms. However, in this paper, we focus on the randomized subspace iteration as a representative algorithm.

The following provides a bound for the term $\|\mat\Sigma_\perp\Oh_2\Oh_1^\dagger\|_2$ when the random matrix $\mat\Omega \in \R^{n \times \ell}$ is a standard Gaussian random matrix. We assume throughout $\mat\Sigma_\perp \neq \mat{0}$.  

\begin{lemma}\label{lem:gauss}
Let $\mat{\Omega} \in \R^{n \times \ell}$ be a standard Gaussian matrix, such that $\ell := k + p \leq n$ with an oversampling parameter $p\geq 4$.  Then for all \ $0 < \delta < 1$, with probability of failure at most $\delta$, we have 
\[
\|\mat{\Sigma}_\perp\Oh_2\Oh_1^\dagger\|_2 \leq   \|\mat\Sigma_\perp\|_2 \left( \frac{4}{\delta}\right)^{1/p}\left[\sqrt{\frac{3k}{p+1 }}   + \frac{e\sqrt{\ell}}{p+1} \left( {\Delta} +   \sqrt{\sr(\mat\Sigma_\perp)} \right)\right],
\]
where  ${\Delta} := \sqrt{2\log\left(\frac{2}{\delta}\right)}$.
\end{lemma}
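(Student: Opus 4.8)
The plan is to control $\|\mat\Sigma_\perp\Oh_2\Oh_1^\dagger\|_2$ by splitting the analysis into the behavior of $\Oh_2$ (an $(n-k)\times\ell$ Gaussian block) and the pseudoinverse $\Oh_1^\dagger$ (where $\Oh_1$ is a $k\times\ell$ Gaussian block). The two blocks $\Oh_1$ and $\Oh_2$ are independent because $\mat V_k$ and $\mat V_\perp$ have orthonormal columns spanning complementary subspaces and $\mat\Omega$ is Gaussian (rotation invariance of the Gaussian ensures that $\mat V\t\mat\Omega$ is again standard Gaussian with independent rows). So I would first condition on $\Oh_1$, write
\[
\|\mat\Sigma_\perp\Oh_2\Oh_1^\dagger\|_2 \;\le\; \|\mat\Sigma_\perp\Oh_2\Oh_1^\dagger\|_2,
\]
and bound the operator norm of $\Oh_1^\dagger$ by $1/\sigma_{\min}(\Oh_1)=1/\sigma_\ell(\Oh_1)$, the smallest singular value of the fat $k\times\ell$ Gaussian matrix. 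The standard tail bound for the smallest singular value of a Gaussian matrix (Gordon's theorem / Davidson–Szarek, see e.g.\ \cite[Chapter 7]{vershynin2018high} or the treatment in \cite{halko2011finding}) gives $\sigma_\ell(\Oh_1) \gtrsim \sqrt{\ell}-\sqrt{k}$ with high probability, or more precisely a negative-moment bound $\expect{\sigma_\ell(\Oh_1)^{-2}}\le 3k/(p+1)$ type estimate as used in \cite[Proposition 10.2]{halko2011finding}; the factor $\sqrt{3k/(p+1)}$ in the claimed bound, together with the $(4/\delta)^{1/p}$ prefactor, is exactly the signature of using such an expected-inverse-moment bound combined with Markov's inequality and a union over the two failure events.

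Next I would handle $\|\mat\Sigma_\perp\Oh_2\|_2$ (or rather $\|\mat\Sigma_\perp\Oh_2\Oh_1^\dagger\|_2$ after the split) by a concentration argument for the Gaussian matrix $\Oh_2$. The quantity $\|\mat\Sigma_\perp\Oh_2\|_2$ is the operator norm of a matrix with independent Gaussian columns whose covariance is $\mat\Sigma_\perp\mat\Sigma_\perp\t$. The sharp estimate here is of the form $\expect{\|\mat\Sigma_\perp\Oh_2\|_2} \le \|\mat\Sigma_\perp\|_F + \|\mat\Sigma_\perp\|_2\sqrt{\ell}$, i.e.\ $\|\mat\Sigma_\perp\|_2(\sqrt{\sr(\mat\Sigma_\perp)}+\sqrt{\ell})$, and concentration of the spectral norm as a $1$-Lipschitz function of the Gaussian entries (Gaussian concentration of measure, \cite[Theorem 5.2.2]{vershynin2018high}) adds the deviation term $\|\mat\Sigma_\perp\|_2\sqrt{2\log(2/\delta)} = \|\mat\Sigma_\perp\|_2\Delta$. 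Assembling: on the intersection of the two good events (whose total failure probability is at most $\delta$ after choosing the split appropriately, say $\delta/2$ and $\delta/2$, or more cleverly to produce the $(4/\delta)^{1/p}$ factor), we get
\[
\|\mat\Sigma_\perp\Oh_2\Oh_1^\dagger\|_2 \;\le\; \|\mat\Sigma_\perp\Oh_2\|_2 \cdot \|\Oh_1^\dagger\|_2,
\]
and substituting the two bounds and collecting the $e\sqrt\ell/(p+1)$ and $\sqrt{3k/(p+1)}$ factors yields the stated inequality. The term $\sqrt{3k/(p+1)}$ presumably comes from the $\mat V_k$-aligned part being treated separately (the "deterministic-rank-$k$" contribution), and the $e\sqrt\ell/(p+1)(\Delta+\sqrt{\sr(\mat\Sigma_\perp)})$ term from the tail block, so I would actually organize the proof as: (a) bound $\|\Oh_1^\dagger\|_2$; (b) split $\mat\Sigma_\perp\Oh_2\Oh_1^\dagger$ using the triangle inequality into a piece bounded by $\|\mat\Sigma_\perp\|_2\sqrt{k}\,\|\Oh_1^\dagger\|_2$ and a piece involving $\|\mat\Sigma_\perp\Oh_2\|_2\|\Oh_1^\dagger\|_2$ with the Frobenius-norm/concentration bound; (c) union bound; (d) tune the failure probabilities to get the clean $(4/\delta)^{1/p}$ and $1/(p+1)$ shapes.

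The main obstacle I anticipate is getting the constants and the exponent $1/p$ exactly right — in particular routing the two independent failure events so that the combined tail produces the prefactor $(4/\delta)^{1/p}$ rather than a crude $(2/\delta)$ or $(2/\delta)^2$. This requires using the $p$-th negative moment of $\sigma_\ell(\Oh_1)$ (valid for $p\ge 4$, which is exactly why the hypothesis $p\ge 4$ appears) via $\prob{\sigma_\ell(\Oh_1)^{-1}\ge t}\le t^{-p}\expect{\sigma_\ell(\Oh_1)^{-p}}$ and a careful bound $\expect{\sigma_\ell(\Oh_1)^{-p}}\le (\text{const}\cdot k/(p+1)^p)\cdot\Gamma$-factor, the kind of computation done in \cite{halko2011finding} and refined in \cite{saibaba2017randomized}; this is the delicate, calculation-heavy step, whereas the concentration bound on $\|\mat\Sigma_\perp\Oh_2\|_2$ is routine once the expectation bound via Slepian/Gordon comparison is invoked.
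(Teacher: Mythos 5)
There is a genuine gap in the central step. The paper's proof simply invokes the intermediate inequality from the proof of Theorem 10.8 in Halko--Martinsson--Tropp and then sets $2t^{-p}=\delta/2$, $e^{-u^2/2}=\delta/2$, so the real content lies in how that inequality is produced --- and that is exactly where your plan deviates in a way that cannot recover the stated bound. You propose the submultiplicative split $\|\mat\Sigma_\perp\Oh_2\Oh_1^\dagger\|_2\le\|\mat\Sigma_\perp\Oh_2\|_2\,\|\Oh_1^\dagger\|_2$ and then bound the two factors independently. Carrying that out gives roughly
\[
\|\mat\Sigma_\perp\|_2\left(\sqrt{\ell}+\Delta+\sqrt{\sr(\mat\Sigma_\perp)}\right)\cdot\frac{e\sqrt{\ell}}{p+1}\,t ,
\]
whose leading term is of order $\frac{e\,\ell}{p+1}\,\|\mat\Sigma_\perp\|_2\,t$. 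The lemma instead has $\sqrt{3k/(p+1)}$ in that slot, which is smaller by roughly a factor $\sqrt{k/p}$ when $k\gg p$; so the decoupled argument proves a strictly weaker inequality, not the one claimed. The correct mechanism (used in HMT and inherited by the paper) is to condition on $\Oh_1$ and apply the Gaussian Chevet-type bound to the \emph{whole} product with both outer factors fixed: $\expect{\|\mat\Sigma_\perp\Oh_2\Oh_1^\dagger\|_2\mid\Oh_1}\le\|\mat\Sigma_\perp\|_2\|\Oh_1^\dagger\|_F+\|\mat\Sigma_\perp\|_F\|\Oh_1^\dagger\|_2$, add the Lipschitz-concentration deviation $u\,\|\mat\Sigma_\perp\|_2\|\Oh_1^\dagger\|_2$, and only then control $\|\Oh_1^\dagger\|_F$ and $\|\Oh_1^\dagger\|_2$ separately via their tail bounds ($\prob{\|\Oh_1^\dagger\|_F\ge\sqrt{3k/(p+1)}\,t}\le t^{-p}$ for $p\ge4$, and $\prob{\|\Oh_1^\dagger\|_2\ge\tfrac{e\sqrt{k+p}}{p+1}\,t}\le t^{-(p+1)}$). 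The pairing of $\|\mat\Sigma_\perp\|_2$ with the \emph{Frobenius} norm of $\Oh_1^\dagger$ is what produces $\sqrt{3k/(p+1)}$, and it is irretrievably lost once you peel off $\|\Oh_1^\dagger\|_2$ by submultiplicativity. Your fallback in step (b), bounding a piece by $\|\mat\Sigma_\perp\|_2\sqrt{k}\,\|\Oh_1^\dagger\|_2$, has the same defect ($\sqrt{k}\cdot\tfrac{e\sqrt{\ell}}{p+1}\gg\sqrt{3k/(p+1)}$), and there is no triangle-inequality decomposition of $\mat\Sigma_\perp\Oh_2\Oh_1^\dagger$ that yields those two pieces.

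Two smaller points: since $\Oh_1\in\R^{k\times\ell}$ with $k\le\ell$, its relevant smallest singular value is $\sigma_k(\Oh_1)$, not $\sigma_\ell(\Oh_1)$; and your intuition about where $(4/\delta)^{1/p}$ and the hypothesis $p\ge4$ come from (Markov applied to negative moments of the pseudoinverse) is correct in spirit --- but note that it enters through \emph{both} the Frobenius-norm and spectral-norm tails of $\Oh_1^\dagger$, combined by a union bound with the single Gaussian-concentration event for $\Oh_2$, which is what gives the failure probability $2t^{-p}+e^{-u^2/2}$ that the paper then splits as $\delta/2+\delta/2$.
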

\begin{proof}
The proof follows from the one of~\cite[Theorem 10.8]{halko2011finding}. We provide the details in Section~\ref{ssec:aux}.
\end{proof}

\section{Main results}\label{sec:main}
We first present a novel interpretation of the term $\|\mat\Sigma_\perp\Oh_2\Oh_1^\dagger\|_2$ using sample covariance matrices. Next, we derive bounds for this term and the low-rank approximations for each of the four classes of random matrices that are considered in this paper. The bounds are illustrated for a few example distributions, but other examples are provided in the Supplementary Materials {(\cref{ssec:otherex})}.
\subsection{Connection to covariance estimation}\label{ssec:interp}  { We give an intuitive explanation of the sufficient conditions required of a random matrix by drawing connections to the covariance estimation problem.} Consider random vectors $\vec{x}_j$ with zero mean and covariance $\mat\Gamma$. Given independent draws $\vec{x}_1,\dots, \vec{x}_N$, define the sample covariance matrix 
\[ \mathat{\Gamma}_N := \frac{1}{N} \sum_{j=1}^N \vec{x}_j\vec{x}_j\t.\]
It is easy to verify that $\mathat{\Gamma}_N$ is an unbiased estimator for the covariance matrix $\mat\Gamma$. A natural question is to find the number of samples $N$ such that the relative error for the sample covariance matrix $\mathat{\Gamma}_N$ satisfies 
\[ \|\mathat{\Gamma}_N - \mat\Gamma\|_2 \leq \eta \|\mat\Gamma\|_2, \]
 with high probability. This question has a rich history in statistics, see~\cite[Section 5.4.3]{vershynin2010introduction} and~\cite[Chapter 5.6]{vershynin2018high}.

To see the connection with the covariance estimation problem, we consider the following lemma and, for the moment, deterministic $\mat\Omega$.  
\begin{lemma}\label{lem:samplecov}
    Suppose that $\mat{Z} = \mat{\Sigma}_\perp\Oh_2$ and
    \begin{equation}\label{eqn:covestimates}
        \|\frac{1}{\ell}\mat{ZZ}\t - \mat\Sigma_\perp\mat\Sigma_\perp\t\|_2 \leq \eta \|\mat\Sigma_\perp\|_2^2,\qquad \|\frac1\ell\Oh_1\Oh_1\t - \mat{I}\|_2 \leq \varepsilon,
    \end{equation}  
    for $\eta > 0$ and $\varepsilon\in(0,1)$. Then 
    \[ \|\mat{\Sigma}_\perp\Oh_2\Oh_1^\dagger\|_2^2 \leq \frac{ \|\frac{1}{\ell}\mat{ZZ}\t - \mat\Sigma_\perp\mat\Sigma_\perp\t\|_2 + \|\mat\Sigma_\perp\|_2^2 }{1 - \|\frac1\ell\Oh_1\Oh_1\t - \mat{I}\|_2 }   \leq \|\mat\Sigma_\perp\|_2^2 \frac{1+\eta}{1-\varepsilon}. \]
\end{lemma}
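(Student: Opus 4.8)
The plan is to reduce the whole estimate to two elementary facts: submultiplicativity of the spectral norm, and the fact that a small spectral perturbation of the identity is well conditioned.

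First I would rewrite the quantity of interest. Since $\mat{Z} = \mat{\Sigma}_\perp\Oh_2$, we have $\mat{\Sigma}_\perp\Oh_2\Oh_1^\dagger = \mat{Z}\Oh_1^\dagger$, and I would rescale by introducing $\mat{M} := \tfrac{1}{\sqrt{\ell}}\Oh_1 \in \R^{k\times\ell}$, so that $\Oh_1^\dagger = \tfrac{1}{\sqrt{\ell}}\mat{M}^\dagger$ and $\mat{Z}\Oh_1^\dagger = \tfrac{1}{\sqrt{\ell}}\mat{Z}\mat{M}^\dagger$. The second bound in~\eqref{eqn:covestimates} reads $\|\mat{M}\mat{M}^\top - \mat{I}\|_2 = \|\tfrac1\ell\Oh_1\Oh_1^\top - \mat{I}\|_2 \le \varepsilon < 1$, so writing $\mat{M}\mat{M}^\top = \mat{I} + \mat{E}$ with $\|\mat{E}\|_2 < 1$ and applying Weyl's inequality gives $\lambda_{\min}(\mat{M}\mat{M}^\top) \ge 1 - \|\tfrac1\ell\Oh_1\Oh_1^\top - \mat{I}\|_2 > 0$. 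Hence $\mat{M}$ has full row rank $k$, and its pseudoinverse satisfies $\|\mat{M}^\dagger\|_2^2 = 1/\lambda_{\min}(\mat{M}\mat{M}^\top) \le 1/(1 - \|\tfrac1\ell\Oh_1\Oh_1^\top - \mat{I}\|_2)$.

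Next I would control $\|\mat{Z}\|_2$ and assemble the pieces. Submultiplicativity of the spectral norm yields $\|\mat{Z}\Oh_1^\dagger\|_2^2 = \tfrac1\ell\|\mat{Z}\mat{M}^\dagger\|_2^2 \le \tfrac1\ell\|\mat{Z}\|_2^2\,\|\mat{M}^\dagger\|_2^2$, while the triangle inequality gives $\tfrac1\ell\|\mat{Z}\|_2^2 = \|\tfrac1\ell\mat{Z}\mat{Z}^\top\|_2 \le \|\tfrac1\ell\mat{Z}\mat{Z}^\top - \mat{\Sigma}_\perp\mat{\Sigma}_\perp^\top\|_2 + \|\mat{\Sigma}_\perp\mat{\Sigma}_\perp^\top\|_2$, where $\|\mat{\Sigma}_\perp\mat{\Sigma}_\perp^\top\|_2 = \|\mat{\Sigma}_\perp\|_2^2$. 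Combining the last two bounds with the estimate for $\|\mat{M}^\dagger\|_2^2$ gives exactly the first (middle) inequality in the statement. Finally, inserting the two hypotheses from~\eqref{eqn:covestimates} — the numerator is then at most $(1+\eta)\|\mat{\Sigma}_\perp\|_2^2$ and the denominator is at least $1-\varepsilon$ — yields the claimed bound $\|\mat{\Sigma}_\perp\Oh_2\Oh_1^\dagger\|_2^2 \le \|\mat{\Sigma}_\perp\|_2^2\,\tfrac{1+\eta}{1-\varepsilon}$.

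I do not expect a genuine obstacle here: the argument is routine deterministic linear algebra. The only point requiring care is the role of the condition $\varepsilon < 1$ (equivalently $\|\tfrac1\ell\Oh_1\Oh_1^\top - \mat{I}\|_2 < 1$), which is precisely what guarantees that $\Oh_1$ has full row rank — so that $\Oh_1\Oh_1^\dagger$ behaves like a genuine right inverse with a controlled norm — and keeps the denominator positive. One could also phrase the proof through the identity $\Oh_1^\dagger(\Oh_1^\dagger)^\top = (\Oh_1^\top\Oh_1)^\dagger$ together with the fact that $\Oh_1\Oh_1^\top$ and $\Oh_1^\top\Oh_1$ share their nonzero eigenvalues, but the rescaling-plus-submultiplicativity route above is the most direct.
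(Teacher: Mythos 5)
Your proposal is correct and follows essentially the same route as the paper's proof: submultiplicativity $\|\mat{Z}\Oh_1^\dagger\|_2 \le \|\mat{Z}\|_2\|\Oh_1^\dagger\|_2$, the triangle inequality for $\tfrac1\ell\|\mat{Z}\|_2^2 = \|\tfrac1\ell\mat{Z}\mat{Z}\t\|_2$, and the bound $\|\Oh_1^\dagger\|_2^2 = 1/\sigma_k^2(\Oh_1) \le \tfrac{1}{(1-\varepsilon)\ell}$ coming from the perturbation of the identity, with $\varepsilon<1$ ensuring full row rank. The rescaling by $\tfrac{1}{\sqrt{\ell}}$ is only cosmetic; no further changes are needed.
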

\begin{proof}
See Section~\ref{ssec:aux}.
\end{proof}
Now assume that the columns of $\mat\Omega$ are independent, zero mean, and isotropic. Therefore, by the linearity of expectations, $$\expect{\mat{ZZ}\t} = \ell \mat\Sigma_\perp\mat\Sigma_\perp\t \qquad  \expect{\Oh_1\Oh_1\t} = \ell \mat{I}.$$ The terms $\frac{1}{\ell}\mat{ZZ}\t$ and $\frac{1}{\ell}\Oh_1\Oh_1\t$ are sample covariance matrices for $\mat\Sigma_\perp\mat\Sigma_\perp\t$ and the $k\times k$ identity matrix, respectively. Then, $\|\frac{1}{\ell}\mat{ZZ}\t - \mat\Sigma_\perp\mat\Sigma_\perp\t\|_2$ \ and \ $\|\frac1\ell\Oh_1\Oh_1\t - \mat{I}\|_2$  represent the relative error in the sample covariance approximations. We want to find the minimal number of samples $\ell$ such that the two bounds in~\eqref{eqn:covestimates} hold simultaneously with high probability. As we show, for many distributions $\ell = \mc{O}(k)$ samples are sufficient, whereas for other distributions $\mc{O}(k\log k)$ samples are necessary. Furthermore, this connection to covariance estimation gives insight into why all our assumptions require the columns to be independent, zero mean,  and isotropic.

\subsection{Independent sub-Gaussian entries}\label{ssec:subgauss1}

In this section, we derive a result analogous to \cref{lem:gauss} for standard sub-Gaussian random matrices with independent entries. We define a model of the random matrix to be analyzed and use it to introduce the main result.

\begin{definition}[Independent sub-Gaussian entries] \label{def:randommat1}
In this model, we consider the random matrix $\mat\Omega \in \R^{n\times \ell}$ with the following properties:
   \begin{enumerate}
    \item Independence: the entries of $\mat{\Omega}$ are independent copies of the sub-Gaussian random variable $X$.
    \item Normalization: {the random variable} $X$ has zero mean and unit variance.
    \item Boundedness: {the random variable} $X$ satisfies {$\|X\|_{\psi_2} \leq K_{\rm E}$ with $K_E > 0$}.
\end{enumerate} 
\end{definition}

\begin{theorem}[Independent sub-Gaussian entries]\label{thm:subgauss}
Let $\mat{\Omega} \in \R^{n \times \ell}$ be a random matrix satisfying \cref{def:randommat1}. Let $\varepsilon,\delta \in (0,1)$ be user-specified parameters, and let  the number of samples satisfy
\begin{equation}
    \label{eqn:ellbound}n \geq \ell =  \frac{C_{\rm ES}K_{\rm E}^4}{\varepsilon^2} (\sqrt{k} + {V_\delta})^2  =  \mc{O}(K_{E}^4k),
\end{equation}
 where $V_\delta := \sqrt{\log(4/\delta)}$.
Then, with a probability of failure at most $\delta$, we have
\begin{equation*} \|\mat\Sigma_\perp\Oh_2\Oh_1^\dagger\|_2  \leq   \frac{C_{\rm EB}K_{\rm E}\|\mat\Sigma_\perp\|_2}{(1-\varepsilon)\sqrt{\ell}}\left( \sqrt{\sr(\mat\Sigma_\perp)} + \sqrt{k} +  {V_\delta}  \right). \end{equation*}
If  $\mat\Omega$ is an input to~\cref{alg:randsvd}, then with a probability of failure at most $\delta$, 
\[\|\mat{A} - \mathat{U}\mathat{\Sigma}\mathat{V}\t \|^2 \leq  \|\mat\Sigma_\perp\|_2^2 + \frac{C_{\rm EB}^2 K_{\rm E}^2 \gamma_k^{4q} \|\mat\Sigma_\perp\|_2^2}{(1-\varepsilon)^2{\ell}}\left( \sqrt{\sr(\mat\Sigma_\perp)} + \sqrt{k} +  {V_\delta}  \right)^2 . \]
Here, $C_{\rm ES}$ and $C_{\rm EB}$ are absolute constants.
\end{theorem}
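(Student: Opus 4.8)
The plan is to reduce everything to the two covariance-estimation inequalities in \eqref{eqn:covestimates} and then invoke \cref{lem:samplecov}. The first step is to recognize that the columns of $\mat\Omega$ are independent, isotropic, zero-mean random vectors in $\R^n$ with sub-Gaussian norm $\|\cdot\|_{\psi_2}$ bounded by an absolute multiple of $K_{\rm E}$ (isotropy and zero mean follow from the entrywise normalization in \cref{def:randommat1}, and the vector sub-Gaussian norm of a vector with independent sub-Gaussian coordinates is controlled by the maximal coordinate norm; see \cite[Lemma 3.4.2]{vershynin2018high}). Since $\mat{V}_k$ and $\mat{V}_\perp$ have orthonormal columns, the rows of $\Oh_1 = \mat{V}_k\t\mat\Omega$ are independent (as they correspond to independent columns of $\mat\Omega$), so $\Oh_1\t \in \R^{\ell\times k}$ is a matrix with independent, isotropic, sub-Gaussian rows. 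The standard non-asymptotic bound on the extreme singular values of such matrices (\cite[Theorem 4.6.1]{vershynin2018high}) gives, with failure probability at most $\delta/2$, that $\|\frac1\ell\Oh_1\Oh_1\t - \mat{I}\|_2 \le \varepsilon$ precisely when $\ell \gtrsim K_{\rm E}^4\varepsilon^{-2}(\sqrt k + V_\delta)^2$, which is the hypothesis \eqref{eqn:ellbound}.

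The second step is to control $\|\frac1\ell\mat{ZZ}\t - \mat\Sigma_\perp\mat\Sigma_\perp\t\|_2$ where $\mat Z = \mat\Sigma_\perp\Oh_2 = \mat\Sigma_\perp\mat{V}_\perp\t\mat\Omega$. Writing $\mat Z = \sum_j \mat\Sigma_\perp\mat{V}_\perp\t\B\omega_j \vec{e}_j\t$ in terms of the columns $\B\omega_j$ of $\mat\Omega$, the matrix $\frac1\ell\mat{ZZ}\t = \frac1\ell\sum_j (\mat\Sigma_\perp\mat{V}_\perp\t\B\omega_j)(\mat\Sigma_\perp\mat{V}_\perp\t\B\omega_j)\t$ is exactly a sample covariance matrix for the random vector $\mat\Sigma_\perp\mat{V}_\perp\t\B\omega$, whose covariance is $\mat\Sigma_\perp\mat{V}_\perp\t\mat{V}_\perp\mat\Sigma_\perp\t = \mat\Sigma_\perp\mat\Sigma_\perp\t$ and which inherits a sub-Gaussian tail. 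Here I would use a sample-covariance estimate that is dimension-free in the sense of depending on the stable rank rather than the ambient dimension — the relevant statement bounds $\|\frac1\ell\mat{ZZ}\t - \mat\Sigma_\perp\mat\Sigma_\perp\t\|_2$ by $C K_{\rm E}^2\|\mat\Sigma_\perp\|_2^2$ times a quantity that is $\mc O(1)$ once $\ell \gtrsim \sr(\mat\Sigma_\perp) + $ (something involving $k$ and $V_\delta$), which under \eqref{eqn:ellbound} is already satisfied. More precisely I expect to get $\eta \lesssim K_{\rm E}^2(\sqrt{\sr(\mat\Sigma_\perp)} + \sqrt k + V_\delta)/\sqrt\ell$ with failure probability $\delta/2$, so that $1+\eta$ is what eventually produces the $\big(\sqrt{\sr(\mat\Sigma_\perp)} + \sqrt k + V_\delta\big)^2$ factor after taking square roots.

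With both events in hand, a union bound gives total failure probability at most $\delta$, and \cref{lem:samplecov} yields $\|\mat\Sigma_\perp\Oh_2\Oh_1^\dagger\|_2^2 \le \big(\|\frac1\ell\mat{ZZ}\t - \mat\Sigma_\perp\mat\Sigma_\perp\t\|_2 + \|\mat\Sigma_\perp\|_2^2\big)/(1-\varepsilon)$. Substituting the bound on $\eta$, taking a square root, and absorbing constants into a single absolute constant $C_{\rm EB}$ gives the stated bound on $\|\mat\Sigma_\perp\Oh_2\Oh_1^\dagger\|_2$; here one uses $\sqrt{a+b} \le \sqrt a + \sqrt b$ and the fact that $\sqrt{\sr(\mat\Sigma_\perp)} + \sqrt k + V_\delta \ge 1$ to fold the lone $\|\mat\Sigma_\perp\|_2^2/(1-\varepsilon)$ term into the parenthesized expression. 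The low-rank approximation bound then follows immediately by plugging this into the structural bound of \cref{thm:StrBound}, since $\|(\mat I - \mat{QQ}\t)\mat A\|_2^2 \le \|\mat\Sigma_\perp\|_2^2 + \gamma_k^{4q}\|\mat\Sigma_\perp\Oh_2\Oh_1^\dagger\|_2^2$.

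The main obstacle is the second step: getting a sample-covariance bound for $\mat Z$ that depends on $\sr(\mat\Sigma_\perp)$ rather than on $n$ or $\rank(\mat\Sigma_\perp)$. The off-the-shelf sub-Gaussian covariance estimate in \cite{vershynin2018high} carries an explicit ambient-dimension factor, which would ruin the large-scale applicability the paper is advertising; circumventing this requires either an intrinsic-dimension version of the matrix concentration inequality or a careful truncation/$\varepsilon$-net argument exploiting that the effective dimension of the range of $\mat\Sigma_\perp$ acting on these vectors is governed by the stable rank. Verifying that the resulting threshold on $\ell$ is genuinely implied by \eqref{eqn:ellbound} — i.e., that no extra $\log k$ factor sneaks in for this entrywise model — is the delicate bookkeeping point, and is precisely what distinguishes \cref{thm:subgauss} (no log factor) from the bounded-column results later in the paper.
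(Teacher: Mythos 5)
Your Step 1 is essentially the paper's: the two-sided singular-value bound for matrices with independent isotropic sub-Gaussian rows (\cref{thm:twosided}, i.e.\ \cite[Theorem 4.6.1]{vershynin2018high}) applied to $\Oh_1\t$ gives $\|\Oh_1^\dagger\|_2 \leq ((1-\varepsilon)\ell)^{-1/2}$ with failure probability $\delta/2$ under \eqref{eqn:ellbound}. The gap is in Steps 2--3. Any argument routed through \cref{lem:samplecov} (equivalently, through the split $\|\mat\Sigma_\perp\Oh_2\Oh_1^\dagger\|_2 \leq \|\mat\Sigma_\perp\Oh_2\|_2\,\|\Oh_1^\dagger\|_2$) produces at best $\|\mat\Sigma_\perp\Oh_2\Oh_1^\dagger\|_2^2 \leq \|\mat\Sigma_\perp\|_2^2\,(1+\eta)/(1-\varepsilon)$, whose right-hand side is never smaller than $\|\mat\Sigma_\perp\|_2^2$. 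The theorem's right-hand side, evaluated at the prescribed $\ell = C_{\rm ES}K_{\rm E}^4\varepsilon^{-2}(\sqrt{k}+V_\delta)^2$, equals $\frac{C_{\rm EB}}{\sqrt{C_{\rm ES}}}\frac{\varepsilon}{(1-\varepsilon)K_{\rm E}}\left(1+\frac{\sqrt{\sr(\mat\Sigma_\perp)}}{\sqrt{k}+V_\delta}\right)\|\mat\Sigma_\perp\|_2$, which falls strictly below $\|\mat\Sigma_\perp\|_2$ whenever $\varepsilon$ is small and $\sr(\mat\Sigma_\perp)\lesssim k$ (recall $K_{\rm E}\gtrsim 1$ for a unit-variance variable, and the constants are absolute, so they cannot depend on $\varepsilon$). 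Consequently your final ``folding'' step is invalid: the lone $\|\mat\Sigma_\perp\|_2^2/(1-\varepsilon)$ term cannot be absorbed into $\frac{K_{\rm E}}{\sqrt{\ell}}\left(\sqrt{\sr(\mat\Sigma_\perp)}+\sqrt{k}+V_\delta\right)$ using $\sqrt{\sr(\mat\Sigma_\perp)}+\sqrt{k}+V_\delta\geq 1$, because that whole quantity is of order $\varepsilon/K_{\rm E}$ in this regime and may be arbitrarily small while the term you are trying to absorb is of order one. (A smaller point: the covariance error $\eta$ for $\mat{Z}=\mat\Sigma_\perp\Oh_2$ should not contain a $\sqrt{k}$ term at all, since $\mat{Z}$ does not involve $\mat{V}_k$; but even with your stated $\eta$ the assembly fails for the structural reason above.)

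What the paper does instead is precisely designed to avoid this floor: it never splits the product. Conditioning on the Step-1 event, it applies the sub-Gaussian Chevet-type bound \cref{thm:subgaussnorm} (Talagrand's comparison inequality) to the triple product $\B{M}\B{S}\B{N}$ with $\B{M}=\mat\Sigma_\perp\mat{V}_\perp\t$, $\B{S}=\mat\Omega$, $\B{N}=\Oh_1^\dagger$, using $\sr(\Oh_1^\dagger)\leq k$ and $\|\B{M}\|_\xi\leq\|\mat\Sigma_\perp\|_\xi$; treating $\Oh_2$ and $\Oh_1^\dagger$ jointly is what removes the additive $O(1)\cdot\|\mat\Sigma_\perp\|_2$ term and yields the stated $\ell^{-1/2}$ scaling of the entire bound. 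Your route, even granting the stable-rank-dependent covariance estimate you correctly identify as the hard technical ingredient, would prove a statement of the shape of \cref{thm:subgauss2} --- with an extra $\sqrt{\ell}$ inside the parentheses --- which is the weaker form the paper reserves for the independent-columns model, not \cref{thm:subgauss} as stated. (The covariance-estimation picture of Section~\ref{ssec:interp} is used in the paper only as motivation; the actual proof of \cref{thm:subgauss} does not invoke \cref{lem:samplecov}.)
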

\begin{proof}
See Section~\ref{subsec:sub-Gaussian}.
\end{proof}
The result of~\Cref{thm:subgauss} is remarkable in the sense that it is applicable to any sub-Gaussian random matrix $\mat\Omega \in \R^{n \times \ell}$ as long as it comes from the model presented in~\Cref{def:randommat1}. It also predicts that the number of required samples $\ell$ is $\mc{O}(k)$. If $\B\Omega \in \R^{n \times \ell}$ is a standard Gaussian random matrix, then $K_{\rm E}$ is an absolute constant. This result is comparable with~\cite[Theorem 10.8]{halko2011finding} up to unspecified constants; \cref{thm:subgauss} has a better dependence on the failure probability.

 We discuss the implications for the other distributions mentioned in Section~\ref{ssec:subgauss}. Consider the case when $\B\Omega \in \R^{n \times \ell}$ has i.i.d entries from the sparse Rademacher distribution with sparsity parameter $s$. The entries have a sub-Gaussian norm {bounded by} $K_{\rm E} = \sqrt{s}$, so the required number of samples is $\mc{O}(s^2k)$. Thus, the larger the sparsity parameter $s$, the more samples are required. In particular, if $s=1$ (Rademacher distribution), then the required number of samples is $\mc{O}(k)$. Our result is an improvement on~\cite{saibaba2017randomized}, which suggests $\mc{O}(k\log n)$ samples are required and which appears to overestimate the {required number} of samples by a factor of $\log n$. Similarly, the uniform distribution $\mc{U}[-\sqrt{3},\sqrt{3}]$ also requires $\mc{O}(k)$ samples.

\subsection{Independent sub-Gaussian columns} The next result is applicable to sub-Gaussian matrices with a weaker assumption, namely, the individual entries {do not need to be independent}, only the columns need to be independent. We summarize this {random matrix model} in the following definition.
\begin{definition}[Independent sub-Gaussian columns] \label{def:randommat2}
In this model, we consider the random matrix $\mat\Omega \in \R^{n\times \ell}$ with the following properties:
   \begin{enumerate}
    \item Independence: the columns of matrix $\mat\Omega \in \R^{n \times \ell}$ are independent copies of the sub-Gaussian random vector $\vec{x} \in \R^n$.
    \item Normalization: the random vector $\vec{x} \in \R^n$ is isotropic.
    \item Boundedness: the random vector $\vec{x} \in \R^n$ satisfies $\|\vec{x}\|_{\psi_2} \leq K_C$, with $K_C > 0$.
\end{enumerate} 
\end{definition}

\begin{theorem}[Independent sub-Gaussian columns]\label{thm:subgauss2} Let $\mat{\Omega} \in \R^{n \times \ell}$ be a random matrix satisfying \cref{def:randommat2}. Let $\varepsilon, \delta \in (0,1)$ be user-defined parameters, and  let the number of samples satisfy 
\begin{equation}
    \label{eqn:ellbound2} n \geq \ell =\frac{C_{\rm CS}K_C^4}{\varepsilon^2} (\sqrt{k} + {V_\delta})^2  =  \mc{O}(K_C^4k).
\end{equation}
Then, with a probability of failure at most $\delta$, we have
\[ \|\mat\Sigma_\perp\Oh_2\Oh_1^\dagger\|_2  \leq   \frac{ \|\mat\Sigma_\perp\|_2}{(1-\varepsilon)\sqrt{\ell}}\left( \sqrt{\ell} + C_{\rm CB}K_{\rm C}^2(\sqrt{\sr(\mat\Sigma_\perp) } +  {V_\delta})  \right). \]
If  $\mat\Omega$ is an input to~\cref{alg:randsvd}, then with a probability of failure at most $\delta$, 
\[\|\mat{A} - \mathat{U}\mathat{\Sigma}\mathat{V}\t \|^2 \leq \|\mat\Sigma_\perp\|_2^2  + \frac{ \gamma_k^{4q}\|\mat\Sigma_\perp\|_2^2}{(1-\varepsilon)^2\ell}\left( \sqrt{\ell} + C_{\rm CB}K_{\rm C}^2(\sqrt{\sr(\mat\Sigma_\perp) } +  {V_\delta})  \right)^2. \]
Here, $C_{\rm CS}$ and $C_{\rm CB}$ are absolute constants.
\end{theorem}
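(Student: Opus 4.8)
The plan is to split the quantity of interest as $\|\mat\Sigma_\perp\Oh_2\Oh_1^\dagger\|_2 \le \|\mat\Sigma_\perp\Oh_2\|_2\,\|\Oh_1^\dagger\|_2$ --- the mechanism underlying \cref{lem:samplecov} --- and to bound the two factors separately, each on an event of probability at least $1-\delta/2$, so that a union bound (which needs no independence between $\Oh_1$ and $\Oh_2$) gives failure probability at most $\delta$; the second, low-rank approximation, statement then follows by inserting the bound into the structural estimate of \cref{thm:StrBound}.

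For the factor $\|\Oh_1^\dagger\|_2$ I would note that the columns of $\Oh_1 = \mat{V}_k\t\mat\Omega$ are independent copies of $\mat{V}_k\t\vec x$, which is zero mean, isotropic in $\R^k$ (since $\mat{V}_k\t\mat{V}_k = \mat{I}_k$ and $\vec x$ is isotropic), and has sub-Gaussian norm at most $K_C$ (since $\mat{V}_k$ maps $\mc S^{k-1}$ into $\mc S^{n-1}$). Thus $\Oh_1\t \in \R^{\ell\times k}$ has independent, isotropic, sub-Gaussian rows, and the standard non-asymptotic two-sided singular value bound for such matrices --- e.g., \cite[Theorem 4.6.1]{vershynin2018high}, with its deviation parameter taken equal to $V_\delta$ so that the failure probability is $2e^{-V_\delta^2} = \delta/2$ --- yields $\|\tfrac1\ell\Oh_1\Oh_1\t - \mat{I}_k\|_2 \le \varepsilon$ precisely under the sample requirement $\ell \gtrsim K_C^4(\sqrt k + V_\delta)^2/\varepsilon^2$ of~\eqref{eqn:ellbound2}; the power $K_C^4$ appears because the absolute constant in that bound scales like $K_C^2$ while $\ell$ enters through its square root. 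On this event $\sigma_{\min}(\Oh_1) \ge \sqrt{\ell(1-\varepsilon)} > 0$, so $\Oh_1$ has full row rank and $\|\Oh_1^\dagger\|_2 = 1/\sigma_{\min}(\Oh_1) \le 1/\sqrt{\ell(1-\varepsilon)}$.

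The factor $\|\mat\Sigma_\perp\Oh_2\|_2$ is the crux. Write $\mat\Sigma_\perp\Oh_2 = \mat{L}\mat\Omega$ with $\mat{L} := \mat\Sigma_\perp\mat{V}_\perp\t \in \R^{(m-k)\times n}$, so $\|\mat{L}\|_2 = \|\mat\Sigma_\perp\|_2$ and $\|\mat{L}\|_F = \|\mat\Sigma_\perp\|_F = \|\mat\Sigma_\perp\|_2\sqrt{\sr(\mat\Sigma_\perp)}$. The columns of $\mat\Sigma_\perp\Oh_2$ have covariance $\mat\Sigma_\perp\mat\Sigma_\perp\t$, which need not be close to a multiple of the identity and may have stable rank as large as $n-k$; hence $\tfrac1\ell\mat\Sigma_\perp\Oh_2\Oh_2\t\mat\Sigma_\perp\t$ cannot be expected to concentrate around $\mat\Sigma_\perp\mat\Sigma_\perp\t$ (that would require $\ell \gtrsim \sr(\mat\Sigma_\perp)$ samples), and the sample-covariance route used for $\Oh_1$ breaks down. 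Instead I would bound the operator norm directly, viewing it as a supremum over an ellipsoid: $\|\mat\Sigma_\perp\Oh_2\|_2 = \|\mat\Omega\t\mat{L}\t\|_2 = \sup_{\vec x\in T}\|\mat\Omega\t\vec x\|_2$ with $T := \{\mat{L}\t\vec u : \vec u\in\mc S^{m-k-1}\}\subseteq\R^n$. Since $\mat\Omega\t$ has independent, isotropic, sub-Gaussian rows (the columns of $\mat\Omega$), the matrix deviation inequality \cite[Theorem 9.1.1]{vershynin2018high} in its high-probability form (deviation parameter $V_\delta$) gives, with probability at least $1-\delta/2$,
\[ \sup_{\vec x\in T}\bigl|\,\|\mat\Omega\t\vec x\|_2 - \sqrt\ell\,\|\vec x\|_2\,\bigr| \;\le\; C K_C^2\bigl( w(T) + V_\delta\,\rad(T) \bigr), \]
where $w(T)$ is the Gaussian width of $T$ (equal to its Gaussian complexity, $T$ being symmetric). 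One then computes $\rad(T) = \|\mat{L}\t\|_2 = \|\mat\Sigma_\perp\|_2$ and, for $\vec g \sim \mc N(\vec 0, \mat{I}_n)$, $w(T) = \expect{\sup_{\vec u\in\mc S^{m-k-1}}\inner{\mat{L}\vec g}{\vec u}} = \expect{\|\mat{L}\vec g\|_2} \le \|\mat{L}\|_F = \|\mat\Sigma_\perp\|_2\sqrt{\sr(\mat\Sigma_\perp)}$, and adding $\sqrt\ell\,\rad(T)$ to both sides yields $\|\mat\Sigma_\perp\Oh_2\|_2 \le \|\mat\Sigma_\perp\|_2\bigl(\sqrt\ell + C_{\rm CB}K_C^2(\sqrt{\sr(\mat\Sigma_\perp)} + V_\delta)\bigr)$. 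The reason the final bound is size-independent is exactly that the Gaussian width of the ellipsoid $T$ is controlled by $\|\mat\Sigma_\perp\|_F$ --- equivalently $\|\mat\Sigma_\perp\|_2\sqrt{\sr(\mat\Sigma_\perp)}$ --- rather than by anything growing with $m$ or $\ell$.

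On the intersection of the two events (probability at least $1-\delta$), submultiplicativity gives $\|\mat\Sigma_\perp\Oh_2\Oh_1^\dagger\|_2 \le \|\mat\Sigma_\perp\Oh_2\|_2\,\|\Oh_1^\dagger\|_2 \le \|\mat\Sigma_\perp\|_2\bigl(\sqrt\ell + C_{\rm CB}K_C^2(\sqrt{\sr(\mat\Sigma_\perp)} + V_\delta)\bigr)/\sqrt{\ell(1-\varepsilon)}$, and since $1/\sqrt{1-\varepsilon} \le 1/(1-\varepsilon)$ for $\varepsilon\in(0,1)$ this is the stated estimate; substituting it into \cref{thm:StrBound} gives the low-rank approximation error. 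I expect the estimate of $\|\mat\Sigma_\perp\Oh_2\|_2$ to be the main obstacle: one has to extract both the leading $\sqrt\ell\,\|\mat\Sigma_\perp\|_2$ scaling and the size-independent $\|\mat\Sigma_\perp\|_F$ correction from a single chaining/matrix-deviation argument, precisely because the non-isotropic and possibly high-stable-rank covariance of the relevant columns rules out the direct covariance-estimation bound that is available for $\Oh_1$.
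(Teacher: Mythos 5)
Your proposal is correct and follows essentially the same route as the paper: submultiplicativity, the two-sided singular value bound (\cref{thm:twosided}) for $\|\Oh_1^\dagger\|_2$, a matrix-deviation/Gaussian-width argument over the ellipsoid generated by $\mat\Sigma_\perp$ for $\|\mat\Sigma_\perp\Oh_2\|_2$, a union bound, and \cref{thm:StrBound}. The only cosmetic difference is that you apply the matrix deviation inequality directly to $\mat\Omega\t$ with $T=\mat V_\perp\mat\Sigma_\perp\t\mc S^{m-k-1}$, whereas the paper first passes to $\Oh_2$ via \cref{lem:subgauss2} and then invokes \cref{thm:subgaussnorm2}, whose proof is exactly the chaining-plus-ellipsoid-width computation you wrote out.
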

\begin{proof}
See Section~\ref{subsec:sub-Gaussian-col}.
\end{proof}

The interpretation of the result of~\cref{thm:subgauss2} is very similar to that of~\Cref{thm:subgauss}. The two results are comparable since a random matrix with independent entries has also independent columns, however, the converse does not need to be true. A direct comparison though is not possible, since the precise constants are not explicitly known but the two results differ in their dependence on the sub-Gaussian norms.

Beyond independent entries, we now consider a few other special cases. If the columns of $\mat\Omega \in \R^{n \times \ell}$ are i.i.d spherical Gaussian random vectors, or if they are drawn from certain isotropic convex sets in $\R^n$ (called $\psi_2$ bodies,~\cite[Section 5.2.5]{vershynin2010introduction}), the {sub-Gaussian norms $K_C$ are absolute constants}.  In both cases, the number of required samples is $\mc{O}(k)$. In Supplementary Materials (Section~\ref{ssec:supp_mat2}) we investigate another distribution called sparse sign matrix.

We briefly discuss the coordinate distribution. Although the coordinate distribution also has independent sub-Gaussian columns, the conclusions of \cref{thm:subgauss2} are very pessimistic since the random vectors have large sub-Gaussian norms, i.e., $K_C = \sqrt{n}$. We address this issue next using a different technique.

\subsection{Independent bounded columns}

We now consider the case that the random matrix $\mat\Omega \in \R^{n \times \ell}$ has independent columns that are not necessarily sub-Gaussian. We, however, require the columns to be bounded almost surely. 

\begin{definition}[Independent bounded columns] \label{def:randommat3}
In this model, we consider the random matrix $\mat\Omega \in \R^{n\times \ell}$ with the following properties:
   \begin{enumerate}
    \item Independence: the columns of the random matrix $\mat\Omega \in \R^{n \times \ell}$ are independent copies of a random vector.
    \item Normalization: each column is isotropic.
    \item Boundedness: the columns satisfy 
    \[  \max_{1 \leq j \leq \ell}\|\mat{V}_k^\top \B\Omega\vec{e}_j\|_2 \leq K_k, \qquad  \max_{1 \leq j \leq \ell}\|\mat{V}_\perp^\top \B\Omega\vec{e}_j\|_2  \leq K_\perp,  \quad K_k > 0, \]
almost surely. \end{enumerate} 
\end{definition}

\begin{theorem}[Independent bounded columns]\label{thm:genindep} Let $\mat{\Omega} \in \R^{n \times \ell}$ be a random matrix  satisfying \cref{def:randommat3} with $K_\perp\geq 1$. Let $\varepsilon, \delta\in (0,1)$ be user-defined parameters, and suppose that the number of samples satisfies
\begin{equation}
    \label{eqn:ellbound3} n \geq \ell =\frac{2 K_k^2}{\varepsilon^2} \log(4k/\delta)  =  \mc{O}(K_k^2 \log(k)).
\end{equation}
Then,  with a probability of failure at most $\delta$, we have
\[ \|\mat\Sigma_\perp\Oh_2\Oh_1^\dagger\|_2  \leq   \frac{ \|\mat\Sigma_\perp\|_2}{\sqrt{(1-\varepsilon) 
\ell}}\left( 3\ell + K_\perp^2/3 \log ({16\sr(\mat\Sigma_\perp)}/{\delta})\right)^{1/2}. \]
If  $\mat\Omega$ is an input to~\cref{alg:randsvd}, then with a probability of failure at most $\delta$, 
\[\|\mat{A} - \mathat{U}\mathat{\Sigma}\mathat{V}\t \|^2 \leq \|\mat\Sigma_\perp\|_2^2  + \frac{\gamma_k^{4q} \|\mat\Sigma_\perp\|_2^2 }{(1-\varepsilon) \ell}\left( 3\ell + K_\perp^2 \log ({16\sr(\mat\Sigma_\perp)}/{\delta})\right). \]
\end{theorem}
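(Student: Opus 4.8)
The plan is to apply the covariance-estimation framework of \cref{lem:samplecov}, just as in the sub-Gaussian cases, but now with matrix concentration inequalities tailored to \emph{bounded} (rather than sub-Gaussian) summands. Recall that \cref{lem:samplecov} reduces everything to two events: (a) $\|\tfrac1\ell\Oh_1\Oh_1\t - \mat I\|_2 \le \varepsilon$, and (b) an upper bound on $\|\tfrac1\ell\mat{ZZ}\t - \mat\Sigma_\perp\mat\Sigma_\perp\t\|_2$ where $\mat Z = \mat\Sigma_\perp\Oh_2$. Under \cref{def:randommat3} the columns of $\Oh_1 = \mat V_k\t\B\Omega$ are independent, isotropic, and bounded by $K_k$ in norm, so $\tfrac1\ell\Oh_1\Oh_1\t = \tfrac1\ell\sum_{j=1}^\ell \vec x_j\vec x_j\t$ is an average of independent PSD matrices each of operator norm at most $K_k^2$, with mean $\mat I_k$. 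The matrix Chernoff/Bernstein inequality then gives $\|\tfrac1\ell\Oh_1\Oh_1\t - \mat I\|_2 \le \varepsilon$ with failure probability at most $2k\exp(-c\varepsilon^2\ell/K_k^2)$; setting this equal to $\delta/2$ produces exactly the sample bound $\ell = \tfrac{2K_k^2}{\varepsilon^2}\log(4k/\delta)$ in \eqref{eqn:ellbound3} (the factor $2$ and the $\log(4k/\delta)$ indicate that the matrix Chernoff bound of~\cite{tropp2015introduction} is the intended tool, with its explicit constants).

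For the second event, write $\mat Z\mat Z\t = \sum_{j=1}^\ell \mat\Sigma_\perp\vec w_j\vec w_j\t\mat\Sigma_\perp\t$ where $\vec w_j = \mat V_\perp\t\B\Omega\vec e_j$ are independent, isotropic, and satisfy $\|\vec w_j\|_2 \le K_\perp$ a.s. Each summand $\mat\Sigma_\perp\vec w_j\vec w_j\t\mat\Sigma_\perp\t$ is PSD with operator norm at most $K_\perp^2\|\mat\Sigma_\perp\|_2^2$, and the sum has mean $\ell\,\mat\Sigma_\perp\mat\Sigma_\perp\t$; its ``matrix variance'' is controlled by $\trace(\mat\Sigma_\perp\mat\Sigma_\perp\t) = \|\mat\Sigma_\perp\|_F^2 = \sr(\mat\Sigma_\perp)\|\mat\Sigma_\perp\|_2^2$ (this is where the stable rank enters, and it is the reason the final bound reads $\log(16\,\sr(\mat\Sigma_\perp)/\delta)$ rather than $\log(16(n-k)/\delta)$ — an intrinsic-dimension version of matrix Bernstein, as in~\cite[Section 7]{tropp2015introduction}, must be used). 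I would not insist on the relative bound \eqref{eqn:covestimates} here; instead I would track the additive quantity $\|\tfrac1\ell\mat Z\mat Z\t - \mat\Sigma_\perp\mat\Sigma_\perp\t\|_2$ directly and feed it into the first (sharper) inequality of \cref{lem:samplecov}, namely
\[
\|\mat\Sigma_\perp\Oh_2\Oh_1^\dagger\|_2^2 \;\le\; \frac{\|\tfrac1\ell\mat Z\mat Z\t - \mat\Sigma_\perp\mat\Sigma_\perp\t\|_2 + \|\mat\Sigma_\perp\|_2^2}{1 - \varepsilon}.
\]
Bounding the numerator by $(\text{const}\cdot\ell + K_\perp^2\log(\cdots))\|\mat\Sigma_\perp\|_2^2/\ell$ after a union bound (splitting $\delta$ between the two events) and simplifying the constants into the displayed ``$3\ell$'' term yields the first claimed inequality; the low-rank-approximation bound then follows by substituting into \cref{thm:StrBound} and squaring, exactly as in the proofs of \cref{thm:subgauss,thm:subgauss2}.

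The main obstacle is getting the matrix-Bernstein estimate for $\tfrac1\ell\mat Z\mat Z\t$ to produce a \emph{dimension-free} deviation — i.e., replacing an ambient $(n-k)$ by $\sr(\mat\Sigma_\perp)$ — while keeping the constants clean enough to land on the stated form. This requires the intrinsic-dimension variant of matrix Bernstein and a careful choice of which regime (the ``variance'' term $\propto\sqrt{\ell}$ versus the ``large-deviation'' term $\propto K_\perp^2\log(\cdots)$) dominates, so that the two contributions can be combined under a single square root. A secondary subtlety is the interplay between the two events: because $\ell$ is already forced to be $\Omega(K_k^2\log(k/\delta))$ by event (a), one must check that this same $\ell$ is large enough for the $\mat Z\mat Z\t$ concentration to give the clean ``$3\ell + \cdots$'' bound rather than something worse; I expect this is where the hypothesis $K_\perp\ge 1$ and the specific numerical constants ($2$, $3$, $1/3$, $16$) are used.
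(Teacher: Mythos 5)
Your proposal is correct and follows essentially the same route as the paper's proof: a matrix Chernoff bound on $\sum_j (\mat{V}_k\t\mat\Omega\vec{e}_j)(\mat{V}_k\t\mat\Omega\vec{e}_j)\t$ to control $\|\Oh_1^\dagger\|_2$ and fix $\ell$, an intrinsic-dimension matrix Bernstein bound on $\frac1\ell\mat{ZZ}\t-\mat\Sigma_\perp\mat\Sigma_\perp\t$ (with variance proxy proportional to $\mat\Sigma_\perp\mat\Sigma_\perp\t$, whose intrinsic dimension is $\sr(\mat\Sigma_\perp)$, which is exactly where the $\log(16\,\sr(\mat\Sigma_\perp)/\delta)$ comes from), and a union bound splitting $\delta$. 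Your routing of the combination through the first inequality of \cref{lem:samplecov} is materially identical to the paper's inline use of submultiplicativity plus the triangle inequality, and the remaining work you flag (solving for $t$ and absorbing constants into the ``$3\ell$'' term, where $K_\perp\ge 1$ is used to bound $K_\perp^2+1\le 2K_\perp^2$) is exactly the constant-chasing the paper carries out.
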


\begin{proof}
See Section~\ref{subsec:bounded-col}.
\end{proof}

A nice feature of \cref{thm:genindep} is that, unlike \cref{thm:subgauss,thm:subgauss2}, all the constants involved are known explicitly. However, care should be taken while applying these results since the parameters $K_k$ and  $K_\perp$, although may depend on the dimensions of the problem, are independent of the spectrum of the matrix. To illustrate the result of~\Cref{thm:genindep}, we consider the coordinate distribution (Section~\ref{ssec:supp_mat3}) and leverage score sampling.

\paragraph{Leverage score distribution} To address the issue with the coordinate distribution, a remedy proposed in the literature was to use an importance sampling approach based on the squared row norms of $\mat{V}_k$, known as the (subspace) leverage scores
\begin{equation}
    \label{eqn:lev}
    p_i^{\rm lev} \equiv \frac{\|\mat{V}_k^\top\vec{e}_i\|_2^2}{k} \qquad 1 \leq i \leq n.
\end{equation}
It is easy to verify that $\sum_{i=1}^np_i^{\rm lev} = 1$, so the leverage scores define a discrete probability measure. Consider another probability measure that satisfies $p_i \geq \gamma p_i^{\rm lev} > 0$ for $1 \leq i \leq n$ and $\gamma \in (0,1]$, and construct the columns of the random matrix $\B\Omega$ as 
\[ \B\Omega = \bmat{ \frac{1}{\sqrt{\ell p_{t_1}}} \vec{e}_{t_1} & \dots & \frac{1}{\sqrt{\ell p_{t_\ell}}} \vec{e}_{t_\ell} }, \]
where $t_j $ are drawn uniformly at random from $\{1,\dots,n\}$ with replacement. Then it follows that
\[ \max_{1 \leq j \leq \ell }\|\mat{V}_k^\top \B\Omega\vec{e}_j\|_2 \leq K_k \equiv \sqrt{\frac{k}{\gamma}} \qquad   \max_{1 \leq j \leq \ell }\|\mat{V}_\perp^\top \B\Omega\vec{e}_j\|_2 \leq K_\perp \equiv \sqrt{\frac{ 1}{ \ell \min\limits_{1 \leq j \leq n}{p_j}}}.\] 
By \cref{thm:genindep}, the number of required samples is $\ell = \mc{O}(k\log k)$. However, computing the leverage scores is, in general, computationally challenging and there is a trade-off between the good features of leverage scores versus the computational cost in obtaining them; see~\cite[Section 9.6.4]{martinsson2020randomized}.

The main takeaway is that the number of samples has a logarithmic dependence on the target rank $k$. In general, this condition is necessary and is related to the coupon collector problem (see~\cite[Remark 11.2]{halko2011finding} and~\cite[Sections 5.5--5.6]{vershynin2010introduction}).

\subsection{Independent columns with bounded second moment}

\begin{definition}\label{def:randommat4}
In this model, we consider the random matrix $\mat\Omega \in \R^{n\times \ell}$ with the following properties:
   \begin{enumerate}
    \item Independence: the columns of the random matrix $\mat\Omega \in \R^{n \times \ell}$ are independent copies of a random vector.
    \item Normalization: each column is isotropic.
    \item Boundedness: the columns satisfy
    \[ \expect{\max_{1 \leq j \leq \ell}\|\mat{V}_k^\top\mat\Omega\vec{e}_j\|_2^2 } \leq K_M, \]
    where $K_M$ is allowed to depend on $n$ and $k$.    
\end{enumerate} 
\end{definition}

This is the most general class of random matrices we consider since it subsumes \cref{def:randommat1,def:randommat2,def:randommat3}. This is explained further in Section~\ref{ssec:otherex}. We now present a result for this class of random matrices. 

\begin{theorem}[Independent columns with bounded second moment]
\label{thm:indepinexp}
Let $\mat\Omega \in \R^{n\times \ell}$ be a random matrix satisfying~\Cref{def:randommat4}. 
Let $\varepsilon, \delta \in (0,1)$ be user-defined parameters. Suppose that the number of samples satisfies 
\[  n \geq \ell = \frac{K_MC_{BCS}}{\varepsilon^2 \delta^2 }\log(k).\]
Then, with a probability of failure at most $\delta$, we have 
\[ \|\mat\Sigma_\perp\Oh_2\Oh_1^\dagger\|_2  \leq \frac{\|\mat\Sigma_\perp\|_2}{(1-\varepsilon)\delta} (1 + 2\sqrt{\sr(\mat\Sigma_\perp)}). 
\]
If  $\mat\Omega$ is an input to~\cref{alg:randsvd}, then with a probability of failure at most $\delta$,
\[\|\mat{A} - \mathat{U}\mathat{\Sigma}\mathat{V}\t \|^2 \leq \|\mat\Sigma_\perp\|_2^2  + \frac{\gamma_k^{4q}\|\mat\Sigma_\perp\|_2^2 }{(1-\varepsilon)^2\delta^2} ( 1 + 2\sqrt{\sr(\mat\Sigma_\perp)})^2 . \]
Here, $C_{BCS}$ is an absolute constant.
\end{theorem}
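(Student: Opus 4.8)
The plan is to follow the same template used (implicitly) for the previous three theorems: reduce everything to the two sample-covariance estimates in \eqref{eqn:covestimates}, then invoke \cref{lem:samplecov} and \cref{thm:StrBound}. Concretely, with $\mat{Z} = \mat{\Sigma}_\perp\Oh_2$, I want to show that under the stated bound on $\ell$, with failure probability at most $\delta$ one has simultaneously
\[ \|\tfrac1\ell\Oh_1\Oh_1\t - \mat{I}\|_2 \leq \varepsilon \qquad\text{and}\qquad \|\tfrac1\ell \mat{Z}\mat{Z}\t - \mat\Sigma_\perp\mat\Sigma_\perp\t\|_2 \leq \eta\|\mat\Sigma_\perp\|_2^2 \]
for a suitable $\eta$, and then plug into \cref{lem:samplecov}. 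The second of the two algorithmic conclusions then follows immediately from \cref{thm:StrBound} by substituting the first displayed bound.

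For the first estimate, I would control $\|\tfrac1\ell\Oh_1\Oh_1\t - \mat{I}\|_2$ in expectation using the boundedness assumption $\expect{\max_j \|\mat{V}_k\t\mat\Omega\vec e_j\|_2^2} \leq K_M$ together with the isotropy of the columns: this is exactly the regime of the matrix-Rosenthal / matrix-Bernstein-type bound for sums of independent random PSD matrices with a second-moment (rather than uniform-boundedness) control, which gives $\expect{\|\tfrac1\ell\Oh_1\Oh_1\t - \mat{I}\|_2} \lesssim \sqrt{K_M\log k/\ell} + K_M\log k/\ell$ (the $\log k$ coming from the ambient dimension $k$ of these matrices). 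With $\ell = K_M C_{BCS}\varepsilon^{-2}\delta^{-2}\log k$ this expectation is $\lesssim \varepsilon\delta$, so by Markov's inequality $\|\tfrac1\ell\Oh_1\Oh_1\t - \mat{I}\|_2 \leq \varepsilon$ with failure probability $\leq \delta/2$. For the second estimate, I note $\expect{\tfrac1\ell\mat{Z}\mat{Z}\t} = \mat\Sigma_\perp\mat\Sigma_\perp\t$; applying Markov directly to $\|\tfrac1\ell\mat{Z}\mat{Z}\t - \mat\Sigma_\perp\mat\Sigma_\perp\t\|_2$ after bounding its expectation by $\lesssim \|\mat\Sigma_\perp\|_2^2\sqrt{\sr(\mat\Sigma_\perp)}$ (via a trace/Frobenius bound on $\expect{\tfrac1\ell\mat{Z}\mat{Z}\t}$ exploiting that $\expect{\Oh_2\vec e_j\vec e_j\t\Oh_2\t} = \mat I$ so $\expect{\|\mat{Z}\vec e_j\|_2^2} = \|\mat\Sigma_\perp\|_F^2$) yields $\eta = \mc O(\sqrt{\sr(\mat\Sigma_\perp)}/\delta)$ with failure probability $\leq \delta/2$. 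A union bound then gives both events with probability $\geq 1-\delta$, and \cref{lem:samplecov} produces $\|\mat\Sigma_\perp\Oh_2\Oh_1^\dagger\|_2^2 \leq \|\mat\Sigma_\perp\|_2^2(1+\eta)/(1-\varepsilon)$; taking square roots and bounding $\sqrt{1+\eta} \leq 1 + \sqrt{\eta}$ (or absorbing constants) recovers the stated $\frac{\|\mat\Sigma_\perp\|_2}{(1-\varepsilon)\delta}(1+2\sqrt{\sr(\mat\Sigma_\perp)})$ form.

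The main obstacle I anticipate is getting the \emph{second-moment} matrix concentration bound right: unlike \cref{thm:genindep}, where the columns are bounded almost surely and a matrix Chernoff/Bernstein bound applies directly, here we only control a second moment of the maximum column norm, so one needs a weak-variance/Rosenthal-type inequality (e.g.\ the noncommutative Khintchine or matrix Rosenthal inequality, or a truncation argument splitting on whether $\|\mat{V}_k\t\mat\Omega\vec e_j\|_2$ exceeds a threshold). Tracking the $\log k$ factor and verifying that it does not degrade to $\log^2 k$, and confirming that the $\delta^{-2}$ dependence (rather than $\log(1/\delta)$) is an artifact of using Markov rather than an exponential tail, are the delicate points. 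I would also need to double-check that the isotropy assumption on the columns is genuinely what makes $\expect{\Oh_1\Oh_1\t} = \ell\mat I$ and $\expect{\Oh_2\Oh_2\t} = \ell\mat I$ hold, since those are the unbiasedness facts driving both estimates.

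Finally, I would remark (as the paper's narrative suggests) that since \cref{def:randommat4} subsumes the earlier three models, in the special cases where stronger tail control is available the $\delta^{-2}$ can be sharpened to logarithmic in $1/\delta$; but for the general second-moment-only assumption, Markov's inequality is essentially all that is available, which is why the bound is stated in this weaker polynomial-in-$1/\delta$ form.
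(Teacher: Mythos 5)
Your overall architecture is essentially the paper's: for $\Oh_1$ the paper also invokes the heavy-tailed-rows bound of Vershynin (the expectation bound $\expect{\max_{j\leq k}|\sigma_j(\Oh_1)-\sqrt{\ell}|}\leq C\sqrt{K_M\log k}$, which is exactly the second-moment/Rudelson-type result you are reaching for) followed by Markov's inequality, and this is where both the $\log k$ and the $\delta^{-2}$, $\varepsilon^{-2}$ in the sample size come from, just as you predict. Where you diverge is the second half: the paper does not verify the second condition of \cref{lem:samplecov}; instead it bounds $\|\mat\Sigma_\perp\Oh_2\Oh_1^\dagger\|_2\leq\|\mat\Sigma_\perp\Oh_2\|_2\|\Oh_1^\dagger\|_2$, controls $\expect{\|\frac1\ell\mat{ZZ}\t-\mat\Sigma_\perp\mat\Sigma_\perp\t\|_2}$ by symmetrization plus the non-commutative Khintchine inequality (\cref{thm:khintchine} with $p=2$), deduces $\expect{\|\mat{Z}\|_2^2}\leq\ell\|\mat\Sigma_\perp\|_2^2(1+2\sqrt2\,\sr(\mat\Sigma_\perp))$, and applies Markov to $\|\mat{Z}\|_2$. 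Your route through \cref{lem:samplecov} with Markov applied directly to the covariance deviation is a legitimate alternative and arguably more economical (the deviation can even be bounded by the crude estimate $\frac1\ell\trace(\mat{ZZ}\t)+\|\mat\Sigma_\perp\|_2^2$, whose expectation is at most $2\|\mat\Sigma_\perp\|_F^2$, with no symmetrization needed), and your diagnosis of the $\delta^{-2}$ as a Markov artifact matches the paper's discussion.

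The one concrete inaccuracy is your intermediate claim $\expect{\|\frac1\ell\mat{ZZ}\t-\mat\Sigma_\perp\mat\Sigma_\perp\t\|_2}\lesssim\|\mat\Sigma_\perp\|_2^2\sqrt{\sr(\mat\Sigma_\perp)}$. The trace/Frobenius argument you sketch (using $\expect{\|\mat{Z}\vec e_j\|_2^2}=\|\mat\Sigma_\perp\|_F^2$) gives $\|\mat\Sigma_\perp\|_2^2\,\sr(\mat\Sigma_\perp)$, not its square root, and under \cref{def:randommat4} (which imposes no moment control on $\mat{V}_\perp\t\mat\Omega$ beyond isotropy) the $\sqrt{\sr}$ rate is not available in general — the paper's own Khintchine computation likewise only yields $2\sqrt2\,\|\mat\Sigma_\perp\|_F^2$. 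This slip does not sink your proof: taking $\eta=\mc{O}(\sr(\mat\Sigma_\perp)/\delta)$ from Markov, \cref{lem:samplecov} gives $\|\mat\Sigma_\perp\Oh_2\Oh_1^\dagger\|_2\leq\frac{\|\mat\Sigma_\perp\|_2}{\sqrt{1-\varepsilon}}\bigl(1+2\sqrt{\sr(\mat\Sigma_\perp)/\delta}\bigr)$, and since $\sqrt{\delta}\geq\delta$ and $\sqrt{1-\varepsilon}\geq1-\varepsilon$ this implies the stated bound $\frac{\|\mat\Sigma_\perp\|_2}{(1-\varepsilon)\delta}(1+2\sqrt{\sr(\mat\Sigma_\perp)})$; the square root at the end of the lemma is what converts $\sr$ into $\sqrt{\sr}$, so you should correct the exponent there rather than rely on $\sqrt{1+\eta}\leq1+\sqrt{\eta}$ with the erroneous $\eta$. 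With that fix, and an explicit appeal to the heavy-tailed-rows theorem (rather than matrix Bernstein, which needs almost-sure boundedness) for the $\Oh_1$ step, your argument is complete and parallels the paper's.
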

\begin{proof}
See Section~\ref{subsec:bounded-col-exp}.
\end{proof}

By the discussion before the statement of~\Cref{thm:indepinexp}, we see that this result is applicable to a large class of random matrices. However, compared to the previous results, this has a worse dependence on the failure probability $\delta$. In the following discussion, we explore a class of random matrices that satisfies \cref{def:randommat4} and for which \cref{thm:indepinexp} therefore applies. We discuss a class of random matrices constructed using random variables whose tails can decay faster than sub-exponential.   In Supplementary Materials (Section~\ref{ssec:supp_mat4}), we also discuss two classes of distributions called uniform sampling from a convex set and log-concave distributions.

\paragraph{$\alpha$-sub-exponential}
A random variable $X$ is said to be $\alpha-$sub-exponential~\cite{Goetze_2021,sambale2022some} with parameter $0 < \alpha \leq 2$ if 
\[ \prob{ |X| \geq t} \leq c\exp(-Ct^\alpha), \qquad t \geq 0,\]
where $c,C$ are constants. Equivalently, this can be expressed in terms of the exponential Orlicz quasi-norm, i.e., $\|X\|_{\psi_\alpha} := \{\inf t > 0 \ | \ \expect{\exp(|X|^\alpha/t^\alpha)}\leq 2\}$.
Strictly speaking, it is only a norm for $\alpha \geq 1$. 
It generalizes the class of sub-Gaussian ($\alpha = 2$), sub-exponential ($\alpha =1$), sub-Weibull $\alpha \in (0,1]$ (that have heavy tails), and bounded ($\alpha > 0)$ random variables. We provide a result for the minimal number of samples required for this class of random matrices.
\begin{corollary}\label{cor:alpha}
 Let $\mat\Omega \in \R^{n \times \ell}$ be a matrix with independent copies of a random variable $X$ drawn from the $\alpha-$sub-exponential class with zero mean, unit variance and $\|X\|_{\psi_\alpha} \leq M$ with  $n \geq 3$ and  $M \geq 1$.  
    With the rest of the setup in \cref{thm:indepinexp}, the minimal number of samples for this model satisfies $$\ell = \mc{O}((k + M^2 (\sqrt{k\log n} + (2/\alpha)^{2/\alpha}))\log k).$$ 
\end{corollary}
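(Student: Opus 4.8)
The plan is to verify that a random matrix $\mat\Omega$ with i.i.d. $\alpha$-sub-exponential entries fits the model of \cref{def:randommat4}, and then to bound the parameter $K_M$ sharply enough to match the claimed sample complexity. Since the entries are independent, zero mean, and unit variance, each column of $\mat\Omega$ is automatically isotropic, so the only substantive work is to estimate
\[
K_M = \expect{\max_{1 \leq j \leq \ell}\|\mat{V}_k^\top\mat\Omega\vec{e}_j\|_2^2},
\]
and then substitute into the sample-size condition $\ell = \tfrac{K_M C_{BCS}}{\varepsilon^2\delta^2}\log k$ of \cref{thm:indepinexp}. The subtlety is that $K_M$ is itself allowed to depend on $\ell$ (through the maximum over $j = 1,\dots,\ell$), so the bound on $\ell$ is really a fixed-point/self-consistent inequality that must be solved for $\ell$.

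First I would fix a column index $j$ and study the single random vector $\vec{w}_j := \mat{V}_k^\top\mat\Omega\vec{e}_j \in \R^k$. Each coordinate of $\vec{w}_j$ is a linear combination $\sum_{i} (\mat{V}_k)_{i,r} X_{ij}$ of independent $\alpha$-sub-exponential variables with $\|X\|_{\psi_\alpha}\le M$; by the standard calculus of $\psi_\alpha$ quasi-norms (see \cite{Goetze_2021,sambale2022some}, or equivalently the centering/combination lemmas for $\psi_\alpha$), each coordinate has $\psi_\alpha$-norm at most $CM$ times the Euclidean norm of the corresponding column of $\mat{V}_k$, which is at most $CM$ since $\mat{V}_k$ has orthonormal columns. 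Then $\|\vec{w}_j\|_2^2 = \sum_{r=1}^k \vec{w}_j(r)^2$ is a sum of $k$ nonnegative $\psi_{\alpha/2}$ random variables with mean $1$ each, so $\expect{\|\vec{w}_j\|_2^2} = k$ and, by a Bernstein-type concentration inequality for sums of independent $\psi_{\alpha/2}$ variables, $\|\vec{w}_j\|_2^2$ concentrates around $k$ with deviations controlled by $M^2$ and $(2/\alpha)^{2/\alpha}$-type factors. From this I would extract a tail bound $\prob{\|\vec{w}_j\|_2^2 \ge k + t} \le \exp(-c\,\Psi(t))$ for an explicit rate function $\Psi$.

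Next I would take the maximum over $j=1,\dots,\ell$: by a union bound over the $\ell$ columns and integration of the tail, $\expect{\max_j \|\vec{w}_j\|_2^2} \lesssim k + M^2(\sqrt{k\log\ell} + (2/\alpha)^{2/\alpha}\log\ell)$, where the two terms inside come from the sub-Gaussian and sub-$\psi_{\alpha/2}$ parts of the Bernstein tail respectively. This gives $K_M = \mc{O}\big(k + M^2(\sqrt{k\log\ell} + (2/\alpha)^{2/\alpha}\log\ell)\big)$. Plugging into $\ell = \tfrac{K_M C_{BCS}}{\varepsilon^2\delta^2}\log k$ and then replacing the internal $\log\ell$ by $\log n$ (legitimate since $\ell \le n$), one obtains the self-consistent bound
\[
\ell = \mc{O}\big((k + M^2(\sqrt{k\log n} + (2/\alpha)^{2/\alpha}\log n))\log k\big),
\]
and after absorbing the $(\log k)(\log n)$ cross term (e.g. bounding $\log n \le$ polynomial and noting it is dominated, or simply keeping it inside the $\mc{O}$), this matches the stated $\ell = \mc{O}((k + M^2(\sqrt{k\log n} + (2/\alpha)^{2/\alpha}))\log k)$ up to the treatment of lower-order logarithmic factors.

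The main obstacle I anticipate is the bookkeeping of $\psi_{\alpha/2}$ concentration for $\alpha < 1$ (the sub-Weibull, heavy-tailed regime): for $\alpha/2 < 1$ the quantity $\|\cdot\|_{\psi_{\alpha/2}}$ is only a quasi-norm, so one cannot use the triangle inequality directly and must instead invoke the generalized Bernstein inequalities of \cite{Goetze_2021,sambale2022some} for sums of independent $\psi_\beta$ variables with $\beta<1$, keeping careful track of the $(2/\alpha)^{2/\alpha}$-type constants that appear in those inequalities. A secondary (but purely algebraic) obstacle is solving the self-consistent inequality for $\ell$ cleanly, i.e. justifying the replacement $\log\ell \rightsquigarrow \log n$ and checking that the cross terms do not blow up the final rate — this is routine but must be done carefully to land exactly on the advertised bound.
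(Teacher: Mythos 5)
Your overall strategy (verify \cref{def:randommat4}, bound $K_M=\expect{\max_j\|\mat{V}_k^\top\mat\Omega\vec e_j\|_2^2}$, plug into the sample-size condition of \cref{thm:indepinexp}, and replace $\log\ell$ by $\log n$ to dodge the self-consistency issue) is the same as the paper's, but the core estimate does not go through as written. The coordinates of $\vec w_j=\mat{V}_k^\top\mat\Omega\vec e_j$ are \emph{not} independent: each is a linear combination of the same entries $X_{ij}$ of the $j$-th column, and orthonormality of $\mat V_k$ only makes them uncorrelated, not independent, for non-Gaussian $X$. So you cannot apply a Bernstein-type inequality for sums of \emph{independent} $\psi_{\alpha/2}$ variables to $\|\vec w_j\|_2^2=\sum_r \vec w_j(r)^2$. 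What you actually need to control is the quadratic form $(\mat\Omega\vec e_j)^\top\mat V_k\mat V_k^\top(\mat\Omega\vec e_j)$ in the independent entries, and the right tool --- the one the paper uses --- is the Hanson--Wright inequality for $\alpha$-sub-exponential variables \cite[Proposition 1.1]{Goetze_2021}, which directly gives the two-regime tail $\prob{\,|\,\|\mat{V}_k^\top\mat\Omega\vec e_j\|_2^2-k\,|\ge t\,}\le \exp\left(-\min\left\{t^2/(M^4k),\,t^{\alpha/2}/M^\alpha\right\}\right)$, bypassing your coordinate-wise $\psi_\alpha$/$\psi_{\alpha/2}$ calculus (which in any case hides $\alpha$-dependent constants you would have to track).

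The second gap is the bookkeeping of the $\alpha$-regime. Your route lands on $K_M=\mc{O}\bigl(k+M^2(\sqrt{k\log n}+(2/\alpha)^{2/\alpha}\log n)\bigr)$ and you propose to ``absorb'' the extra $\log n$; but the corollary asserts $(2/\alpha)^{2/\alpha}$ with no logarithm attached, and when $k\lesssim\log n$ (or $\alpha$ is small) your bound is genuinely weaker, so it cannot be dismissed as a lower-order factor --- moreover a careful computation of the expected maximum from a $\psi_{\alpha/2}$-type tail would produce $M^2(\log n)^{2/\alpha}$ rather than $(2/\alpha)^{2/\alpha}\log n$, so neither version of your bookkeeping matches the statement. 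The paper avoids any logarithm on that term by a different argument: it sets $Z=\max_j|\,\|\mat{V}_k^\top\mat\Omega\vec e_j\|_2^2-k\,|$, uses $K_M\le k+\expect{Z}$, integrates the union-bounded tail with a threshold $\mu=M^2\sqrt{k\log(n\sqrt{k/2})}$ chosen from the Gaussian regime, and then bounds the residual $\alpha$-regime integral $\int_\mu^\infty \exp(-t^{\alpha/2}/M^\alpha)\,dt$ by the polynomial comparison $e^{-x}\le (p/(ex))^p$, yielding an additive $M^2C_\alpha$ with $C_\alpha=\tfrac12(2/(\alpha e))^{2/\alpha}$ that is free of $n$ and $k$. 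Reworking your middle steps with Hanson--Wright plus this threshold-and-tail-integral argument essentially reproduces the paper's proof.
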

\begin{proof}
    See Supplementary materials (Section~\ref{ssec:supp_mat4}).
\end{proof}
This result extends the analysis to a wide class of distributions. The sub-exponential family includes the exponential, Laplace, Logistic, chi-squared, and Poisson distributions. However, to use \cref{thm:indepinexp}, we have to center them appropriately to ensure zero mean and unit variance.

\subsection{Summary and applications of the results}

A summary of the number of samples required is given in \cref{tab:summary} for the {four} different models of random matrices discussed in Section~\ref{sec:main}. In the Supplementary Materials (Section~\ref{ssec:otherex}), we discuss a few additional examples and provide more details on some computations.

\begin{table}[!ht]
    \centering
    \begin{tabular}{c|c|c}\hline
    \multicolumn{3}{c}{Independent sub-Gaussian entries}\\ \hline
    Random matrix &  Parameters &  Num. samples $\ell$\\ \hline
    Generic & $K_E$ & $\mc{O}(K_E^4 k)$ \\
    Gaussian     &  - & $\mc{O}(k)$ \\
    Sparse Rademacher     &  Sparsity $s \in [1,\infty)$ &  $\mc{O}(s^2 k)$ \\
    Uniform     &  -  & $\mc{O}(k)$ \\
    \hline    
    \multicolumn{3}{c}{Independent sub-Gaussian columns}\\ \hline
    Generic & $K_C$ & $\mc{O}(K_C^4k)$ \\
    Spherical     &  -  & $\mc{O}(k)$ \\
    Sparse sign     & Nonzeros per column $N$ & $\mc{O}(k(n/N)^2)$  \\\hline
        \multicolumn{3}{c}{Independent bounded columns}\\ \hline
        Generic & $K_k$ & $\mc{O}(K_k^2 \log k)$\\
        Coordinate distribution & Coherence $\mu(\mat{V}_k)$ & $\mc{O}(n \mu(\mat{V}_k)\log k) $   \\
    Leverage score    & -   & $\mc{O}(k\log k)$  \\ \hline
    \multicolumn{3}{c}{Independent columns with bounded moments}\\ \hline 
    Generic & $K_M$ & $\mc{O}(K_M \log k)$\\
    $\alpha-$sub-exponential & & $\mc{O}( (k + \sqrt{k\log n})\log k)$ \\
    Log-concave & & $\mc{O}( (k+\log(n))\log(k) ) $ 
    \end{tabular}
    \caption{Summary of the minimal number of samples $\ell$ required for the different random matrix models. See also Supplementary Materials (Section \ref{ssec:otherex}). The parameters $K_C, K_k, $ and $K_M$ may implicitly depend on $n$ and $k$.}
    \label{tab:summary}
\end{table}

In this section, we have addressed the error in the low-rank approximation using the randomized subspace iteration method. In the Supplementary Materials (Section~\ref{sm:Nystrom}), we show how this analysis can be adapted for the Nystr\"om method. This shows that our bounds are applicable in a wider context in randomized low-rank approximations.

In some instances, it is required to bound $\uninorm{\B\Sigma_\perp \Oh_2\Oh_1^\dagger}$ in a unitarily invariant norm. To apply our results, we can use strong submultiplicativity~\cite[(IV.40) and Proposition IV.2.4]{bhatia1997matrix} to bound 
\[ \uninorm{\mat\Sigma_\perp \Oh_2\Oh_1^\dagger} \leq \|\mat\Sigma_\perp \Oh_2\Oh_1^\dagger\|_2 \uninorm{\mat{I}_k}. \] 
Using this approach, we incur a small additional factor $\uninorm{\mat{I}_k}$. However, this factor only depends on the target rank $k$, {and not on either of} the dimensions of the matrix $m$ and $n$. For the spectral norm, this factor is $1$, for the Frobenius norm, it is $\sqrt{k}$, whereas for the Schatten-1 norm, it is $k$.  

\section{Proofs}\label{sec:proofs}

For self-contained presentation, this section contains the proofs and details of the main results. Some of the proofs and auxiliary results are given in Supplementary Materials (Section~\ref{ssec:exproofs}).

\subsection{Auxiliary results}\label{ssec:aux}

\begin{proof}[Proof of \cref{lem:gauss}]
From the proof of~\cite[Theorem 10.8]{halko2011finding}, with probability at least $1-2t^{-p} -e^{-u^2/2}$,
\[\|\mat\Sigma_\perp\Oh_2\Oh_1^\dagger\|_2 \leq \|\mat\Sigma_\perp\|_2\sqrt{\frac{3k}{p+1}}\cdot t + \|\mat\Sigma_\perp\|_F\frac{e\sqrt{k+p}}{p+1}\cdot t +\|\mat\Sigma_\perp\|_2 \frac{e\sqrt{k+p}}{p+1}\cdot ut.  \]
Set $2t^{-p} = \delta/2$ and $e^{-u^2/2} = \delta/2$, and solve for $t$ and $u$. Plug these values into the above inequality.
\end{proof}

\begin{proof}[Proof of \cref{lem:samplecov}]
By submultiplicativity $\|\mat\Sigma_\perp\mat\Oh_2\Oh_1^\dagger\|_2 \leq \|\mat{Z}\|_2 \|\Oh_1^\dagger\|_2$, so we focus on each term separately. 
Consider $\|\mat{Z}\|_2$. Using the triangle inequality and the property $\|\mat{Z}\|_2^2= \|\mat{ZZ}\t\|_2$, we get 
\[ \|\mat{Z}\|_2^2  \leq \ell \left(\|\frac{1}{\ell}\mat{ZZ}\t- \mat{\Sigma}_\perp\mat\Sigma_\perp\t\|_2 + \|\mat\Sigma_\perp\|_2^2\right) \leq \ell\|\mat\Sigma_\perp\|_2^2 (1+\eta).  \]
Next, consider $\|\Oh_1^\dagger\|_2$. From the assumption in~\eqref{eqn:covestimates}, 
\[ |\frac{1}{\ell}\sigma_k^2(\Oh_1) -1| \leq \|\frac{1}{\ell}\Oh_1\Oh_1\t - \mat{I}\|_2 \leq \varepsilon < 1.\]
Therefore, $\ell(1 - \varepsilon) \leq \sigma_k^2 (\Oh_1) \leq \ell(1+\varepsilon)$. Since $\varepsilon < 1$, this gives $$\|\Oh_1^\dagger\|_2^2 = 1/\sigma_k^2(\Oh_1) \leq \frac{\ell^{-1}}{1 - \|\frac{1}{\ell}\Oh_1\Oh_1\t - \mat{I}\|_2 }\leq   \frac{1}{{(1-\varepsilon)\ell}}.$$ Combining intermediate results yields the desired bound. 
\end{proof}

\subsection{Independent sub-Gaussian entries}
\label{subsec:sub-Gaussian}
To establish the result of Theorem~\ref{thm:subgauss}, we need to prove the following result on the norm of a sub-Gaussian random matrix, which may be of independent interest.

\begin{theorem}\label{thm:subgaussnorm}
Let $\B{S} \in \R^{m\times n}$ be a random matrix with i.i.d sub-Gaussian entries with zero mean and sub-Gaussian norm $K := \max\limits_{ij} \|s_{ij}\|_{\psi_2}$. Then for any $\B{M} \in \R^{M\times m}$ and $\B{N}\in R^{n\times N}$ 
\begin{equation}\label{eqn:subgaussexpect}
    \expect{\|\B{MSN} \|_2} \leq C_{SNE} K \|\B{M}\|_2 \|\B{N}\|_2   (  \sqrt{\sr(\B{M})}    +  \sqrt{\sr(\B{N})} ).
\end{equation}
Furthermore, for every $u \geq 0 $
\begin{equation}\label{eqn:subgausstail} \|\B{MSN} \|_2 \leq C_{SNT}K\|\B{M}\|_2 \|\B{N}\|_2  ( \sqrt{\sr(\B{M})} + \sqrt{\sr(\B{N})} + u   ),
\end{equation}
with probability at least $1- 2\exp(-u^2)$. Here $C_{SNE}$ and $C_{SNT}$ are absolute constants.
\end{theorem}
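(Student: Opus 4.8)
The plan is to reduce the general statement about $\|\mathbf{MSN}\|_2$ to a statement about the spectral norm of a sub-Gaussian matrix in the right "effective dimensions", which are measured by stable ranks rather than ambient dimensions. First I would recall the fundamental nonasymptotic bound for matrices with i.i.d.\ sub-Gaussian entries (e.g.\ \cite[Theorem 4.4.5]{vershynin2018high}): if $\mathbf{S}\in\R^{m\times n}$ has i.i.d.\ mean-zero sub-Gaussian entries with $\|s_{ij}\|_{\psi_2}\le K$, then $\|\mathbf{S}\|_2 \le CK(\sqrt m + \sqrt n + u)$ with probability at least $1-2\exp(-u^2)$, and correspondingly $\expect{\|\mathbf{S}\|_2}\le C'K(\sqrt m+\sqrt n)$. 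The difficulty is that applying this naively to $\mathbf{MSN}$ would produce $\|\mathbf{M}\|_2\|\mathbf{N}\|_2$ times $(\sqrt m+\sqrt n)$, with the \emph{ambient} dimensions $m,n$, whereas we want the \emph{stable ranks} $\sr(\mathbf{M})$ and $\sr(\mathbf{N})$.

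The key trick is a discretization/$\varepsilon$-net argument carried out directly on the bilinear form $\mathbf{x}^\top \mathbf{MSN}\mathbf{y}$. Write $\|\mathbf{MSN}\|_2 = \sup_{\mathbf{x}\in\mathcal S^{M-1},\,\mathbf{y}\in\mathcal S^{N-1}} (\mathbf{M}^\top\mathbf{x})^\top \mathbf{S}\,(\mathbf{N}\mathbf{y})$. For fixed unit vectors $\mathbf{x},\mathbf{y}$ the quantity $(\mathbf{M}^\top\mathbf{x})^\top\mathbf{S}(\mathbf{N}\mathbf{y}) = \sum_{ij} (\mathbf{M}^\top\mathbf{x})_i s_{ij} (\mathbf{N}\mathbf{y})_j$ is a sum of independent mean-zero sub-Gaussian random variables, hence itself sub-Gaussian with parameter $\lesssim K\|\mathbf{M}^\top\mathbf{x}\|_2\|\mathbf{N}\mathbf{y}\|_2 \le K\|\mathbf{M}\|_2\|\mathbf{N}\|_2$ (using rotational invariance of the sub-Gaussian norm of a sum of independent coordinates, cf.\ \cite[Proposition 2.6.1]{vershynin2018high}). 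So the single-pair tail is controlled. The real content is the union bound over nets of the two spheres: a naive net of $\mathcal S^{M-1}$ has $\log$-cardinality $\asymp M$, which is too big. Instead I would net the \emph{image sets} $\mathbf{M}^\top\mathcal S^{M-1}$ and $\mathbf{N}\mathcal S^{N-1}$ — or equivalently, net the Euclidean balls of radius $\|\mathbf{M}\|_2$ and $\|\mathbf{N}\|_2$ in the column spaces of $\mathbf{M}^\top$ and $\mathbf{N}$ — but the dimensions of those column spaces are $\rank(\mathbf M)$, $\rank(\mathbf N)$, still not stable rank. The sharpened approach that gets stable rank is to observe $\|\mathbf{M}^\top\mathbf{x}\|_2^2 \le \|\mathbf{M}\|_2^2$ always, but also $\expect_{\mathbf{g}}\|\mathbf{M}^\top\mathbf{g}/\sqrt M\|$-type averaging gives $\|\mathbf M\|_F$; concretely, the Gaussian-width / Chevet-inequality route is cleanest: by Chevet's inequality \cite[Theorem 1.1]{...} or \cite[Exercise 8.7.4]{vershynin2018high},
\[
\expect{\|\mathbf{MSN}\|_2} \;\le\; C K\big(\|\mathbf M\|_2\, w(\mathbf N^\top\mathcal S^{N-1}) + \|\mathbf N\|_2\, w(\mathbf M\mathcal S^{M-1})\big),
\]
where $w(\cdot)$ is the Gaussian width, and then bound each Gaussian width by $w(\mathbf M\mathcal S^{M-1}) = \expect\|\mathbf M\mathbf g\|_2 \le (\expect\|\mathbf M\mathbf g\|_2^2)^{1/2} = \|\mathbf M\|_F = \|\mathbf M\|_2\sqrt{\sr(\mathbf M)}$. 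Substituting yields exactly \eqref{eqn:subgaussexpect}.

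For the tail bound \eqref{eqn:subgausstail} I would invoke the high-probability version of Chevet's inequality, or alternatively apply Talagrand's comparison / a bounded-difference-type concentration for the sub-Gaussian process $\mathbf{x}\mapsto \|\mathbf{MSN}\|_2$: the map $\mathbf{S}\mapsto\|\mathbf{MSN}\|_2$ is $\|\mathbf M\|_2\|\mathbf N\|_2$-Lipschitz in the Frobenius norm of $\mathbf{S}$, so by sub-Gaussian concentration of Lipschitz functions of independent sub-Gaussian coordinates \cite[Theorem 5.2.2 / Exercise 5.2.5]{vershynin2018high} (or, more carefully, a version valid without bounded differences using the $\psi_2$ structure), $\|\mathbf{MSN}\|_2$ concentrates around its mean with sub-Gaussian tails of scale $CK\|\mathbf M\|_2\|\mathbf N\|_2$; combining with the mean bound and absorbing constants gives \eqref{eqn:subgausstail} with probability $1-2\exp(-u^2)$. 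The main obstacle I anticipate is getting the \emph{stable rank} rather than rank or ambient dimension to appear — this is exactly where Chevet's inequality (equivalently, the Gaussian-width reformulation of the net argument, bounding width by $\|\cdot\|_F$ via Jensen) is essential, and one must be careful that the comparison inequality applies to a matrix with i.i.d.\ sub-Gaussian (not Gaussian) entries, which requires either citing the sub-Gaussian Chevet inequality directly or running the chaining argument by hand with the single-pair sub-Gaussian estimate above as the increment bound.
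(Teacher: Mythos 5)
Your proposal follows essentially the same route as the paper: the sub-Gaussian Chevet inequality (which the paper reproves via Talagrand's comparison inequality on the process $\langle \B{Su},\B{v}\rangle$ indexed by the image ellipsoids $\B{N}\mc{S}^{N-1}$ and $\B{M}\t\mc{S}^{M-1}$), with the Gaussian widths bounded by $\|\B{M}\|_F$, $\|\B{N}\|_F$ via Jensen so that stable ranks appear, and the tail obtained from the high-probability version with the radius term controlled by the spectral norms. The only caveat is that your alternative tail argument via Lipschitz concentration of $\B{S}\mapsto\|\B{MSN}\|_2$ does not hold dimension-free for general (unbounded, non-Gaussian) sub-Gaussian entries, but since you primarily invoke the tail form of Chevet/Talagrand — exactly what the paper uses — the proof goes through as you outline it.
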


The proof of this theorem is an application of Talagrand's comparison inequality which we recall here and is a special case of the sub-Gaussian Chevet theorem~\cite[Theorem 8.7.1]{vershynin2018high}. However, in the Supplementary Materials (Section~\ref{ssec:rmbounds}) we give some additional details and track the constants from Talagrand's comparison inequality. We will also need the following result which is a two-sided bound on the singular values of sub-Gaussian random matrices.
\begin{theorem}\label{thm:twosided}
    Let $\mat{S} \in \R^{m\times n}$ whose rows $\vec{e}_i^\top\mat{S}$ are independent, mean zero, isotropic, sub-Gaussian random vectors in $\R^n$ with $\max_{1 \leq i \leq m}\|\mat{S}^\top\vec{e}_i \|_{\psi_2} \leq K$. Then, for any $t\geq 0$, we have 
    \[ \sqrt{m} - C_{T}K^2(\sqrt{n} + t) \leq \sigma_n(\mat{S}) \leq \sigma_1(\mat{S}) \leq \sqrt{m} - C_{T}K^2(\sqrt{n} + t).\]
\end{theorem}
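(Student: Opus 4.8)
The plan is to follow the classical $\varepsilon$-net plus Bernstein argument for two-sided singular value bounds of matrices with independent sub-Gaussian rows (see, e.g., \cite[Theorem 4.6.1]{vershynin2018high}), phrased as a deviation estimate for the scaled Gram matrix $\frac{1}{m}\mat{S}\t\mat{S}$. The target is to show that, for any $t\geq 0$, with probability at least $1-2\exp(-ct^2)$,
\[ \Bigl\| \tfrac{1}{m}\mat{S}\t\mat{S} - \mat{I}_n \Bigr\|_2 \;\leq\; \max\{\theta,\theta^2\}, \qquad \theta := CK^2\Bigl(\sqrt{n/m} + t/\sqrt{m}\Bigr). \]
The theorem then follows by translating to singular values: if the Gram deviation is at most $\theta' := \max\{\theta,\theta^2\}$, then $1-\theta' \leq \sigma_n^2(\mat{S})/m \leq \sigma_1^2(\mat{S})/m \leq 1+\theta'$, and the elementary inequalities $\sqrt{1-x}\geq 1-x$, $\sqrt{1+x}\leq 1+x$ yield $\sqrt{m}-C_TK^2(\sqrt{n}+t) \leq \sigma_n(\mat{S}) \leq \sigma_1(\mat{S}) \leq \sqrt{m}+C_TK^2(\sqrt{n}+t)$ after absorbing absolute constants into $C_T$.

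For the Gram deviation estimate I would proceed in three standard steps. \emph{Reduction to a net.} Since $\frac{1}{m}\mat{S}\t\mat{S}-\mat{I}_n$ is symmetric, a $\tfrac{1}{4}$-net $\mc{N}\subset\mc{S}^{n-1}$ with cardinality $|\mc{N}|\leq 9^n$ satisfies $\|\frac{1}{m}\mat{S}\t\mat{S}-\mat{I}_n\|_2 \leq 2\max_{\vec{x}\in\mc{N}}\bigl|\inner{(\frac{1}{m}\mat{S}\t\mat{S}-\mat{I}_n)\vec{x}}{\vec{x}}\bigr|$. \emph{Concentration for fixed $\vec{x}$.} For $\vec{x}\in\mc{S}^{n-1}$,
\[ \inner{(\tfrac{1}{m}\mat{S}\t\mat{S}-\mat{I}_n)\vec{x}}{\vec{x}} = \frac{1}{m}\sum_{i=1}^m\bigl(\inner{\mat{S}\t\vec{e}_i}{\vec{x}}^2-1\bigr), \]
which is a sum of $m$ independent terms that are centered by isotropy of the rows; since $\inner{\mat{S}\t\vec{e}_i}{\vec{x}}$ is sub-Gaussian with norm at most $K$, its square is sub-exponential with $\psi_1$-norm at most $CK^2$, so Bernstein's inequality for independent sub-exponential variables gives
\[ \prob{\Bigl|\tfrac{1}{m}\sum_{i=1}^m\bigl(\inner{\mat{S}\t\vec{e}_i}{\vec{x}}^2-1\bigr)\Bigr|\geq s} \leq 2\exp\bigl(-c\,m\min\{s^2/K^4,\,s/K^2\}\bigr). \]
\emph{Union bound.} Taking $s=\theta/2$ makes the exponent at least $c'(n+t^2)$, which dominates $\log|\mc{N}| = n\log 9$ up to constants; a union bound over $\mc{N}$ together with the net reduction then delivers the operator-norm bound with an enlarged absolute constant, on an event of probability at least $1-2\exp(-c''t^2)$.

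The one place that requires care is the bookkeeping between the two Bernstein regimes: $\min\{s^2/K^4,s/K^2\}$ produces $\max\{\theta,\theta^2\}$ rather than just $\theta$, which is why the clean linear bound $\sqrt{m}\pm C_TK^2(\sqrt{n}+t)$ is the one extracted in the regime $\theta\leq1$ (equivalently, $m$ at least a constant multiple of $K^4n$) --- precisely the regime in which this theorem is invoked in the paper, where $\ell$ is taken of order $K^4k$ or larger. The remaining ingredients --- the covering number $|\mc{N}|\leq9^n$, the symmetric-matrix net inequality, the fact that the square of a sub-Gaussian variable is sub-exponential, and Bernstein's inequality --- are all standard and directly quotable from \cite{vershynin2018high}, so there is no genuine obstacle beyond this regime check and the (routine) translation from the Gram deviation to singular values.
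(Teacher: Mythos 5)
Your argument is the standard net-plus-Bernstein proof of the two-sided singular value bound, which is in effect what the paper relies on: its ``proof'' of this theorem is a bare citation to Vershynin's Theorem 4.6.1, and that theorem is proved by exactly the reduction you describe (quadratic form on a $\tfrac14$-net, sub-exponential Bernstein for $\inner{\mat{S}^\top\vec{e}_i}{\vec{x}}^2-1$, union bound). So in approach you coincide with the source; nothing is different in substance.

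There is, however, one concrete gap between what your write-up establishes and the statement as given. In the final translation you pass from $\|\tfrac1m\mat{S}^\top\mat{S}-\mat{I}_n\|_2\le\max\{\theta,\theta^2\}$ to the singular values via $\sqrt{1\pm x}\lessgtr 1\pm x$ with $x=\max\{\theta,\theta^2\}$. When $\theta>1$ (i.e.\ $m$ is not at least a constant multiple of $K^4n$) this only gives $\sigma_1(\mat{S})\le\sqrt{m}+\sqrt{m}\,\theta^2$, which is of order $K^4(\sqrt{n}+t)^2/\sqrt{m}$ and is not dominated by $CK^2(\sqrt{n}+t)$; yet the theorem is stated unconditionally, for every $t\ge 0$ and all $m,n$, and your explicit regime restriction makes your version strictly weaker than the claim. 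The fix is standard and costs nothing: replace the crude square-root step by the approximate-isometry lemma (Vershynin, Lemma 4.1.5), namely that for $z\ge 0$ the implication $|z^2-1|\le\max\{\delta,\delta^2\}\Rightarrow|z-1|\le\delta$ holds; applying it with $z=\sigma_i(\mat{S})/\sqrt{m}$ and $\delta=CK^2\bigl(\sqrt{n/m}+t/\sqrt{m}\bigr)$ yields $|\sigma_i(\mat{S})-\sqrt{m}|\le CK^2(\sqrt{n}+t)$ for all parameters, with no regime check (even though, as you correctly observe, the paper only ever invokes the theorem with $\ell\gtrsim K^4k$, where your restricted version already suffices). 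One further remark: the inequality as printed in the paper contains a sign typo --- the rightmost bound should read $\sqrt{m}+C_TK^2(\sqrt{n}+t)$ --- and your conclusion correctly carries the plus sign.
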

\begin{proof}
    See~\cite[Theorem 4.6.1.]{vershynin2010introduction}.
\end{proof}

With the results of~\Cref{thm:subgaussnorm} at hand, we are ready to prove~\Cref{thm:subgauss}.
\begin{proof}[Proof of~\cref{thm:subgauss}]
We will carry out the proof in three steps:
\paragraph{Step 1. {Bounding} $\|\Oh_1^\dagger\|_2$} Since the columns of $\Oh_1 =  \mat{V}_k^\top \mat\Omega \in \R^{k \times \ell}$ are orthonormal, they
are also independent isotropic random vectors. Furthermore, by~\Cref{lem:subgauss} their sub-Gaussian norm satisfy $\|\Oh_1\vec{e}_j\|_{\Psi_2} \leq \sqrt{C_{R}}K_E$ \ for \ $1\leq j \leq \ell$. Applying~\Cref{thm:twosided} to matrix $\mat\Omega_1^\top$ gives
\[ \prob{ \sqrt{\ell} - C_{T}C_RK_E^2(\sqrt{k} +t  ) \leq \sigma_k(\Oh_1) }  \leq 2 \exp(-t^2).   \]
Setting $\delta/2 = 2 \exp(-t^2)$ yields $t = V_\delta = \sqrt{\log\frac{4}{\delta}}$. Hence, with a probability of failure {at most $\delta/2$}
\[ \sigma_k(\Oh_1) \geq \sqrt{\ell} -C_{ES}K_E^2(\sqrt{k} +V_\delta ), \] 
with $C_{ES} := C_TC_R$.
Now, take  $\sqrt{\ell} = C_{ES}K_E^2 (\sqrt{k} + {V_\delta})/ \varepsilon = \mc{O}(K_E^2 \sqrt{k})$. Then, with probability at {least $1-\delta/2$}, $\sigma_k(\mathat{\Omega}_1) > 0$ and $$\|\Oh_1^\dagger\|_2 = \frac{1}{\sigma_k(\Oh_1)} \leq  \frac{1}{ (\varepsilon^{-1} - 1)C_{ES}K_E^2 (\sqrt{k} + {V_\delta})}  = \frac{1}{(1-\varepsilon) \sqrt{\ell}}, $$ which gives \begin{equation}\label{eqn:subgauss1} \prob{\|\Oh_1^\dagger\|_2 > \frac{1}{(1-\varepsilon)\sqrt{\ell}}}\leq {\frac{\delta}{2}}. \end{equation} 
 \paragraph{Step 2. {Bounding} $\|\mat\Sigma_\perp \Oh_2\Oh_1^\dagger\|_2$} 
 {Let $E$ denote the event that $\|\Oh_1^\dagger\|_2 \leq \frac{1}{(1-\varepsilon)\sqrt{\ell}}$; from Step 1, $\prob{E^c } \leq \delta/2$. Condition on the event $E$ and apply~\Cref{thm:subgaussnorm}} with $\B{M} = \mat{Z} \mat\Omega $ and $\B{N} = \Oh_1^\dagger$, where $\mat{Z} = \mat\Sigma_\perp \mat{V}_\perp^\top$. Note that this theorem applies since $\mat\Omega$ has i.i.d mean zero sub-Gaussian random variables and $\mat{Z\Omega} \Oh_1^\dagger = \mat\Sigma_\perp \Oh_2\Oh_1^\dagger$.  We get 
 
 \[ \prob{  \|\B\Sigma_\perp\Oh_2 \Oh_1^\dagger \|_2 > C_{SNE}K_E \|\B{Z}\|_2 \|\Oh_1^\dagger\|_2  ( \sqrt{\sr(\B{Z})} + \sqrt{\sr(\Oh_1^\dagger)} + u  \ )  \ | \ E } \leq 2e^{-u^2}. \] 
 Using the deterministic bound $ \|\Oh_1^\dagger\|_F \leq \sqrt{k} \|\Oh_1^\dagger\|_2$, and the bound on $\|\Oh_1^\dagger\|_2$ from \textit{Step 1} under the event $E$, we get 
 \[ \prob{  \|\B\Sigma_\perp\B{\Oh}_2 \B{ \Oh}_1^\dagger \|_2 > C_{SNE}K_E\frac{\|\B{Z}\|_2}{(1-\varepsilon)\sqrt\ell}  ( \sqrt{\sr(\B{Z})} + \sqrt{k} + u  ) \ | \ E   } \leq 2e^{-u^2}. \]
 By the submultiplicativity of $\|\cdot\|_\xi$, where $\xi \in \{2,F\}$ and the properties of right singular vectors $\mat{V}_\perp$, we obtain  $\| \B{Z}\|_{\xi} \leq   \|\B\Sigma_\perp\|_{\xi}$. Thus,
  \begin{equation}\label{eqn:subgauss2} \prob{  \|\B\Sigma_\perp \Oh_2 \Oh_1^\dagger \|_2 > C_{SNE}K_E   \frac{\|\B\Sigma_\perp\|_2}{(1-\varepsilon)\sqrt\ell} ( \sqrt{\sr(\B\Sigma_\perp)} + \sqrt{k} + u  ) \ | \ E } \leq 2e^{-u^2}. \end{equation}
 \paragraph{Step 3. Combining intermediate steps} Setting $\delta/2 = 2e^{-u^2}$, solving for $u$, substituting into~\eqref{eqn:subgauss2}, and removing the conditioning using~\eqref{eqn:subgauss1}    completes the proof for $\|\mat\Sigma_\perp\Oh_2\Oh_1^\dagger\|_2$ {with $C_{ES} :=C_TC_R$ \ and \ $C_{EB}:= C_{SNE}$}. The use of~\cref{thm:StrBound} completes the proof.
\end{proof}

\subsection{Independent sub-Gaussian columns}
\label{subsec:sub-Gaussian-col}

The next theorem only requires the columns of the sub-Gaussian random matrix to be independent, i.e., it doesn't require the individual matrix entries to be independent.

\begin{theorem}\label{thm:subgaussnorm2}
    Let $\B{S} \in \R^{m\times n}$ be a random matrix with independent sub-Gaussian isotropic columns that have sub-Gaussian norm $K:= \max\limits_{1 \leq j \leq \ell } \|\B{Se}_j\|_{\psi_2}$. Then for any $\B{M} \in \R^{M\times m}$ 
\begin{equation}\label{eqn:subgaussexpect2}
      \expect{\|\B{MS}\|_2} \leq \sqrt{n}\|\B{M}\|_2 +      C_{MDE}K^2 { \|\B{M}\|_F}    .
\end{equation}
Furthermore, for every $u \geq 0 $
\begin{equation}\label{eqn:subgausstail2} \|\B{MS}\|_2 \leq \sqrt{n} \|\B{M}\|_2 +  C_{MDT}K^2 [{\|\B{M}\|_F} + u \|\B{M}\|_2],     \end{equation}
with probability at least $1- 2\exp(-u^2)$.
Here {$C_{MDE}, C_{MDT}$} are absolute constants defined in \cref{thm:matdev}.
\end{theorem}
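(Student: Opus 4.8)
\textbf{Proof proposal for Theorem~\ref{thm:subgaussnorm2}.}
The plan is to recognize $\mat{MS}$ as a \emph{matrix with independent sub-Gaussian rows} (after transposing) and to apply the matrix deviation inequality that controls the difference between $\|\cdot\|_2$-type quantities and their expectations for such matrices. Concretely, write $\mat{S} = [\,\vec{s}_1 \mid \cdots \mid \vec{s}_n\,]$ with the $\vec{s}_j \in \R^m$ independent, isotropic, sub-Gaussian, and $\|\vec{s}_j\|_{\psi_2}\le K$. Then $\mat{MS}$ has columns $\mat{M}\vec{s}_j$, so $(\mat{MS})\t$ has \emph{independent rows} $\vec{s}_j\t\mat{M}\t$. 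The operator norm is $\|\mat{MS}\|_2 = \sup_{\vec{y}\in\mc{S}^{N-1}}\|\mat{S}\t\mat{M}\t\vec{y}\|_2$ — wait, more usefully, $\|\mat{MS}\|_2 = \sup_{\vec{u}\in\mc{S}^{M-1},\,\vec{v}\in\mc{S}^{n-1}}\vec{u}\t\mat{M}\mat{S}\vec{v}$. I would instead index directly: $\|\mat{MS}\|_2 = \sup_{\vec{v}\in\mc{S}^{n-1}}\|\mat{MS}\vec{v}\|_2$. This is precisely the kind of supremum-of-a-process quantity handled by Vershynin's matrix deviation inequality (the cited \cref{thm:matdev}, i.e.\ \cite[Theorem 9.1.1]{vershynin2018high} and its corollaries on \cite[Section 4.6]{vershynin2010introduction}).

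First I would set $T = \mat{M}\t(\mc{S}^{M-1})\subset\R^m$, or rather apply the matrix deviation inequality to the matrix $\mat{S}\t$ whose rows are the $\vec{s}_j\t$: for a bounded set $T\subset\R^m$, the inequality states that with probability at least $1-2e^{-u^2}$,
\[
\sup_{\vec{x}\in T}\Bigl|\,\|\mat{S}\t\vec{x}\|_2 - \sqrt{n}\,\|\vec{x}\|_2\,\Bigr| \le C K^2\bigl(\gamma(T) + u\cdot\rad(T)\bigr),
\]
where $\gamma(T)$ is the Gaussian width and $\rad(T)$ the radius. Taking $T = \mat{M}\t\mc{S}^{M-1}$, we have $\sup_{\vec{x}\in T}\|\mat{S}\t\vec{x}\|_2 = \|\mat{S}\t\mat{M}\t\|_2 = \|\mat{MS}\|_2$, $\sup_{\vec{x}\in T}\sqrt{n}\|\vec{x}\|_2 = \sqrt{n}\|\mat{M}\|_2$, $\rad(T) = \|\mat{M}\|_2$, and the key step is the bound $\gamma(\mat{M}\t\mc{S}^{M-1}) \le \|\mat{M}\|_F$, which follows from $\gamma(T) = \expect{\sup_{\vec{u}\in\mc{S}^{M-1}}\vec{g}\t\mat{M}\t\vec{u}} = \expect{\|\mat{M}\vec{g}\|_2} \le (\expect{\|\mat{M}\vec{g}\|_2^2})^{1/2} = \|\mat{M}\|_F$ for $\vec{g}\sim\mc{N}(\vec{0},\mat{I})$ and Jensen's inequality. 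Combining these with the triangle inequality — $\|\mat{MS}\|_2 \le \sqrt{n}\|\mat{M}\|_2 + |\,\|\mat{MS}\|_2 - \sqrt{n}\|\mat{M}\|_2\,|$ — gives the tail bound~\eqref{eqn:subgausstail2} with $C_{MDT} = C$; integrating the tail (or using the expectation form of the matrix deviation inequality directly) gives~\eqref{eqn:subgaussexpect2} with $C_{MDE}$ the corresponding absolute constant.

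I expect the main obstacle to be mostly bookkeeping rather than a genuine difficulty: one must verify that the hypotheses of the matrix deviation inequality are met (independent isotropic sub-Gaussian rows of $\mat{S}\t$, with the right sub-Gaussian norm), that the Gaussian-width estimate $\gamma(\mat{M}\t\mc{S}^{M-1}) = \expect{\|\mat{M}\vec{g}\|_2} \le \|\mat{M}\|_F$ is applied with the correct normalization of the width functional, and that the absolute constants $C_{MDE}, C_{MDT}$ are exactly those appearing in \cref{thm:matdev}. A minor subtlety worth flagging is that the matrix deviation inequality requires the index set to be a subset of a sphere (or at least bounded), and the version giving the $u\cdot\rad(T)$ term in the deviation must be invoked rather than the cleaner $\gamma(T)$-only version; I would cite the precise form in the Supplementary Materials where \cref{thm:matdev} is stated, and note that when $\mat{M}$ is rank-deficient the bound $\sqrt{\sr(\mat{M})} = \|\mat{M}\|_F/\|\mat{M}\|_2$ makes the second term genuinely smaller than the naive $\sqrt{m}$ scaling, which is the whole point of the statement.
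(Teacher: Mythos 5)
Your proposal is correct and follows essentially the same route as the paper's proof: apply the matrix deviation inequality (\cref{thm:matdev}) to $\mat{S}\t$ over the set $\mc{T}=\mat{M}\t\mc{S}^{M-1}$, bound $\gamma(\mc{T})\leq\|\mat{M}\|_F$ and $\rad(\mc{T})\leq\|\mat{M}\|_2$, and combine via the triangle inequality for both the expectation and tail bounds. The only cosmetic difference is that the paper invokes its \cref{lem:wT}-style argument for the width bound and uses the expectation form of \cref{thm:matdev} directly rather than integrating the tail, which you also note as an option.
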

Note that if $\B{M} = \B{I}$ we recover the results of~\cite[Theorem 4.6.1]{vershynin2018high} and \cite[Exercise 4.6.3]{vershynin2018high}. The proof relies on the matrix deviation inequality and is relegated to the Supplementary Materials (Section~\ref{ssec:rmbounds}).

Now, analogous to the proof of~\Cref{thm:subgauss}, we can conduct
the proof of~\Cref{thm:subgauss2}.

\begin{proof}[Proof of \cref{thm:subgauss2}]
The proof is similar to the proof of \cref{thm:subgauss}, but it uses the results of \cref{thm:subgaussnorm2} instead of \cref{thm:subgaussnorm}. First, we apply submultiplicativity to obtain \begin{equation}\label{eqn:subgaussinter2}\|\mat\Sigma_\perp \Oh_2\Oh_1^{\dagger}\|_2 \leq \|\mat\Sigma_\perp \Oh_2\|_2 \|\Oh_1^\dagger\|_2  .\end{equation}
The bound for the low-rank approximation follows from the bound for $\|\mat\Sigma_\perp \Oh_2\Oh_1^{\dagger}\|_2$.
\paragraph{Step 1: Bounding $\|\Oh_1^\dagger\|_2$} The columns of $\B\Omega \in \R^{n \times \ell}$ are independent sub-Gaussian with norm $\|\mat\Omega\vec{e}_j\|_{\psi_2}\leq K_C$. Thus, by \cref{lem:subgauss2}, the columns of $\Oh_1$ are independent, isotropic, and sub-Gaussian with norm $\|\Oh_1 \vec{e}_j\|_{\psi_2} \leq \sqrt{C_R}K_C $ for $1 \leq j \leq \ell$. Analogously to the proof of \cref{thm:subgauss}, with $\ell = C_{CS} C_RK_C^2\varepsilon^{-2} (\sqrt{k}+ V_\delta)^2$,  we once again obtain~\eqref{eqn:subgauss1}. Here $C_{CS} := C_TC_R$, where $C_T$ is the constant in \Cref{thm:twosided} and $C_R$ is the constant in \cref{lem:subgauss2}. 
\paragraph{Step 2: Bounding $\|\mat\Sigma_\perp \Oh_2\|_2$} By a similar argument, the columns of $\Oh_2$ are independent, isotropic, sub-Gaussian with sub-Gaussian norm at most $C_RK_C$. Applying \cref{thm:subgaussnorm2} with $\B{M} = \B\Sigma_\perp$, and setting $2\exp(-u^2) = \delta/2$ yields
\begin{equation}\label{eqn:subgauss2alt} \prob{\|\mat\Sigma_\perp \Oh_2\|_2 >  \|\mat\Sigma_\perp\|_2  \left( \sqrt{\ell}  + C_{MDT}C_RK_C^2  (\sqrt{\sr(\mat\Sigma_\perp)}  + V_\delta) \right) } \leq \delta/2. \end{equation}
\paragraph{Step 3: Combining the steps} Combining~\eqref{eqn:subgauss2alt} and~\eqref{eqn:subgauss1} using the union bound, and inserting into~\eqref{eqn:subgaussinter2} completes the proof with {$C_{CB} := C_{MDT}C_R$}.
\end{proof}

\subsection{Independent bounded columns}
\label{subsec:bounded-col}

\begin{proof}[Proof of \cref{thm:genindep}]
It suffices to bound $\|\mat\Sigma_\perp\Oh_2\Oh_1^\dagger\|_2.$ The proof proceeds as before in three steps. \paragraph{Step 1: Bounding $\|\Oh_1^\dagger\|_2$}
Define the matrices $\mat{X}_j = (\mat{V}_k^\top\mat\Omega\vec{e}_j) (\mat{V}_k^\top\mat\Omega\vec{e}_j)^\top$ for $1 \leq j \leq \ell$. Then $\mat{X}_j$ are positive semidefinite with $\lambda_{\max}(\mat{X}_j) = \|\mat{V}_k^\top\mat\Omega\vec{e}_j\|_2^2 \leq K_k^2$ \ and \ $\expect{\mat{X}_j} = \mat{I}_k$. Therefore, $\lambda_{\min}(\expect{\sum_{j=1}^\ell \mat{X}_j}) = \lambda_{\min}(\ell \mat{I}) = \ell$. By the matrix Chernoff inequality~\cite[Theorem 5.1.1]{tropp2015introduction} for some parameter $\varepsilon\in {(0,1)}$
\begin{equation}
\label{eq:GICChernoff}
\prob{\lambda_{\min}(\sum_{j=1}^\ell \mat{X}_j) \leq (1-\varepsilon)\ell} \leq k f(-\varepsilon)^{\ell/K_k},
\end{equation}
where $f(\varepsilon) = e^{\varepsilon}/(1+\varepsilon)^{(1+\varepsilon)}$. Since $\sum\limits_{j=1}^\ell \mat{X}_j = \Oh_1\Oh_1^\top$ and ${f(-\varepsilon)}\leq e^{-\varepsilon^2/2}$, the Chernoff bound \eqref{eq:GICChernoff} takes the form 
$ \prob{\|\Oh_1^\dagger\|_2^2 \geq \frac{1}{(1-\varepsilon)\ell}} \leq ke^{-\varepsilon^2 \ell/(2K_k^2)}$. 
Setting $\delta/2 = ke^{-\varepsilon^2 \ell/(2K_k^2)}$ and solving for $\ell$ yields $\ell = 2K_k^2\log(2k/\delta)/\varepsilon^2$. This gives the first probabilistic bound
\begin{equation}\label{eqn:genbound1}
\prob{\|\Oh_1^\dagger\|_2 \geq \frac{1}{\sqrt{(1-\varepsilon)\ell}}} \leq \delta/2.
\end{equation}
\paragraph{Step 2: Bounding $\|\mat\Sigma_\perp \Oh_2\|_2$} Write $\mat{Z} = \mat\Sigma_\perp \Oh_2$, so that its columns take the form $\vec{z}_j = \mat\Sigma_\perp \mat{V}_\perp^\top (\mat\Omega\vec{e}_j) $ for $1\leq j \leq \ell $ and are independent random vectors. Define the matrices
 \[ \mat{Y} :=  \sum_{j  = 1}^\ell \mat{X}_j, \quad \mbox{ with } \quad \mat{X}_j := \frac{1}{\ell}(\vec{z}_j\vec{z}_j^\top - \mat\Sigma_\perp \mat\Sigma_\perp^\top).\]
 Therefore, $\mat{Y} = \frac{1}{\ell}\mat Z \mat Z^\top - \mat \Sigma_\perp \mat \Sigma_\perp^\top$ is a sum of independent random symmetric matrices. Furthermore, the matrices $\mat{X}_j$ have zero mean, so by the linearity of expectations, $\mat{Y}$ also has zero mean. Next, by the triangle inequality and the assumption on $\mat\Omega$ and $K_\perp$
 \[ \|\mat{X}_j\|_2 \leq \frac{1}{\ell} \|\mat\Sigma_\perp\|_2^2 (K_\perp^2 + 1) \leq \frac{2K_\perp^2}{\ell} \|\mat\Sigma_\perp\|_2^2 =: L  .  \]
 For the variance term, once again by the linearity of expectation, we obtain
 \[ \begin{aligned} \text{Var}[\mat{Y}] = & \> \sum_{j=1}^\ell \expect{\mat{X}_j^2 } =   \frac{1}{\ell^2} \sum_{j=1}^\ell  \expect{ (\vec{z}_j\vec{z}_j^\top - \mat\Sigma_\perp \mat\Sigma_\perp^\top)^2 } \\
 = & \> \frac{1}{\ell^2} \sum_{j=1}^\ell  \left[ \expect{ (\vec{z}_j\vec{z}_j^\top)^2 } - (\mat\Sigma_\perp \mat\Sigma_\perp^\top )^2 \right] \\
 \preceq & \> \frac{1}{\ell^2} \sum_{j=1}^\ell   \mat\Sigma_\perp\mat\Sigma_\perp\t \left[ K_\perp^2\|\mat\Sigma_\perp\|_2^2  \mat{I}   - \mat\Sigma_\perp\mat\Sigma_\perp^\top \right] \preceq \frac{K_\perp^2\|\mat\Sigma_\perp\|_2^2 }{\ell} \mat\Sigma_\perp\mat\Sigma_\perp^\top = :\mat{V}.
 \end{aligned}\]
 Note that in the last step we have used $(\vec{z}_j\vec{z}_j\t)^2 \preceq \|\vec{z}_j\|_2^2 \mat\Sigma_\perp\mat\Sigma_\perp\t \preceq K_\perp^2 \|\mat\Sigma_\perp\|_2^2 \vec{z}_j\vec{z}_j\t$. 
 The matrix $\mat{V}$ satisfies $ \|\mat{V}\|_2 = {K_\perp^2\|\mat\Sigma_\perp\|_2^4}/{\ell} =:  \nu$ and $$d := \intdim(\mat{V}) = \trace(\mat\Sigma_\perp\mat\Sigma_\perp\t)/ \|\mat\Sigma_\perp\|_2^2 = \sr(\mat\Sigma_\perp).$$ Therefore, by matrix Bernstein inequality~\cite[Theorem 7.7.1] {tropp2015introduction} for $t \geq \sqrt{\nu} + L/3$
 \[ \prob{ \|\frac{1}{\ell}\mat{ZZ}^\top - \mat\Sigma_\perp^2 \|_2 \geq t  } \leq 8d\exp\left(-\frac{t^2}{\nu + Lt/3} \right).   \]
 Note that when using~\cite[Theorem 2.3]{hallman2022monte}, the only requirement on $t$ is that it is nonnegative. Then,
 setting $\delta/2 = 8d\exp\left(-\frac{t^2}{\nu + Lt/3} \right) $, and solve for $t$ to get $t = \frac{L\beta_\delta}{6} + \sqrt{\frac{L^2 \beta_\delta^2}{36} + \beta_\delta \nu}$ where $\beta_\delta =\log \frac{16d}{\delta}$. By triangle inequality, and completion of the squares, \[ \frac1\ell\|\mat{Z}\|_2^2 \leq \| \mat\Sigma_\perp\|_2^2 + t \leq  \| \mat\Sigma_\perp\|_2^2 + \frac{L\beta_\delta}{3} + \frac{3\nu}{L}= \|\mat\Sigma_\perp\|_2^2 \left(\frac{2K_\perp^2 \beta}{3\ell} + \frac{5}{2} \right).\]
 Therefore, $\|\mat\Sigma_\perp\Oh_2\|_2 \leq   \| \mat\Sigma_\perp\|_2(K_\perp^2 \beta + 3 \ell)^{1/2}$ with probability at least $1-\delta/2$.

 \paragraph{Step 3: Combining the bounds} Using the union bound to combine the results from Steps 1 and 2 completes the proof.
\end{proof}

\subsection{Independent columns with bounded moments} \label{subsec:bounded-col-exp}
Before we prove \cref{thm:indepinexp}, we require the following non-commutative matrix Khintchine inequality. We briefly recall the Schatten-p norm~\cite[Equation (IV.31)]{bhatia1997matrix} 
\[
\|\mat{A}\|_{(p)} =\left(\trace(\mat{A}\t\mat{A})^{p/2}\right)^{1/p} =\left( \sum_{j=1}^{\min\{m,n\}}\sigma_j(\mat{A})^p\right)^{1/p}
\]
for $p \in [1,\infty)$.
A special case mentioning is that the Schatten-2 norm is the Frobenius norm. 
\begin{theorem}[Non-commutative Khintchine inequality]\label{thm:khintchine}
    Let $\mat{A}_i$ for $1 \leq i \leq N$ be a sequence of symmetric deterministic matrices and let $\epsilon_1,\dots, \epsilon_N$ be independent Rademacher random variables. Then {for $p \in [1,\infty)$} 
    \[ \left(\expect{ \left\|\sum_{i=1}^N\epsilon_i \mat{A}_i \right\|_{(p)}^p}\right)^{1/p}\leq \sqrt{p}\left\|\left(\sum_{i=1}^N\mat{A}_i^2\right)^{1/2}\right\|_{(p)}.  \]
\end{theorem}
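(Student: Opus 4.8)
The plan is to establish the statement first for even integer exponents $p=2q$ by the moment (trace) method, and then to pass to arbitrary real $p\in[1,\infty)$ by complex interpolation; this is essentially the classical route of Lust-Piquard and Pisier, so one may alternatively just cite their result with the explicit $\sqrt p$ constant (available, e.g., from Buchholz or from the exchangeable-pairs argument of Mackey--Jordan--Chen--Farrell--Tropp).

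\textbf{Even exponents.} Let $\mat{X}=\sum_{i=1}^N\epsilon_i\mat{A}_i$, which is symmetric, so that $\|\mat{X}\|_{(2q)}^{2q}=\trace(\mat{X}^{2q})$. Expanding the power and using independence of the $\epsilon_i$, a term in
\[
\expect{\|\mat{X}\|_{(2q)}^{2q}}=\sum_{i_1,\dots,i_{2q}=1}^N \expect{\epsilon_{i_1}\cdots\epsilon_{i_{2q}}}\,\trace(\mat{A}_{i_1}\cdots\mat{A}_{i_{2q}})
\]
survives (with coefficient $1$) only when every index value occurs an even number of times. Each such tuple admits at least one perfect matching $\pi$ of the positions $\{1,\dots,2q\}$ whose bonds join only equal indices; hence, bounding traces in absolute value and overcounting,
\[
\expect{\|\mat{X}\|_{(2q)}^{2q}}\ \le\ \sum_{\pi}\ \sum_{(i_1,\dots,i_{2q})\ \text{compatible with}\ \pi}\bigl|\trace(\mat{A}_{i_1}\cdots\mat{A}_{i_{2q}})\bigr|,
\]
the outer sum running over all $(2q-1)!!$ perfect matchings. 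The key estimate is that, for every fixed $\pi$, the inner sum is at most $\trace\bigl[(\sum_{i}\mat{A}_i^2)^{q}\bigr]$; this follows from a standard iterated-Cauchy--Schwarz argument (repeatedly splitting the cyclic word $\mat{A}_{i_1}\cdots\mat{A}_{i_{2q}}$ at bonds of $\pi$, using $|\trace(\mat{U}\mat{V})|\le\|\mat{U}\|_{(2)}\|\mat{V}\|_{(2)}$ together with positivity of the map $\mat{B}\mapsto\sum_i\mat{A}_i\mat{B}\mat{A}_i$), and is the combinatorial core of Wigner-type trace bounds. Since $(2q-1)!!\le(2q)^{q}$ and $\trace[(\sum_i\mat{A}_i^2)^q]=\|(\sum_i\mat{A}_i^2)^{1/2}\|_{(2q)}^{2q}$, we get the claim with constant $\sqrt{2q}=\sqrt p$. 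For $q=1$ every inequality used is an equality, recovering the $p=2$ case with constant $1$.

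\textbf{General exponents.} For real $p\ge 2$ one cannot merely round up to the next even integer $2\lceil p/2\rceil$ without inflating the constant to roughly $\sqrt{p+2}$; to keep the sharp $\sqrt p$, and to cover $1\le p<2$ as well, I would interpolate. Viewing $\mat{X}$ as an element of the non-commutative $L^p$ space $L^p(\Omega)\otimes S^p$ and recalling that these form a complex interpolation scale, one sets up a Stein-type analytic family of operators whose boundary values on the lines $\mathrm{Re}\,z\in\{1/p_0,1/p_1\}$ encode the even-integer bounds just proved (and, for $p<2$, the $p=2$ identity); Hadamard's three-lines lemma then delivers the bound with the stated constant at every intermediate $p$.

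\textbf{Main obstacle.} The work is concentrated in two places: (i) the per-matching estimate, where the crossing matchings are the delicate case and the iterated Cauchy--Schwarz must be organized so that it terminates cleanly in $\trace[(\sum_i\mat{A}_i^2)^q]$; and (ii) the interpolation step for non-even $p$ (in particular $1\le p<2$), which is the analytic heart of the Lust-Piquard--Pisier theorem. If only even exponents are needed downstream --- as is typically the case when one ultimately wants a tail bound via Markov's inequality applied to $\|\mat{X}\|_{(p)}^p$ --- step (ii) can be dropped and the argument becomes entirely elementary.
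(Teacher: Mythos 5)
Your even-exponent step contains a genuine error, not just an omitted technicality. After discarding the Rademacher expectation you pass to term-wise absolute values and claim that, for each fixed matching $\pi$, the sum of $\bigl|\trace(\mat{A}_{i_1}\cdots\mat{A}_{i_{2q}})\bigr|$ over compatible tuples is at most $\trace\bigl[(\sum_i\mat{A}_i^2)^{q}\bigr]$. That statement is false already for $q=3$ and the \emph{non-crossing} matching, where the inner sum is $\sum_{i,j,k}\bigl|\trace(\mat{A}_i^2\mat{A}_j^2\mat{A}_k^2)\bigr|$ while $\trace\bigl[(\sum_i\mat{A}_i^2)^{3}\bigr]=\sum_{i,j,k}\trace(\mat{A}_i^2\mat{A}_j^2\mat{A}_k^2)$: the trace of a product of three positive semidefinite matrices can be negative (e.g.\ $\mat{P}=\mathrm{diag}(1,0)$, $\mat{Q}$ the all-ones $2\times2$ matrix, $\mat{R}=\bigl[\begin{smallmatrix}0.1 & -0.5\\ -0.5 & 3\end{smallmatrix}\bigr]$ give $\trace(\mat{P}\mat{Q}\mat{R})=-0.4$), so the sum with absolute values strictly exceeds the right-hand side. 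The correct per-pairing lemma (Buchholz) bounds the absolute value of the \emph{sum}, not the sum of absolute values; but your reduction from tuples with multiplicities $\geq 4$ to pairings via ``overcounting'' is exactly what forces the absolute values inside, so the architecture of the argument breaks. The standard repairs are either to compare with Gaussians first (Jensen, at the cost of a $(\pi/2)^{q}$ factor) and then use Wick plus the Buchholz lemma, or to organize the sum over even partitions rather than matchings; you do neither. The interpolation step is also only asserted: for non-even $p\geq2$ you never construct the analytic family, and for $1\leq p<2$ you propose to interpolate with $p=2$ as the only endpoint, which the three-lines lemma cannot do (and this range is precisely where the noncommutative Khintchine inequality changes form, so it is not routine).

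For comparison, the paper does not prove this theorem at all: it cites Mackey, Jordan, Chen, Farrell and Tropp (Corollary 7.3), whose exchangeable-pairs argument yields the explicit constant; moreover the paper only ever uses the case $p=2$, where the inequality is an identity-level computation. Your own fallback option --- simply citing Lust-Piquard--Pisier, Buchholz, or Mackey et al. --- is therefore exactly the paper's route and would be acceptable; the self-contained moment-method proof, as you have written it, is not.
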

\begin{proof}
    See~\cite[Corollary 7.3]{mackey2014matrix} for a proof with explicit constants, but see references in that paper for earlier proofs. 
\end{proof}
\begin{proof}[Proof of \cref{thm:indepinexp}]
It suffices to bound $\|\mat\Omega_\perp\Oh_2\Oh_1^\dagger\|_2.$ First, we apply submultiplicativity
to obtain~\eqref{eqn:subgaussinter2}. 
The proof proceeds as before in three steps.
\paragraph{Step 1. Bounding $\|\Oh_1^\dagger\|_2$}

Since the rows of the matrix
$\Oh_1^\top$ are independent isotropic random vectors in $\R^k$ with 
the boundedness property in Definition~\ref{def:randommat4},  
by~\cite[Theorem 5.45]{vershynin2010introduction}
\[ \expect{\max_{j \leq k }|\sigma_j(\Oh_1) -\sqrt{\ell}|} \leq   C\sqrt{K_M\log k}. \]
Therefore, by Markov's inequality~\cite[Proposition 1.2.4]{vershynin2018high}, with failure probability at {most $\delta/2$}, we have  $|\sigma_k(\Oh_1) -\sqrt{\ell}| \leq   \frac{C}{\delta}\sqrt{K_M\log k}$, or equivalently\[ \sqrt{\ell} - \frac{2C}{\delta}\sqrt{K_M\log k} \leq \sigma_k(\Oh_1) \leq \sqrt{\ell} + \frac{2C}{\delta}\sqrt{K_M\log k}.  \] 
If we take $\sqrt{\ell} = 2C/(\varepsilon\delta)\sqrt{K_M\log k}$, then  with probability {at least $1-\delta/2$}, $\sigma_k(\mathat{\Omega}_1) > 0$ and
$$\|\Oh_1^\dagger\|_2 = \frac{1}{\sigma_k(\Oh_1)} \leq  \frac{1}{ (\varepsilon^{-1} - 1)2C/\delta\sqrt{K_M\log k}}  = \frac{1}{(1-\varepsilon) \sqrt{\ell}}, $$ 
which gives~\eqref{eqn:subgauss1}.
\paragraph{Step 2. Bounding $\|\mat\Sigma_\perp\Oh_2\|_2$}
 
Let $\mat{Z} = \mat\Sigma_\perp\mat\Oh_2$. Since $\expect{\mat{ZZ}\t} = \ell \mat\Sigma_\perp\mat\Sigma_\perp\t$, by the symmetrizing argument~\cite[Lemma 6.4.2 and Exercise 6.4.4]{vershynin2018high}
\[\|\frac1\ell\mat{ZZ}\t - \mat{\Sigma}_\perp\mat\Sigma_\perp^\top  \|_2 \leq \frac{2}{\ell} \expect{ \expect{\| \sum_{j=1}^\ell \epsilon_j \vec{z}_j\vec{z}_j\t \|_2 } }, \]
where $\epsilon_j$ are independent Rademacher random variables
(also independent of the columns $\vec{z}_j = \mat{Ze}_j$, $1 \leq j \leq \ell$). The inner expectation is with respect to the Rademacher random variables and the outer expectation is with respect to $\mat\Omega$. We tackle the inner expectation first. By Jensen's inequality and the properties of the  Schatten-p norms
\[\expect{\| \sum_{j=1}^\ell \varepsilon_j \vec{z}_j\vec{z}_j\t \|_2} \leq \left( \expect{\| \sum_{j=1}^\ell \varepsilon_j \vec{z}_j\vec{z}_j\t \|_2^p}\right)^{1/p} \leq \left( \expect{\| \sum_{j=1}^\ell \varepsilon_j \vec{z}_j\vec{z}_j\t \|_{(p)}^p}\right)^{1/p}.  \] 
Next, we apply the non-commutative matrix Khintchine inequality (\Cref{thm:khintchine}) with $p=2$, to get 
\[ \begin{aligned}\expect{\| \sum_{j=1}^\ell \varepsilon_j \vec{z}_j\vec{z}_j\t \|_2} \leq & \>  \sqrt{2}  \left\| \left( \sum_{j=1}^\ell  (\vec{z}_j\vec{z}_j\t)^2 \right)^{1/2}\right\|_{(2)} \\
= & \>  \sqrt{2} \left( \trace(\sum_{j=1}^\ell(\vec{z}_j\vec{z}_j\t)^2) \right)^{1/2} =  \> \sqrt{2} \left(\sum_{j=1}^\ell \|\vec{z}_j\|_2^4 \right)^{1/2}. \end{aligned}\] 
In the second step, we have used the fact the Schatten-2 norm is the Frobenius norm, and for a symmetric positive semidefinite matrix $\|\mat{M}^{1/2}\|_{(2)} = \|\mat{M}^{1/2}\|_F = \sqrt{\trace(\mat{M})}$. In the last step, we used the cyclic property of the trace operator.  Combining all the intermediate steps gives
\[ \expect{\|\frac1\ell\mat{ZZ}\t - \mat{\Sigma}_\perp\mat\Sigma_\perp\t  \|_2} \leq \frac{2}{\ell} \sqrt{2} \expect{\left( \sum_{j=1}^\ell \|\vec{z}_j\|_2^4\right)^{1/2}}.\]
Using the subadditivity of the square root  and the fact that $\expect{\|\vec{z}_j\|_2^2} = \trace(\mat\Sigma_\perp\mat\Sigma_\perp\t)$ yields 
\[ \expect{\|\frac1\ell\mat{ZZ}\t - \mat{\Sigma}_\perp\mat\Sigma_\perp\t  \|_2} \leq \frac{2}{\ell} \sqrt{2} \left( \sum_{j=1}^\ell\expect{\|\vec{z}_j\|_2^2 } \right) = (2\sqrt{2}) \|\mat\Sigma_\perp\|_F^2.  \]
By the triangle inequality, it is easy to show that 
\[\expect{\|\mat{Z}\|_2^2} \leq  \|\mat\Sigma_\perp\|_2^2 {\ell}( 1  + (2\sqrt{2})\sr(\mat\Sigma_\perp)).   \]
Once again using the subadditivity of the square root, along with Markov's inequality we get 
\[ \prob{\|\mat{Z}\|_2 \geq  \frac{2\sqrt{\ell}\|\mat\Sigma_\perp\|_2}{\delta}( 1 + 2\sqrt{  \sr(\mat\Sigma_\perp))}} \leq \delta/2.\]
\paragraph{Step 3}
Combining
the results of Steps 1 and 2 using the union bound complete the proof.
\end{proof}

\section{Numerical Experiments}\label{sec:num}

In this section, we present numerical experiments to highlight various features of our analysis and illustrate the bounds. As mentioned earlier, we focus on the randomized subspace iteration method with $q=1$, \cref{alg:randsvd}, to illustrate the bounds. In Supplementary Materials (Section~\ref{sm:Nystrom}), we consider the performance of truncated rank Nyström method~\cite{tropp2017fixed}
for a PSD approximation of symmetric positive semidefinite (PSD) matrix $\mat A$. We also present numerical results for other distributions in Supplementary Materials (Section~\ref{sec:othernum}).

\subsection{Setup for the numerical experiments}

\paragraph{Distributions}

In the numerical experiments below, we consider three sets of random matrix distributions. 
\begin{enumerate}
    \item \textbf{Independent entries}: The first set satisfies \cref{def:randommat1} and includes the Gaussian distribution, Rademacher, Sparse Rademacher (with sparsity parameter $s=10$), Uniform distribution in the interval $[-\sqrt{3},\sqrt{3}]$.
    \item  \textbf{Independent columns}: The second set includes the column spherical distribution which satisfies \cref{def:randommat2}. We also consider uniform sampling from the columns of the Hadamard matrix without replacement which satisfies \cref{def:randommat2,def:randommat3} and has similarities to the sub-sampled randomized Hadamard transform. The last two examples include uniform sampling from the $\ell_1$ ball (satisfying \cref{def:randommat4}), and the $\ell_2$ ball of radius $\sqrt{n}$ (satisfying \cref{def:randommat2}; see~\cite[Exercise 3.4.7]{vershynin2018high}). For the last two distributions, we use the approach in~\cite{calafiore1998uniform} to generate the random matrices. 
    \item \textbf{$\alpha-$sub-exponential}: In this set, we include some examples from $\alpha-$sub-exponential distribution which satisfies \cref{def:randommat4}. The random matrix is filled with independent copies of the random variable distributed according to the following distributions: (1) Laplace (with scale parameter $\lambda =1$),  (2) centered Poisson (with parameter $\lambda = 10$), (3) Logistic (with scale parameter $s=1$), and (4) centered Weibull (with parameters $a=1, b=0.5$). These random variables were generated using {\sc Matlab}'s `Statistics and Machine Learning Toolbox'. 
\end{enumerate}
Note that some of the random matrices thus constructed do not have isotropic columns. However, in each case, the covariance matrix corresponding to each column is a scaled identity matrix. This does not affect the analysis for the error in the low-rank approximation since the term $\|\mat\Sigma_\perp \Oh_2\Oh_1^\dagger\|_2$ is invariant to rescaling all the columns. In case the mean is not zero, as in the case of Poisson and Weibull distributions, we center them by subtracting the means $\lambda$ and $a\Gamma(1+1/b)$ respectively, to ensure zero means.

\subsection{Experiment 1: Different singular value distributions} \label{ssec:exp1}

\begin{figure}[!ht]
    \centering
    \includegraphics[scale=0.25]{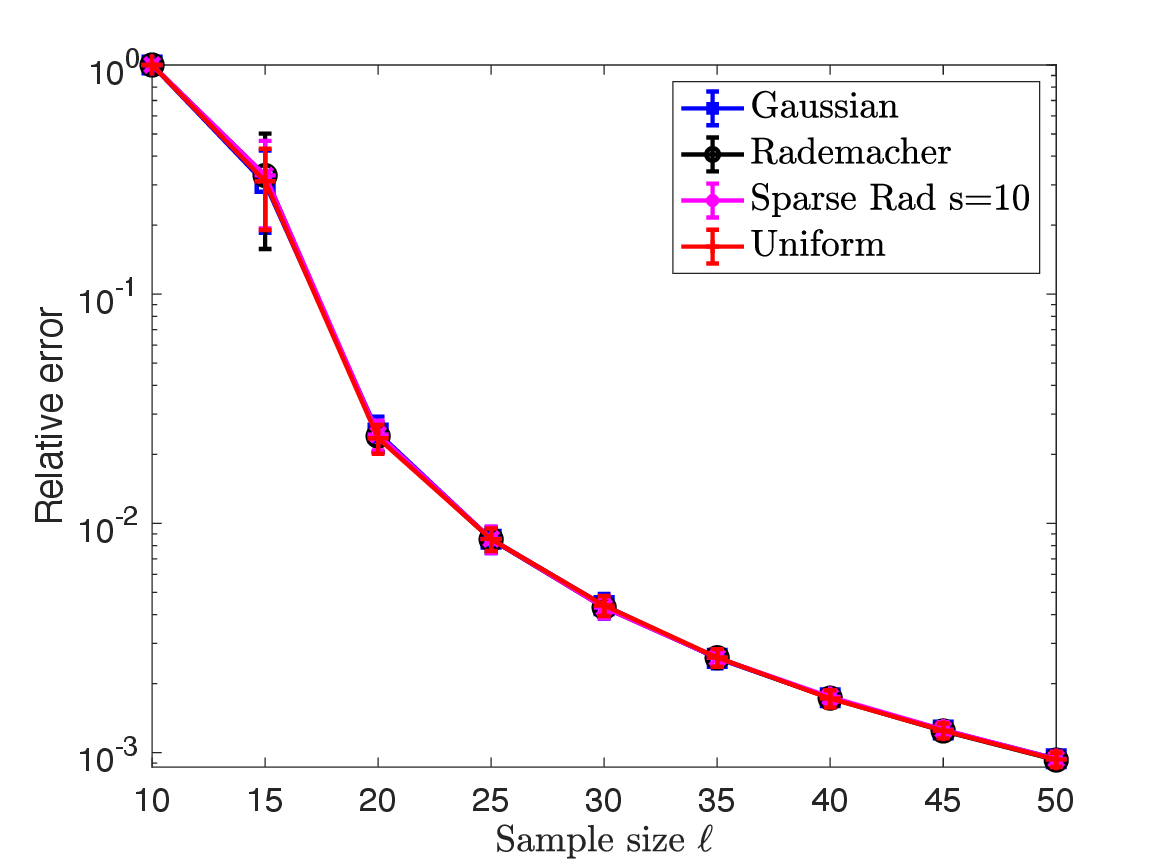}
    \includegraphics[scale=0.255]{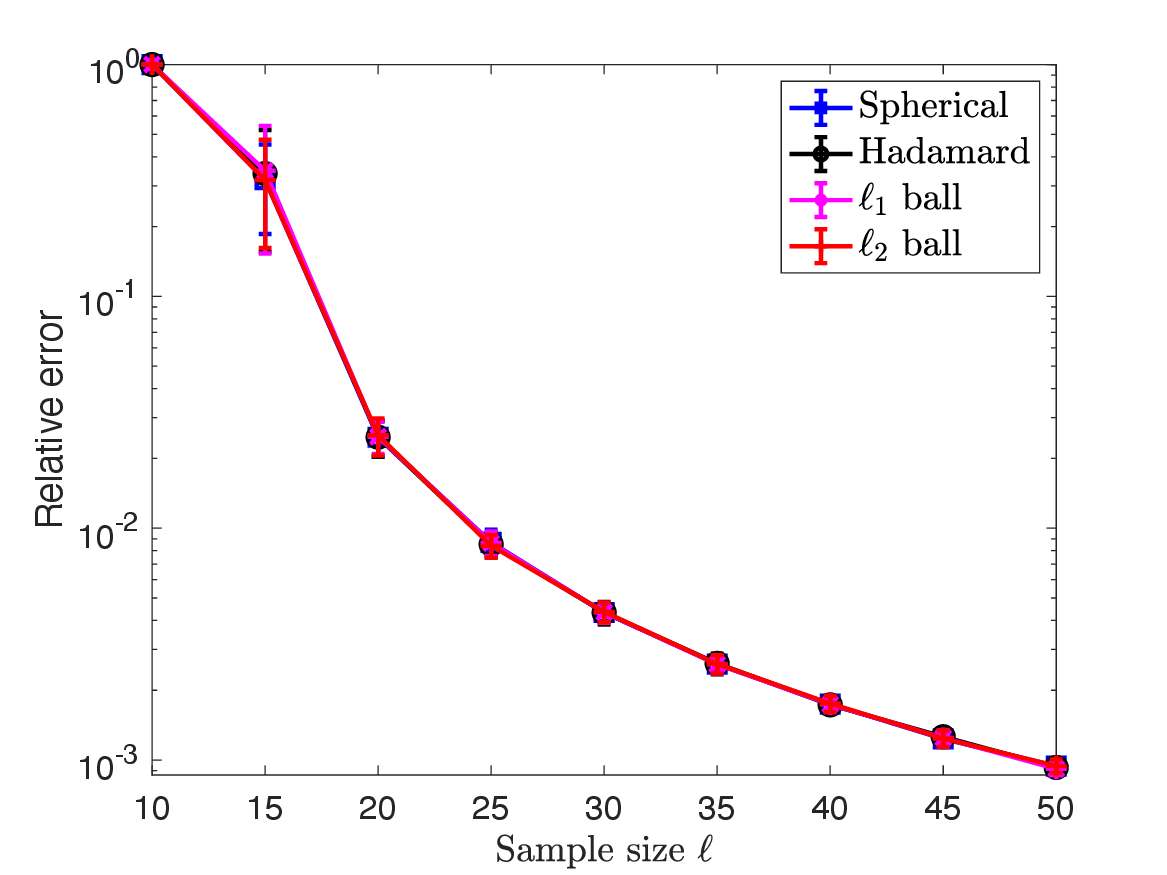}
    \includegraphics[scale=0.255]{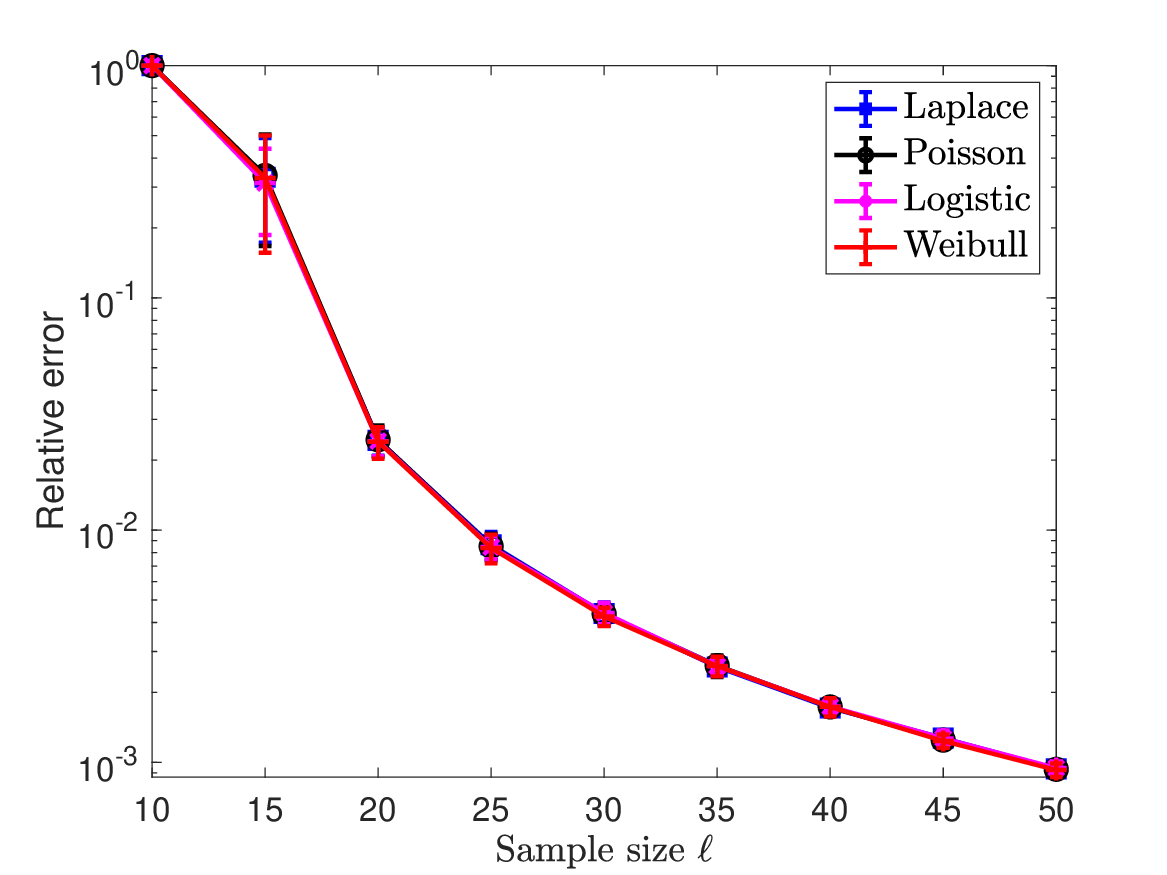}
    \caption{Relative error (RE) vs. sample size $\ell$ for randomized SVD low-rank approximation of the \texttt{FastDecay} test matrix for each set of random matrices.}
    \label{fig:dist_decay}
\end{figure}
In this experiment, we consider the effect of the random matrix $\mat\Omega \in \R^{n\times \ell}$ on the relative error. Consider two different synthetic test matrices $\mat{A}$ described below: 
\begin{enumerate}
    \item \texttt{FastDecay}: The matrix $\mat{A} \in \R^{n\times n}$ is constructed through its SVD $\mat{A} = \mat{U\Sigma V}\t.$ The matrices $\mat{U}$ and $\mat{V}$ are orthogonal matrices, first generated randomly and then computing the QR factorizations. The singular value matrix $\mat\Sigma$ is constructed as 
    \[ \mat\Sigma = \diag(\underbrace{1,\dots,1}_{r}, 2^{-d},3^{-d},\dots, (n-r+1)^{-d}), \]
    where $d=2$ is the degree of decay, $n=256$, and $r=15$.
    \item \texttt{ControlledGap:} The matrix $\mat A \in \R^{m \times n}, m \geq n$, is defined as 
    \[ \mat{A}  = \sum_{j=1}^r \frac{10}{j} \vec{x}_j\vec{y}_j\t + \sum_{j=r+1}^{\min\{m,n\}}\frac{1}{j}\vec{x}_j\vec{y}_j\t,  \]
    where $\vec{x}_j \in \R^{m}$ and $\vec{y}_j \in \R^{n}$ are sparse random vectors with density $0.25$ created using the {\sc Matlab} command \texttt{sprand}. We take $m=3000$ and $n=256$. 
    
\end{enumerate}

Throughout this section, the relative error is defined as 
\[\text{RE} = \frac{\|\mat{A}-\mat{A}_k\|_2}{\|\mat{A}\|_2 }. \]
We vary the number of samples $\ell$, compute the low-rank approximation using \cref{alg:randsvd} (over $100$ independent samples), and plot the average relative error versus the number of samples. The error bars in the plots correspond to one standard deviation of the relative error. The results for the \texttt{FastDecay} matrix are visualized in~\cref{fig:dist_decay} and for the \texttt{ControlledGap} in~\cref{fig:dist_control}. In each plot, the left panel corresponds to the test matrices of the Independent entries type, the middle one to the Independent column type, and  the right one to the $\alpha-$sub-exponential type.

\begin{figure}[!ht]
    \centering
    \includegraphics[scale=0.26]{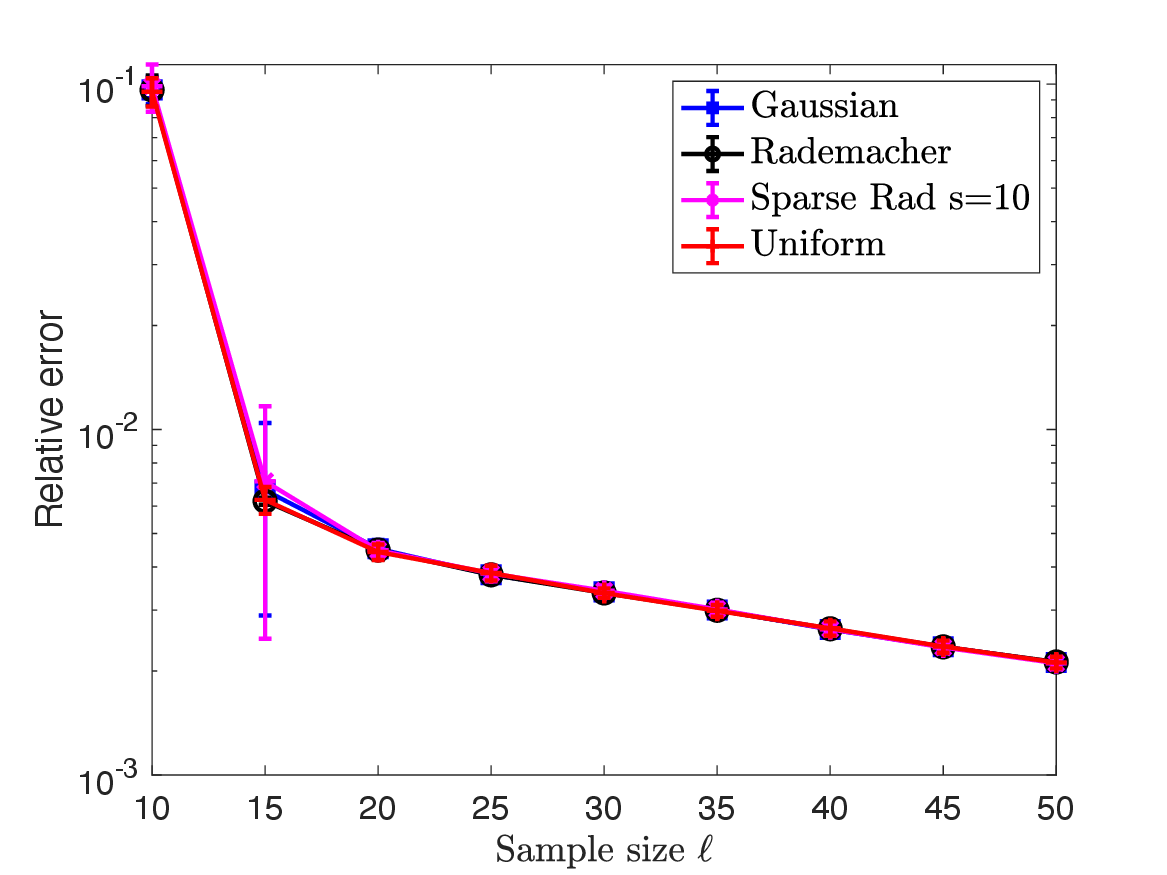}
    \includegraphics[scale=0.26]{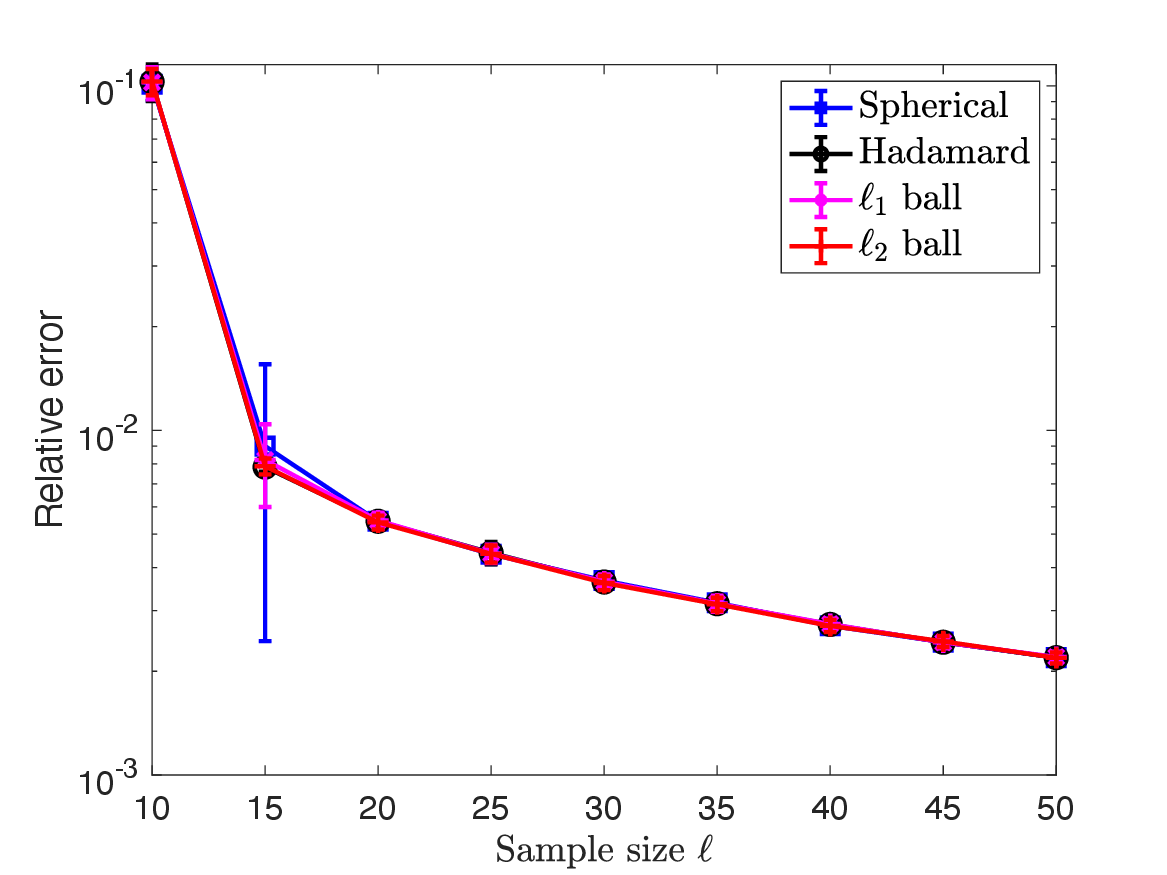}
    \includegraphics[scale=0.26]{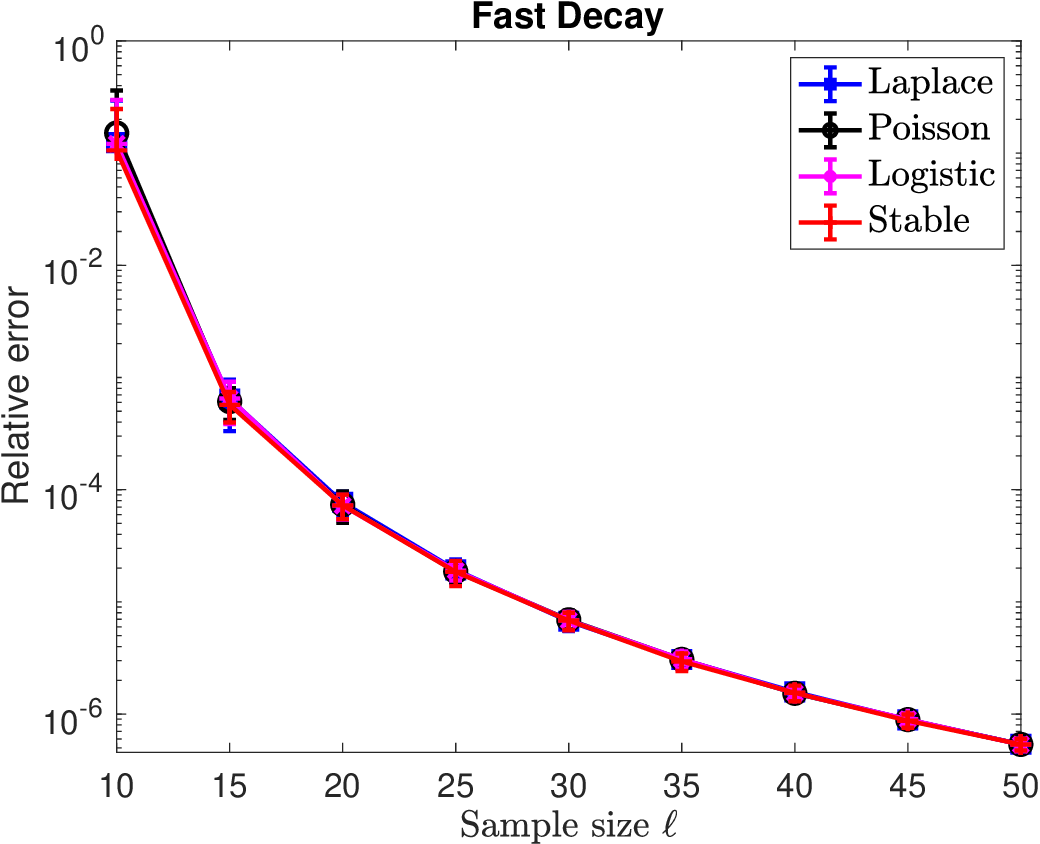}
    \caption{Relative error (RE) vs. sample size $\ell$ for randomized SVD low-rank approximation of the \texttt{ControlledGap} test matrix for each set of random matrices.}
    \label{fig:dist_control}
\end{figure}

For the \texttt{FastDecay} matrix, the relative error shows sharp decay with an increasing number of samples with a sharp transition at index $15$. All the distributions show very similar qualitative behavior with a large error bar at index $15$ which corresponds to the transition of the spectrum from flat to decaying. For this test matrix, this numerical observation is consistent with \cref{thm:subgauss} and \cref{thm:subgauss2} for matrices satisfying \cref{def:randommat1} and \cref{def:randommat2}.  For the \texttt{ControlledGap} matrix, the error decays sharply before index $15$ after which the decay is slower. All distributions have very similar behavior and the error bar appears largest around index $15$. {Furthermore, the presented results provide some numerical evidence that even heavy-tailed distributions can be used for low-rank approximations.}

\subsection{Experiment 2: Real-world dataset}
\label{sec:Exp2}

In this example, we consider the ocean surface temperature data set which is publicly available as NOAA Optimum Interpolation (OI) SST V2~\cite{noaa_oi_sst_v2}. This data set is in the form of a time series in which a snapshot is recorded every week in the span of 1990–2016 and data are available at a resolution of $1^\circ \times 1^\circ$. In total, there are $1713$ snapshots representing the columns of the matrix $\mat{A}$, which gives the \texttt{SeaSurfaceTemp} test matrix of size $64800\times 1713$. {We again demonstrate the effect of different random matrices $\mat{\Omega} \in \R^{n \times \ell}$ on the quality of the low-rank approximations, by varying the number of samples $\ell$ and plotting the average relative error as well as its standard deviation. The results for all three sets of random matrices are presented in \Cref{fig:sst}. The relative error admits very slow decay with an increasing number of samples, but the performance is comparable across distributions.} 

\begin{figure}[!ht]
    \centering
    \includegraphics[scale=0.26]{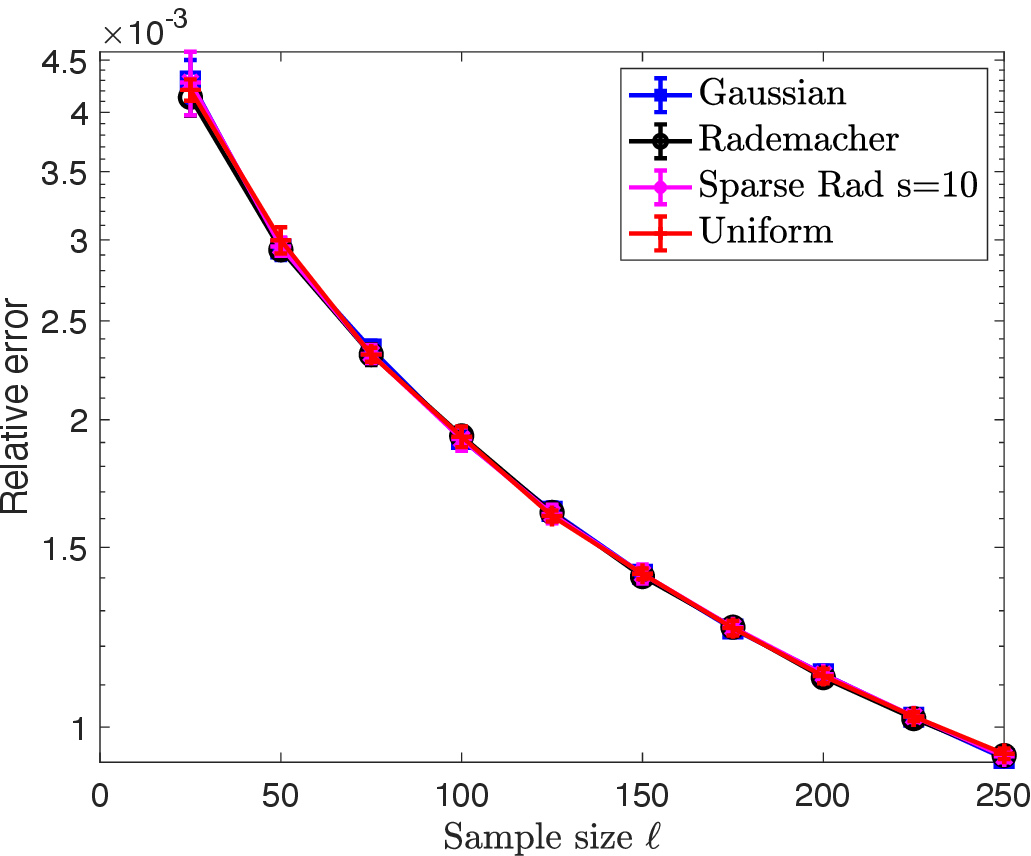}
    \includegraphics[scale=0.26]{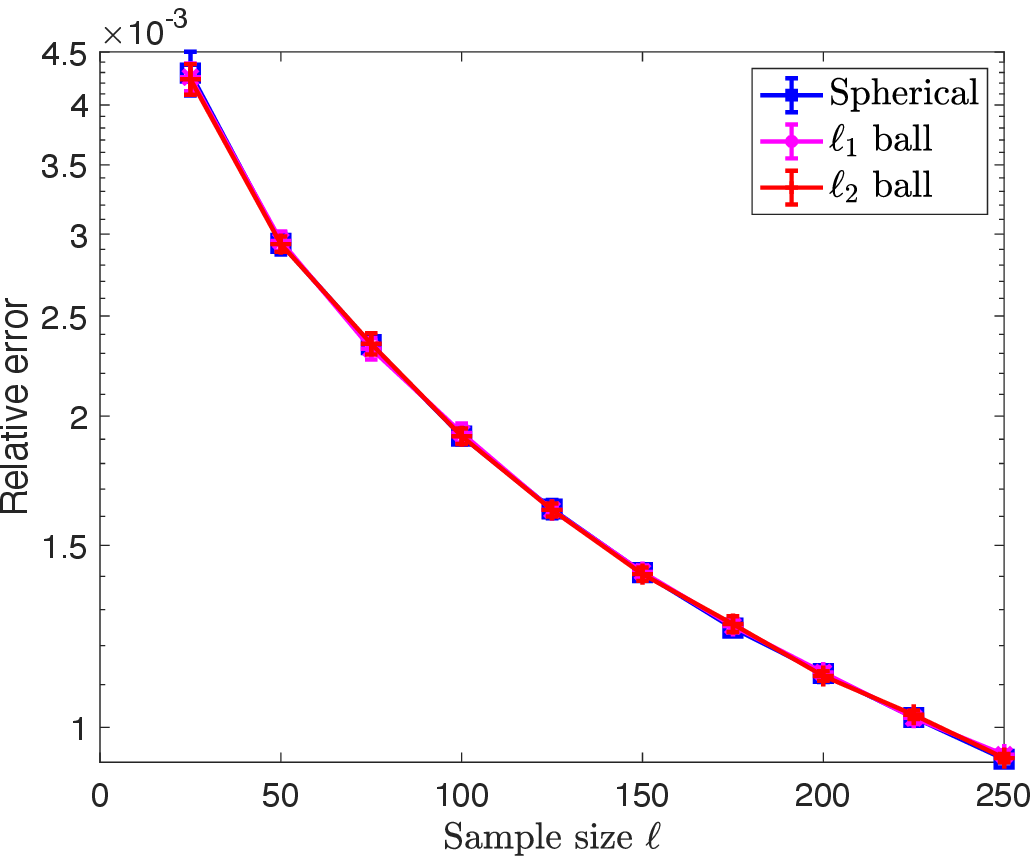}
    \includegraphics[scale=0.26]{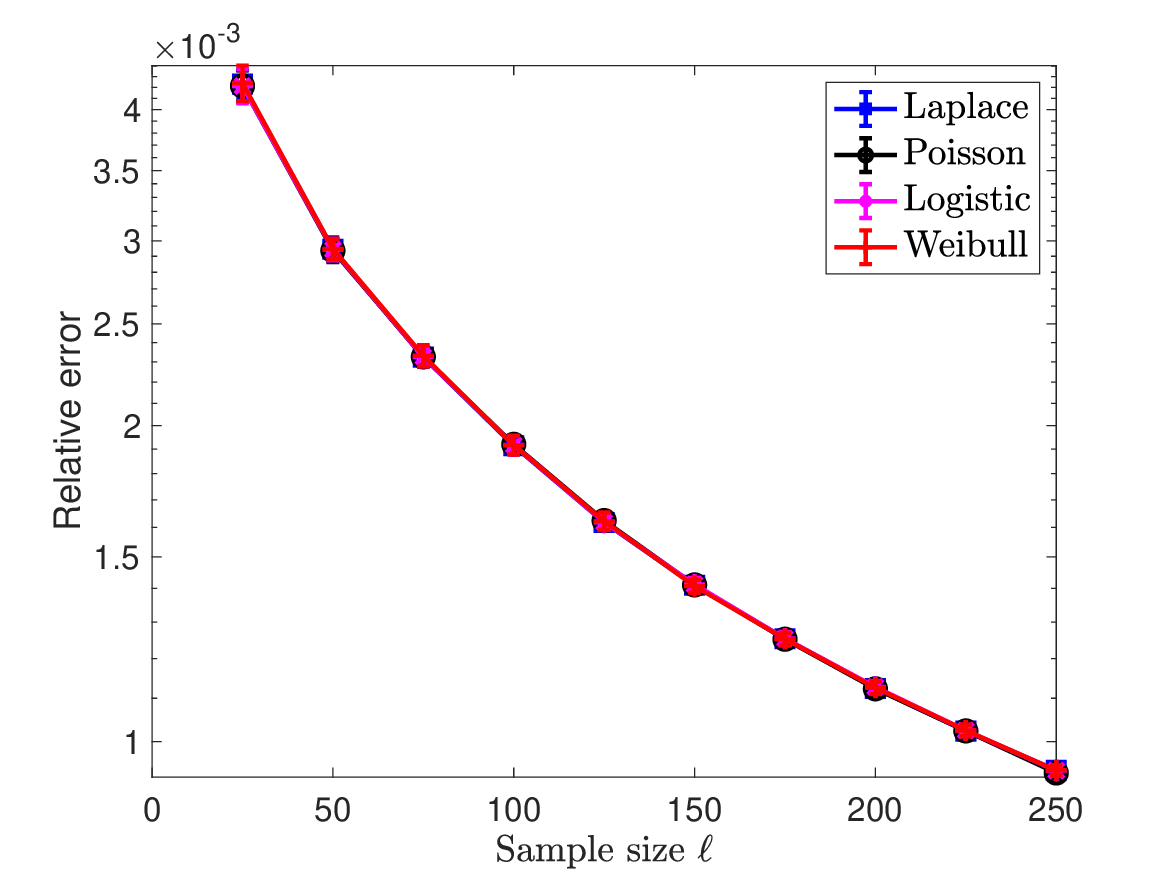}
    \caption{Relative error (RE) vs. sample size $\ell$ for the \texttt{SeaSurfaceTemp} test matrix for each set of random matrices.}
    \label{fig:sst}
\end{figure}

\section{Conclusions} In this paper, we provide analysis for randomized low-rank approximations for four different classes of random matrices. The analysis is based on the connection between low-rank approximations to sample covariance matrices, and using results from non-asymptotic random matrix theory. The analysis focuses on the minimal number of samples and the error in the low-rank approximation using randomized subspace iteration. The results are more generally applicable to other low-rank approximation algorithms. Of the four classes of random matrices discussed, \cref{def:randommat4} is the most general only requiring independent, isotropic columns, with bounded second moment and provides theoretical guarantees for a truly wide range of random matrices. Numerical results on a range of test matrices show that in practice the approximation errors are comparable across different distributions.

There are several avenues for future work. In ongoing work, we are developing results for random matrices using tensor random projections. We end this paper with several additional questions worth considering:
\begin{enumerate}
    \item Can we find explicit and optimal constants for the theorems presented in this work?
    \item It is known that  $\mc{O}(k \log k)$ samples are required for certain distributions; therefore, a general bound must require $\mc{O}(k \log k)$. Can this be improved to $\mc{O}(k)$ for certain distributions? We believe that it can be improved to $\mc{O}(k)$ for uniform sampling from a convex set and log-concave distributions following~\cite{adamczak2010quantitative,srivastava2013covariance}. 
    \item Can we construct random matrices that are easy to generate and store and satisfy one of the sufficient conditions in~\cref{def:randommat1,def:randommat2,def:randommat3,def:randommat4}? 
    \item Is the bounded moment condition in~\cref{def:randommat4} necessary? Can we extend our analysis to other heavy-tailed distributions?
    \item Can we improve the dependence on failure probability in~\cref{thm:indepinexp} and extraneous logarithmic factors?
\end{enumerate}

\appendix

\section{Additional background and proofs}\label{ssec:exproofs}

In Section~\ref{ssec:exproofs}, we give some additional background on Gaussian width, review some auxiliary results, and give additional proofs. In Section~\ref{ssec:otherex}, we give additional examples of distributions to illustrate the main results. Then, in Section~\ref{sec:othernum}, we present additional numerical examples, and in Section~\ref{sm:Nystrom} we present an extension of our results to the Nystr\"om approach.

\subsection{Gaussian width and complexity}\label{ssec:gwidth}
The analysis of random matrices depends on the notion of the \textit{Gaussian width}. The Gaussian width of a set $\mc{T} \subset \R^n$ is defined as 
\begin{equation}
    w(\mc{T})  :=\expect{\sup_{\B{x} \in \mc{T}} \, \inner{\B{g}}{\B{x}}}, \qquad \B{g} \sim \mathcal{N}(\B{0},\B{I}).
\end{equation}
This is a fundamental quantity that is closely related to other geometric qualities associated with $\mc{T}$ such as surface area and volume. We also define the radius of a set $\mc{T} \subset \R^n$ as $\rad(\mc{T}) := \sup_{\B{x} \in \mc{T}} \|\B{x}\|_2$.  

The Gaussian width $w(\mc{T})$ has the following properties~\cite[Proposition 7.5.2.]{vershynin2018high}: it is invariant under translations and rotations; that is if $\mat{U} \in \R^{n \times n}$ is orthogonal and $\vec{y}\in \R^n$
$ w(\mat{U}\mc{T} + \mat{y}) = w(\mc{T})$. Furthermore, it is equivalent to the diameter of a set $\mc{T}$, i.e., $ \frac{1}{\sqrt{2\pi}}\mathsf{diam}(\mc{T}) \leq w(\mc{T}) \leq \frac{\sqrt{n}}{2} \mathsf{diam}(\mc{T})$,
where the diameter $ \mathsf{diam}(\mc{T}) := 2\rad(\mc{T})$.

Related to the Gaussian width of a set $\mc{T} \subset \R^n$ is the notion of the Gaussian complexity, $\gamma(\mc{T})$, defined as \begin{equation}\label{eqn:gausscomplex}
    \gamma(\mc{T})  :=\expect{\sup_{\B{x} \in \mc{T}} \, |\inner{\B{g}}{\B{x}}|}, \qquad \B{g} \sim \mathcal{N}(\B{0},\B{I}).
\end{equation}
Obviously, $w(\mc{T}) \leq \gamma(\mc{T})$, and for any set containing the origin both are equivalent since $\gamma(\mc{T}) \leq 2 w(\mc{T})$.

Let us now state and prove an elementary inequality regarding the Gaussian width of an ellipsoidal set. This result plays an essential role in deriving dimension-independent bounds. 
\begin{lemma}\label{lem:wT} Let $\B{H}\in \R^{m\times n}$. Then $w(\B{H} \mc{S}^{n-1} ) \leq  \|\B{H}\|_F$.
\end{lemma}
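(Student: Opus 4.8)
The plan is to compute the Gaussian width directly from its definition by evaluating the supremum in closed form and then applying a basic moment bound for the norm of a Gaussian vector. First I would write $w(\B{H}\mc{S}^{n-1}) = \expect{\sup_{\B{x}\in\mc{S}^{n-1}}\inner{\B{g}}{\B{Hx}}}$ with $\B{g}\sim\mathcal{N}(\B{0},\B{I}_m)$, and use the adjoint to move $\B{H}$ onto $\B{g}$: $\inner{\B{g}}{\B{Hx}} = \inner{\B{H}\t\B{g}}{\B{x}}$. Since $\B{x}$ ranges over the unit sphere $\mc{S}^{n-1}$, the inner supremum is exactly $\|\B{H}\t\B{g}\|_2$ by Cauchy--Schwarz (with equality at $\B{x} = \B{H}\t\B{g}/\|\B{H}\t\B{g}\|_2$). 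Hence $w(\B{H}\mc{S}^{n-1}) = \expect{\|\B{H}\t\B{g}\|_2}$.

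Next I would bound this expectation using Jensen's inequality (concavity of the square root): $\expect{\|\B{H}\t\B{g}\|_2} \leq \left(\expect{\|\B{H}\t\B{g}\|_2^2}\right)^{1/2}$. The remaining quantity is elementary: $\expect{\|\B{H}\t\B{g}\|_2^2} = \expect{\B{g}\t\B{H}\B{H}\t\B{g}} = \trace(\B{H}\B{H}\t)$ by the cyclic property of the trace together with $\expect{\B{g}\B{g}\t} = \B{I}_m$. Since $\trace(\B{H}\B{H}\t) = \|\B{H}\|_F^2$, combining the two displays gives $w(\B{H}\mc{S}^{n-1}) \leq \|\B{H}\|_F$, as claimed.

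There is no real obstacle here — the argument is a short sequence of standard manipulations (adjoint, Cauchy--Schwarz, Jensen, trace identity). The only point requiring a little care is making sure the supremum over the sphere is attained and equals $\|\B{H}\t\B{g}\|_2$ even when $\B{H}\t\B{g} = \B{0}$ (the degenerate case, where the supremum is trivially $0$), but this is immediate. I would present the proof in roughly four lines.

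\begin{proof}
Let $\B{g}\sim\mathcal{N}(\B{0},\B{I}_m)$. Using the definition of the Gaussian width and the identity $\inner{\B{g}}{\B{Hx}} = \inner{\B{H}\t\B{g}}{\B{x}}$, we have
\[
w(\B{H}\mc{S}^{n-1}) = \expect{\sup_{\B{x}\in\mc{S}^{n-1}} \inner{\B{H}\t\B{g}}{\B{x}}} = \expect{\|\B{H}\t\B{g}\|_2},
\]
since the supremum of a linear functional over the unit sphere equals the norm of its defining vector. By Jensen's inequality applied to the concave function $\sqrt{\cdot}$, and then the cyclic property of the trace together with $\expect{\B{g}\B{g}\t} = \B{I}_m$,
\[
\expect{\|\B{H}\t\B{g}\|_2} \leq \left(\expect{\|\B{H}\t\B{g}\|_2^2}\right)^{1/2} = \left(\expect{\trace(\B{g}\t\B{H}\B{H}\t\B{g})}\right)^{1/2} = \left(\trace(\B{H}\B{H}\t)\right)^{1/2} = \|\B{H}\|_F.
\]
This proves the claim.
\end{proof}
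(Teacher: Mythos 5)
Your proof is correct and follows essentially the same route as the paper: both reduce the width to $\expect{\|\B{H}\t\B{g}\|_2}$ (the paper via a Cauchy--Schwarz bound over the sphere, you via the exact identification of the supremum of a linear functional) and then bound it by $\bigl(\expect{\|\B{H}\t\B{g}\|_2^2}\bigr)^{1/2} = \|\B{H}\|_F$, where your appeal to Jensen and the paper's ``Cauchy--Schwarz for random variables'' are the same inequality.
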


\begin{proof}[Proof of \cref{lem:wT}]
Let $\B{x} = \B{Hz} \in \B{H} \mc{S}^{n-1}$. Then,
\[   {\sup_{\B{x} \in \B{H}\mc{S}^{n-1} } \, \inner{\B{g}}{\B{x}}} =  {\sup_{\B{z} \in \mc{S}^{n-1} } \, \inner{\B{g}}{\B{Hz}}} = {\sup_{\B{z} \in \mc{S}^{n-1} } \, \inner{\B{H}\t\B{g}}{\B{z}}}.\] 
Applying Cauchy-Schwartz inequality $\inner{\B{H}\t \B{g}}{\B{z}} \leq \|\B{H}\t\B{g}\|_2  \|\B{z}\|_2= \|\B{H}\t\B{g}\|_2$, for any  $\B{z} \in \mc{S}^{n-1}$. Next, applying Cauchy-Schwartz inequality for random variables yields
\[  w(\B{H} \mc{S}^{n-1} ) = \expect{\sup_{\B{z} \in \mc{S}^{n-1} } \, \inner{\B{H}\t\B{g}}{\B{z}}} \leq  \expect{\left(\|\B{H}\t\B{g}\|_2^2\right)^{1/2}} = \sqrt{\mathsf{trace}(\B{H}\t\B{H})} = \|\B{H}\|_F  \]
which completes the proof.
\end{proof}

Note that \cref{lem:wT} provides a tighter bound than the one in~\cite[Exercise 7.5.4]{vershynin2018high} since
$\|\B{H}\|_F \leq \sqrt{n}\|\B{H}\|_2$.

\subsection{Auxiliary results}

The following result~\cite[Proposition 2.6.1]{vershynin2018high} gives insight into the sum of independent sub-Gaussian random variables. 
\begin{lemma}\label{lem:subgausssum}
    Let $X_1,\dots,X_N$ be independent, mean zero, sub-Gaussian random variables. Then $\sum_{i=1}^N X_i$ is also sub-Gaussian random variable with zero mean and 
    \[ \| \sum_{i=1}^N X_i\|_{\psi_2}^2 \leq C_R\sum_{i=1}^N\| X_i\|_{\psi_2}^2. \]
\end{lemma}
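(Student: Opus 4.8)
The plan is to route through the moment generating function (MGF) characterization of sub-Gaussianity and exploit independence to factor the MGF of the sum. Recall from \cite[Proposition 2.5.2]{vershynin2018high} that, for a mean-zero random variable $X$, the Orlicz norm $\|X\|_{\psi_2}$ defined in~\eqref{eqn:subgaussnorm} is equivalent, up to absolute multiplicative constants, to the smallest $K \geq 0$ for which the two-sided MGF bound $\expect{\exp(\lambda X)} \leq \exp(K^2\lambda^2)$ holds for every $\lambda \in \R$. In particular, there is an absolute constant $c_1$ such that each $X_i$ satisfies $\expect{\exp(\lambda X_i)} \leq \exp\!\big(c_1 \lambda^2 \|X_i\|_{\psi_2}^2\big)$ for all $\lambda \in \R$.

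First I would note that $S := \sum_{i=1}^N X_i$ has mean zero by linearity of expectation. Then, using the independence of $X_1,\dots,X_N$, the MGF of $S$ factors, and applying the per-variable bound above gives, for every $\lambda \in \R$,
\[
\expect{\exp(\lambda S)} = \prod_{i=1}^N \expect{\exp(\lambda X_i)} \leq \prod_{i=1}^N \exp\!\left(c_1 \lambda^2 \|X_i\|_{\psi_2}^2\right) = \exp\!\left(c_1 \lambda^2 \sum_{i=1}^N \|X_i\|_{\psi_2}^2\right).
\]
Hence $S$ satisfies the two-sided MGF bound with parameter $K^2 = c_1 \sum_{i=1}^N \|X_i\|_{\psi_2}^2$, and by the reverse direction of the equivalence in \cite[Proposition 2.5.2]{vershynin2018high} there is an absolute constant $c_2$ with $\|S\|_{\psi_2}^2 \leq c_2 K^2 = c_1 c_2 \sum_{i=1}^N \|X_i\|_{\psi_2}^2$. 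Setting $C_R := c_1 c_2$ yields the claim, and in particular shows $S$ is sub-Gaussian.

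The only point requiring care is the bookkeeping of absolute constants across the equivalence between $\|\cdot\|_{\psi_2}$ and the MGF parameter $K$, together with the observation that the clean centered form $\expect{\exp(\lambda X)} \leq \exp(K^2\lambda^2)$ is precisely what is available because each $X_i$ is mean zero; this is what prevents a first-order term $\lambda\,\expect{X_i}$ from surviving in the product. No other step is delicate, so I would not expect any genuine obstacle here — the result is essentially a consequence of the sub-Gaussian MGF characterization plus multiplicativity of MGFs under independence.
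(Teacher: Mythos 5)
Your argument is correct and is essentially the same as the paper's: the paper simply cites \cite[Proposition 2.6.1]{vershynin2018high}, whose proof is exactly this MGF route — use the mean-zero MGF characterization from \cite[Proposition 2.5.2]{vershynin2018high}, factor the MGF by independence, and convert back to the $\psi_2$ norm at the cost of absolute constants. No gaps; your note that the mean-zero assumption is what makes the clean bound $\expect{\exp(\lambda X_i)} \leq \exp(c_1\lambda^2\|X_i\|_{\psi_2}^2)$ available is precisely the relevant point of care.
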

This result is a form of generalization of a result for Gaussian random variables, i.e., If $X_1,\dots,X_N$ are independent Gaussian random variables, then their sum is a Gaussian random variable whose variance $\text{Var}(\sum_{i=1}^N X_i)$ is the sum of variances $\sum_{i=1}^N\text{Var}(X_i)$. 

The following results regarding sub-Gaussian random vectors will be useful in our analysis.
\begin{lemma}\label{lem:subgauss}
Let {$\mat{W} \in \R^{n\times d}$} and let $\vec{z} \in \R^n$ be a vector with independent, mean zero, sub-Gaussian entries such that $\|z_i\|_{\psi_2} \leq K $, $1 \leq i \leq n$. Then $\mat{W}^\top \vec{z}$ is sub-Gaussian, so that for every {$\vec{x} \in \R^d$}
\begin{equation}\label{eqn:subgaussl2} \|  \vec{x}^\top \mat{W}^\top \vec{z}\|_{\psi_2 }^2 \leq C_R K^2\|\vec{x}^\top \mat{W}^\top \vec{z}\|_{L_2}^2.  \end{equation}
Furthermore, if $\mat{W}$ has orthonormal columns, then $\|\mat{W}^\top \vec{z}\|_{\psi_2 }^2\leq C_R K^2$. Here $C_R$ is the constant from \cref{lem:subgausssum}.
\end{lemma}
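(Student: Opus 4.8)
The plan is to reduce the vector statement to a scalar one and then invoke \cref{lem:subgausssum}. Fix $\vec{x}\in\R^d$ and set $\vec{a}:=\mat{W}\vec{x}\in\R^n$, so that $\vec{x}\t\mat{W}\t\vec{z}=\sum_{i=1}^n a_i z_i$ is a linear combination of the coordinates of $\vec{z}$, which by hypothesis are independent, mean zero, and sub-Gaussian. By homogeneity of the $\psi_2$-norm each summand is mean-zero sub-Gaussian with $\|a_i z_i\|_{\psi_2}=|a_i|\,\|z_i\|_{\psi_2}$, so \cref{lem:subgausssum} applied to $\sum_i a_i z_i$ together with $\|z_i\|_{\psi_2}\le K$ yields
\[ \|\vec{x}\t\mat{W}\t\vec{z}\|_{\psi_2}^2 \;\le\; C_R\sum_{i=1}^n a_i^2\,\|z_i\|_{\psi_2}^2 \;\le\; C_R K^2\,\|\mat{W}\vec{x}\|_2^2 . \]
This already shows that $\vec{x}\t\mat{W}\t\vec{z}$ is sub-Gaussian for every $\vec{x}$, i.e.\ that $\mat{W}\t\vec{z}$ is a sub-Gaussian random vector.

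To deduce \eqref{eqn:subgaussl2} I would rewrite the right-hand side in terms of an $L_2$ norm: since the $z_i$ are independent and mean zero, $\|\vec{x}\t\mat{W}\t\vec{z}\|_{L_2}^2=\sum_i a_i^2\,\expect{z_i^2}$, and when the coordinates of $\vec{z}$ are normalized to unit variance (as in the random-matrix models of this paper) this equals $\sum_i a_i^2=\|\mat{W}\vec{x}\|_2^2$; plugging this into the display above gives \eqref{eqn:subgaussl2}. For the final claim I would specialize to $\mat{W}$ with orthonormal columns, so that $\|\mat{W}\vec{x}\|_2=\|\vec{x}\|_2$ and the display becomes $\|\vec{x}\t\mat{W}\t\vec{z}\|_{\psi_2}\le\sqrt{C_R}\,K\,\|\vec{x}\|_2$ for all $\vec{x}$; taking the supremum over $\vec{x}\in\mc{S}^{d-1}$ and using the definition of the sub-Gaussian norm of a vector, $\|\mat{W}\t\vec{z}\|_{\psi_2}=\sup_{\vec{x}\in\mc{S}^{d-1}}\|\vec{x}\t\mat{W}\t\vec{z}\|_{\psi_2}$, gives $\|\mat{W}\t\vec{z}\|_{\psi_2}^2\le C_R K^2$.

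I do not anticipate any genuine obstacle here: the whole argument is a one-step reduction to \cref{lem:subgausssum} via homogeneity of $\|\cdot\|_{\psi_2}$, followed by taking a supremum over the sphere. The only items needing a word of care are the $L_2$-comparison used for \eqref{eqn:subgaussl2}, which relies on the normalization of the entries of $\vec{z}$, and the constant, which is inherited unchanged from \cref{lem:subgausssum}.
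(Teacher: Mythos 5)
Your proposal is correct and follows essentially the same route as the paper's proof: reduce to the scalar sum $\sum_i a_i z_i$ with $\vec{a}=\mat{W}\vec{x}$, apply \cref{lem:subgausssum}, identify $\|\vec{x}\t\mat{W}\t\vec{z}\|_{L_2}^2$ with $\|\mat{W}\vec{x}\|_2^2$, and take the supremum over the sphere in the orthonormal-columns case. Your explicit remark that the $L_2$-identification uses unit-variance entries is if anything slightly more careful than the paper, which makes that normalization implicitly in computing $\expect{(\vec{x}\t\mat{W}\t\vec{z})^2}=\vec{x}\t\mat{W}\t\mat{W}\vec{x}$.
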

\begin{proof}[Proof of \cref{lem:subgauss}]
For every $\vec{x} \in \R^d$, with $\vec{y} = \mat{Wx}$
$$\|  \vec{x}^\top \mat{W}^\top \vec{z}\|_{\psi_2 } = \|\vec{y}^\top\vec{z} \|_{\psi_2} = \|\sum_{i=1}^n y_i z_i \|_{\psi_2}. $$
Therefore, the marginal is a weighted sum of sub-Gaussian random variables and, therefore, sub-Gaussian. Using~\Cref{lem:subgausssum} 
\begin{equation}
    \label{eqn:hoeff}
    \|\sum_{i=1}^n y_i z_i \|_{\psi_2}^2 \leq C_R \sum_{i=1}^n y_i^2 \|z_i\|_{\psi_2}^2 \leq C_R \|\vec{y}\|_2^2 \max_{1 \leq i \leq n} \|z_i\|_{\psi_2}^2, 
\end{equation}
where $C_R$ is the constant in \cref{lem:subgausssum}. 
Note that 
\[  \|\vec{x}^\top \mat{W}^\top \vec{z}\|_{L_2}^2 = \left(\expect{\vec{x}^\top \mat{W}^\top \vec{z}}^2 \right) = \left(\vec{x}^\top \mat{W}^\top \mat{Wx}   \right) = \|\vec{y}\|_2^2,\]
which when combined with~\eqref{eqn:hoeff} gives~\eqref{eqn:subgaussl2}. 

For the second part,  $\vec{y} \in \R^n$ has unit norm since $\mat{W}$ has orthonormal columns. Therefore, $\mat{W}^\top \vec{z}$ is sub-Gaussian with norm $\|\mat{W}^\top \vec{z}\|_{\psi_2 }^2\leq C_RK^2  \text{sup}_{\vec{y} \in \mc{S}^{{d-1}}} \|\vec{y}\|_2^2  = C_RK^{{2}}$.
\end{proof}

\begin{lemma}\label{lem:subgauss2} Let $\vec{x} \in \R^n$ be a sub-Gaussian random vector with $\|\vec{x}\|_{\psi_2} \leq K$ and let {$\mat{W} \in \R^{n\times d}$} be a matrix with orthonormal columns. Then $\|\mat{W}\t\vec{x}\|_{\psi_2}^2 \leq C_R K^2$ where $C_R$ is the constant in \cref{lem:subgausssum}.
\end{lemma}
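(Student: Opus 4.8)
The plan is to reduce the statement about the sub-Gaussian vector $\vec{x} \in \R^n$ to a statement about scalar marginals, exactly as in the definition of the sub-Gaussian norm of a random vector. Recall that $\|\mat{W}\t\vec{x}\|_{\psi_2} = \sup_{\vec{u} \in \mc{S}^{d-1}} \|\vec{u}\t\mat{W}\t\vec{x}\|_{\psi_2}$, so the goal is to bound $\|\vec{u}\t\mat{W}\t\vec{x}\|_{\psi_2}$ uniformly over $\vec{u} \in \mc{S}^{d-1}$.

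First I would fix $\vec{u} \in \mc{S}^{d-1}$ and set $\vec{w} = \mat{W}\vec{u} \in \R^n$. Since $\mat{W}$ has orthonormal columns, $\|\vec{w}\|_2 = \|\vec{u}\|_2 = 1$, i.e.\ $\vec{w} \in \mc{S}^{n-1}$. Then $\vec{u}\t\mat{W}\t\vec{x} = \vec{w}\t\vec{x}$, and by the definition of the sub-Gaussian norm of the vector $\vec{x}$ we have $\|\vec{w}\t\vec{x}\|_{\psi_2} \leq \sup_{\vec{z} \in \mc{S}^{n-1}} \|\vec{z}\t\vec{x}\|_{\psi_2} = \|\vec{x}\|_{\psi_2} \leq K$. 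Taking the supremum over $\vec{u} \in \mc{S}^{d-1}$ gives $\|\mat{W}\t\vec{x}\|_{\psi_2} \leq K$, and hence $\|\mat{W}\t\vec{x}\|_{\psi_2}^2 \leq K^2$.

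This argument actually gives the cleaner bound $\|\mat{W}\t\vec{x}\|_{\psi_2}^2 \leq K^2$ without the constant $C_R$; the factor $C_R$ in the statement is harmless (it is $\geq 1$) and keeps the bound uniform in form with the companion result \cref{lem:subgauss}, where the constant genuinely enters because that lemma builds the vector from independent scalar entries and invokes \cref{lem:subgausssum}. So one option is simply to prove the sharper $K^2$ bound and note it implies the stated one; alternatively, if one prefers to keep $C_R$ explicit for parallelism, one can just observe $K^2 \leq C_R K^2$ at the end. There is essentially no obstacle here: the only thing to be careful about is correctly invoking the definition of $\|\vec{x}\|_{\psi_2}$ for a random vector (a supremum over the sphere of scalar sub-Gaussian norms) and confirming $\mat{W}\vec{u}$ stays on the unit sphere, which is immediate from orthonormality of the columns of $\mat{W}$.
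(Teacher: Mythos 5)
Your proof is correct and follows essentially the same route as the paper's: both reduce $\|\mat{W}\t\vec{x}\|_{\psi_2}$ to the supremum of $\|\inner{\vec{x}}{\mat{W}\vec{z}}\|_{\psi_2}$ over $\vec{z}\in\mc{S}^{d-1}$ and use that orthonormal columns keep $\mat{W}\vec{z}$ on the unit sphere $\mc{S}^{n-1}$. Your additional observation is sound: the definition of $\|\vec{x}\|_{\psi_2}$ already yields the sharper bound $K^2$, so the factor $C_R$ (which the paper introduces by invoking \cref{lem:subgausssum} in its last step, and which is at least $1$) is not actually needed here.
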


\begin{proof}[Proof of \cref{lem:subgauss2}]
    The proof follows from 
\[\begin{aligned}\| \mat{W}\t\vec{x} \|_{\psi_2}^2 = & \> \sup_{\vec{z}\in \mc{S}^{d-1}} \| \inner{\mat{W}\t\vec{x}}{\vec{z}} \|_{\psi_2}^2 = \sup_{\vec{z}\in \mc{S}^{d-1}} \| \inner{\vec{x}}{\vec{Wz}} \|_{\psi_2}^2\\
\leq  & \>  \sup_{\vec{y}\in \mc{S}^{n-1}} \| \inner{\vec{x}}{\vec{y}} \|_{\psi_2}^2 \leq C_R K^2. \end{aligned} \] 
In the last step, we have used \cref{lem:subgausssum} and the fact that $\|\vec{y}\|_2 = 1$.
\end{proof}

This result says that if $\vec{z} \in \R^n$ has independent sub-Gaussian entries with norm $K$, then $\mat{W}^\top\vec{z}$ (with $\mat{W}$ having orthonormal columns) is also sub-Gaussian with norm bounded by $K$ up to a constant factor. The next result extends it to sub-Gaussian random vectors.

\subsection{Random matrix bounds} \label{ssec:rmbounds}

\begin{theorem}[Talagrand's comparison inequality]\label{thm:Talagrand}
    Let $X_{\vec{t}}$ for $\vec{t}\in \mc{T} \subset \R^n$ be a random process. Assume that for all $\vec{x},\vec{y}\in \mc{T}$ we have $ \|X_{\vec{x}} - X_{\vec{y}}\|_{\psi_2}\leq K_T\|\vec{x} - \vec{y}\|_2$. 
    Then the following hold:
    \begin{enumerate}
        \item Expectation bound: $\expect{\sup_{\vec{t}\in \mc{T}} X_{\vec{t}}} \leq C_{TE} K_Tw(\mc{T})$.
        \item {Tail bound}: For every $u \geq 0$,  with probability, at least $1-2\exp(-u^2)$$$\sup_{\vec{t}\in \mc{T}}| X_{\vec{t}}| \leq C_{TT}K_T(w(\mc{T}) + u\cdot \rad(\mc{T})).$$
    \end{enumerate}
    Here $C_{TE}$ and $C_{TT}$ are absolute constants.
\end{theorem}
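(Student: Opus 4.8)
The final statement to prove is Talagrand's comparison inequality (Theorem~\ref{thm:Talagrand}). The plan is to derive it from the generic chaining / majorizing measure machinery rather than reprove that machinery from scratch. First I would invoke Talagrand's generic chaining bound for sub-Gaussian increments, which controls $\expect{\sup_{\vec t \in \mc T} X_{\vec t}}$ by $\gamma_2(\mc T, \|\cdot\|_2)$ (Talagrand's $\gamma_2$-functional) up to an absolute constant, precisely because the hypothesis $\|X_{\vec x}-X_{\vec y}\|_{\psi_2} \le K_T\|\vec x - \vec y\|_2$ says the process has sub-Gaussian increments with respect to the metric $K_T\|\cdot\|_2$. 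So the first key step is: $\expect{\sup_{\vec t\in\mc T} X_{\vec t}} \le C\, K_T\, \gamma_2(\mc T,\|\cdot\|_2)$.

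The second step is the majorizing measure theorem (Fernique–Talagrand), which identifies $\gamma_2(\mc T,\|\cdot\|_2)$ with the Gaussian width $w(\mc T)$ up to absolute constants: $c_1 w(\mc T) \le \gamma_2(\mc T,\|\cdot\|_2) \le c_2 w(\mc T)$. Combining these two steps immediately gives the expectation bound with $C_{TE} = C c_2$. For a self-contained write-up I would cite \cite[Theorem 8.5.5 and Theorem 8.6.1]{vershynin2018high} (or the corresponding statements in \cite{talagrand2014upper}) for these two ingredients; in the Supplementary Materials I can additionally track how $C_{TE}$ is assembled from the chaining constant.

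For the tail bound, the plan is to apply the high-probability version of generic chaining, \cite[Theorem 8.5.5]{vershynin2018high}, which states that under the same sub-Gaussian increment condition, for every $u\ge 0$, with probability at least $1-2\exp(-u^2)$,
\[
\sup_{\vec x,\vec y\in\mc T}|X_{\vec x}-X_{\vec y}| \le C\,K_T\big(\gamma_2(\mc T,\|\cdot\|_2) + u\cdot\mathsf{diam}(\mc T,\|\cdot\|_2)\big).
\]
Here I would pick a fixed reference point $\vec t_0\in\mc T$; since by translation-invariance of the hypothesis we may center so that $X_{\vec t_0}=0$ without loss of generality, $\sup_{\vec t}|X_{\vec t}| \le \sup_{\vec x,\vec y}|X_{\vec x}-X_{\vec y}|$. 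Then substitute $\gamma_2(\mc T,\|\cdot\|_2)\le c_2 w(\mc T)$ and $\mathsf{diam}(\mc T,\|\cdot\|_2)=2\,\rad(\mc T)$, which absorbs the factor $2$ into the constant, yielding the claimed form with $C_{TT}$ an absolute constant.

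The main obstacle is not any single calculation but rather the invocation of the two deep black-box results — the generic chaining tail bound and the majorizing measure theorem — which I am treating as known; the only genuine care needed is (i) reducing $\sup_{\vec t}|X_{\vec t}|$ to the increment supremum by centering at a reference point (legitimate since only differences $X_{\vec x}-X_{\vec y}$ are constrained by the hypothesis, so shifting the process by the constant $X_{\vec t_0}$ changes nothing), and (ii) bookkeeping the absolute constants so that $C_{TE},C_{TT}$ are clearly independent of $n$, $\mc T$, and $K_T$. I would relegate the constant-tracking to the Supplementary Materials (Section~\ref{ssec:rmbounds}) as the paper indicates, and keep the main-text proof to the three citations plus the centering remark.
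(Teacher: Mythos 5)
Your route is, at bottom, the same as the paper's: the paper proves this theorem by citation to Vershynin (Corollary 8.6.3 for the expectation bound, Exercise 8.6.5 for the tail bound), and your plan simply unfolds that citation into its standard ingredients — the generic chaining bound for processes with sub-Gaussian increments plus the majorizing measure theorem identifying $\gamma_2(\mc{T},\|\cdot\|_2)$ with $w(\mc{T})$ — together with the constant bookkeeping (and the pointer to explicit constants à la Dirksen) that the paper relegates to its supplement. So there is no genuinely different approach here, only one extra level of unpacking.

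There is, however, one step that does not work as written: the reduction of $\sup_{\vec{t}\in\mc{T}}|X_{\vec{t}}|$ to the increment supremum by ``centering WLOG at $\vec{t}_0$.'' Subtracting the random constant $X_{\vec{t}_0}$ changes the quantity $\sup_{\vec{t}}|X_{\vec{t}}|$; what chaining controls is $\sup_{\vec{t}}|X_{\vec{t}}-X_{\vec{t}_0}|$, and $|X_{\vec{t}_0}|$ itself is not constrained by the increment hypothesis at all. In fact, without an anchoring assumption the statement is false as literally given: take $\mc{T}=\{\vec{t}_1\}$ a single point and $X_{\vec{t}_1}$ with arbitrarily large dispersion (or $X_{\vec{t}}\equiv c$ constant for the expectation bound) — the increment condition holds vacuously for any $K_T$, yet neither conclusion can hold. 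The intended reading is Vershynin's: for the expectation bound the process is mean zero, so $\expect{\sup_{\vec{t}}X_{\vec{t}}}=\expect{\sup_{\vec{t}}(X_{\vec{t}}-X_{\vec{t}_0})}$ and chaining applies (your Step 1 silently uses this); for the tail bound the increment condition is assumed on $\mc{T}\cup\{\vec{0}\}$ with $X_{\vec{0}}=0$, so that $\sup_{\vec{t}}|X_{\vec{t}}|=\sup_{\vec{t}}|X_{\vec{t}}-X_{\vec{0}}|$ and the $\rad(\mc{T})$ term arises from the diameter of $\mc{T}\cup\{\vec{0}\}$. With that anchoring made explicit your two-step derivation is exactly the textbook proof, and nothing downstream in the paper is affected, since the processes to which the theorem is applied (e.g.\ $X_{\vec{u}\vec{v}}=\inner{\mat{S}\vec{u}}{\vec{v}}$) are mean zero and vanish at the origin.
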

\begin{proof}
{See \cite[Corollary 8.6.3, Exercise 8.6.5]{vershynin2018high}.}
\end{proof}
We briefly remark that explicit (but suboptimal constants) are available in~\cite[Remark 3.3(iv)]{dirksen2015tail}.


\begin{proof}[Proof of \Cref{thm:subgaussnorm}]
First, w.l.o.g., let us consider $\|\B{M}\|_2 =  \|\B{N}\|_2  = 1$. Thus $\sr(\B{M}) = \|\B{M}\|_F^2$ and $\sr(\B{N}) = \|\B{N}\|_F^2$.  The main idea is to use Talagrand's comparison inequality, see \Cref{thm:Talagrand}, which was used to establish the sub-Gaussian Chevet inequality~\cite[Theorem 8.7.1]{vershynin2018high}.

\paragraph{Step 1: Setup the stochastic process} Let us define $\mc{U} := {\B{N}} \mc{S}^{N-1}$ and $\mc{V} := \B{M}\t \mc{S}^{M-1}$ (ellipsoids of radius $1$) and consider the random process 
\[ X_{\B{uv}} := \inner{\B{S} \B{u}}{\B{v}}, \qquad \B{u} \in \mc{U}, \B{v}\in \mc{V}. \]
It is easy to check that $X_{\B{uv}}$ has zero mean and 
$${\sup_{\B{u}\in \mc{U}, \B{v}\in \mc{V}} X_{\B{uv}}}  = \sup_{ \B{x} \in \mc{S}^{n-1}, \B{y} \in \mc{S}^{m-1} }\inner{\B{MSN} \B{x}}{\B{y}}  = {\|\B{MSN}\|_2}. $$ 

\paragraph{Step 2: Verify sub-Gaussian increments}
Here, we develop a bound for the sub-Gaussian increment $\|X_{\B{uv}}- X_{\B{wz}}\|_{\psi_2}$ as in the proof of~\cite[Theorem 8.7.1]{vershynin2018high}. Consider $X_{\B{uv}}$ defined in \textit{Step 1}. Then, for $\B{u}, \B{w} \in \mc{U}$ and $\B{v}, \B{z} \in \mc{V}$
\[ 
\|X_{\B{uv}}- X_{\B{wz}}\|_{\psi_2} =
\|\inner{\B{S} \B{u}}{\B{v}} - \inner{\B{S} \B{w}}{\B{z}}\|_{\psi_2} =
\| \sum_{i,j} s_{ij} ({u}_j{v}_i - {w}_j {z}_i)  \|_{\psi_2}.
\]
Since the entries of matrix $\B{S}$ are independent sub-Gaussian random variables with mean zero and sub-Gaussian norm $\|s_{ij}\|_{\psi_2} \leq K$, by the concentration inequality \Cref{lem:subgausssum}
\[ \begin{aligned}\|\sum_{i=1}^m\sum_{j=1}^n s_{ij} ({u}_j{v}_i -  {w}_j  {z}_i)  \|_{\psi_2}  \leq & \>   \sqrt{C_R}  \left( \sum_{i=1}^m\sum_{j=1}^n\|  s_{ij} ( {u}_j {v}_i -  {w}_j  {z}_i)  \|_{\psi_2}^2\right)^{1/2} \\
\leq & \>  \sqrt{C_R}K \| \B {u}\B {v}\t- \B {w}\B {z}\t\|_F.\end{aligned}\]
Following the proof of~\cite[Theorem 8.7.1]{vershynin2018high} we get
$$
\begin{aligned}
\| \B {u}\B {v}\t- \B {w}\B {z}\t\|_F \leq & \> \sqrt{2} \left( \| \B {u} -\B {w}\|_2^2  + \| \B {v}-\B {z}\|_2^2\right)^{1/2}.
\end{aligned}$$
Combining intermediate results  gives the following bound
\[ \|X_{\B{uv}}- X_{\B{wz}}\|_{\psi_2} \leq \sqrt{2C_R}K\left( \| \B{u} -\B{w}\|_2^2  + \| \B{v}-\B{z}\|_2^2\right)^{1/2}. \] 
\paragraph{Step 3: Expectation bound} 
As we have verified the requirements of~\Cref{thm:Talagrand} 
\begin{equation}
    \label{eqn:talagexpect}\expect{\sup_{(\B{u}, \B{v}) \in \mc{U} \times \mc{V}} X_{\B{uv}}}  \leq \sqrt{2C_R} C_{TE}K w(\mc{U}\times \mc{V}).
\end{equation} 
A quick calculation shows that for $\B{g}, \B{h} \sim \mathcal{N}(\B{0},\B{I})$
\begin{equation}
\label{eq:widthUV}
\begin{aligned} w(\mc{U} \times \mc{V}) = & \> \expect{ \sup_{(\B{u}, \B{v}) \in \mc{U} \times \mc{V}}  \inner{\B{g}}{\B{u}} +  \inner{\B{h}}{\B{v}}} =  \> \expect{ \sup_{\B{u} \in \mc{U}} \inner{\B{g}}{\B{u}}} + \expect{ \sup_{\B{v} \in \mc{V}} \inner{\B{h}}{\B{v}}} \\
\leq & \>   \|{\B{M}}\|_F + \|{\B{N}}\|_F.\end{aligned}
\end{equation}
Here, we have used the linearity of expectation values and Lemma~\ref{lem:wT}. Combining bound~\eqref{eq:widthUV} with~\eqref{eqn:talagexpect} proves~\eqref{eqn:subgaussexpect} with $C_{SNE} := \sqrt{2C_R}C_{TE}$. 
\paragraph{Step 4: Tail Bound}
In the following, we use the tail-bound version of Talagrand's comparison inequality~\Cref{thm:Talagrand}. For every $u \geq 0 $
\begin{equation}\label{eqn:talagtail}  \sup_{(\B{u},\B{v}) \in \mc{U} \times \mc{V}}X_{\B{uv}}  \leq \sup_{(\B{u},\B{v}) \in \mc{U} \times \mc{V}}|X_{\B{uv}} |\leq  \sqrt{2C_R}C_{TT}K \Big( w(\mc{U} \times \mc{V}) + u\ \rad(\mc{U} \times \mc{V})\Big),
\end{equation}
with probability at least $1- 2\exp(-u^2)$.
Next, inserting~\eqref{eq:widthUV} 
and the inequality$$\rad(\mc{U}\times \mc{V}) \leq \max\{\|\B{M}\|_2,\|\B{N}\|_2 \} = 1,$$ into~\eqref{eqn:talagtail} yields~\eqref{eqn:subgausstail}, and completes the proof with $C_{SNT} := \sqrt{2C_R}C_{TT}$.
\end{proof}

\begin{theorem}[Matrix deviation inequality]\label{thm:matdev}
    Let $\mat{S} \in \R^{m\times n}$ whose rows $\vec{e}_i^\top\mat{S}$ are independent, isotropic, and sub-Gaussian random vectors in $\R^n$. Then for any subset $\mc{T} \subset \R^n$, we have 
    \[\expect{ \sup_{\vec{x}\in \mc{T}} | \|\mat{Sx} \|_2 - \expect{\|\mat{Sx}\|_2}|}  \leq C_{MDE} K^2 \gamma(\mc{T}).\]
    Here $\gamma(\mc{T})$ is the Gaussian complexity and $K := \max_{1 \leq i \leq  m}\|\vec{e}_i^\top\mat{S}\|_{\psi_2} .$ Furthermore, for any $u \geq 0$, the inequality 
    \[\sup_{\vec{x}\in \mc{T}} | \|\mat{Sx} \|_2 - \sqrt{\|\vec{x}\|_2}|\leq C_{MDT}K^2 [w(\mc{T}) +u\cdot \rad(\mc{T})],   \]
    holds with probability at least $1-\exp(-u^2)$. Here $C_{MDE}$ and $C_{MDT}$ are absolute constants.
\end{theorem}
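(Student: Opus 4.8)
The plan is to derive \cref{thm:matdev} (which is the matrix deviation inequality, cf.\ \cite[Theorem~9.1.1]{vershynin2018high}) from Talagrand's comparison inequality, \cref{thm:Talagrand}, applied to the random process
\[ X_{\vec{x}} \;:=\; \|\mat{Sx}\|_2 - \sqrt{m}\,\|\vec{x}\|_2, \qquad \vec{x}\in\mc{T}. \]
Since the rows $\vec{s}_i := \mat{S}\t\vec{e}_i$ are isotropic, $\expect{\|\mat{Sx}\|_2^2}=m\|\vec{x}\|_2^2$, so $X_{\vec{x}}$ is exactly the deviation to be controlled, and $\sqrt{m}\,\|\vec{x}\|_2$ differs from $\expect{\|\mat{Sx}\|_2}$ by only $\mc{O}(K^2\|\vec{x}\|_2)$. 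The whole argument hinges on one estimate: \emph{the process $X_{\vec{x}}$ has sub-Gaussian increments with parameter $K_T\le C K^2$}, i.e.\ $\|X_{\vec{x}}-X_{\vec{y}}\|_{\psi_2}\le CK^2\|\vec{x}-\vec{y}\|_2$ for all $\vec{x},\vec{y}$. Granting this, \cref{thm:Talagrand} applied to the centered, mean-zero process $\bar X_{\vec{x}}:=\|\mat{Sx}\|_2-\expect{\|\mat{Sx}\|_2}$ (which has the same increments up to an absolute constant and vanishes at $\vec{0}$) yields both conclusions: item~2 gives the tail bound, and item~1, combined with a symmetrization over $\{\pm1\}$ and the elementary facts $w(\mc{T})\le\gamma(\mc{T})$ and $\rad(\mc{T})\le\sqrt{\pi/2}\,\gamma(\mc{T})$ for $\vec 0\in\mc{T}$ (which lets us absorb the $\mc{O}(K^2\|\vec{x}\|_2)$ recentering), gives the two-sided expectation bound with $\gamma(\mc{T})$.

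For the increment estimate I would first treat unit vectors $\vec{x},\vec{y}\in\mc{S}^{n-1}$ using the identity
\[ \|\mat{Sx}\|_2-\|\mat{Sy}\|_2 \;=\; \frac{\|\mat{Sx}\|_2^2-\|\mat{Sy}\|_2^2}{\|\mat{Sx}\|_2+\|\mat{Sy}\|_2}. \]
The numerator equals $\sum_{i=1}^m \inner{\vec{s}_i}{\vec{x}-\vec{y}}\inner{\vec{s}_i}{\vec{x}+\vec{y}}$, a sum of independent sub-exponential terms each of $\psi_1$-norm at most $K^2\|\vec{x}-\vec{y}\|_2\|\vec{x}+\vec{y}\|_2\le 2K^2\|\vec{x}-\vec{y}\|_2$, and --- crucially --- each of mean $\inner{\vec{x}-\vec{y}}{\vec{x}+\vec{y}}=\|\vec{x}\|_2^2-\|\vec{y}\|_2^2=0$ by isotropy and $\|\vec{x}\|_2=\|\vec{y}\|_2$; Bernstein's inequality then shows the numerator is sub-Gaussian of parameter $\lesssim\sqrt{m}\,K^2\|\vec{x}-\vec{y}\|_2$ on the relevant scale. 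The denominator concentrates around $2\sqrt{m}$, so on the favourable event $\{\|\mat{Sx}\|_2+\|\mat{Sy}\|_2\ge\sqrt{m}\}$ division gives the desired $\lesssim K^2\|\vec{x}-\vec{y}\|_2$; on its complement (probability $\le e^{-cm/K^4}$) one falls back on $|\|\mat{Sx}\|_2-\|\mat{Sy}\|_2|\le\|\mat{S}(\vec{x}-\vec{y})\|_2$ and a crude sub-exponential bound on $\|\mat{S}(\vec{x}-\vec{y})\|_2$, the two regimes merging into a single sub-Gaussian tail by a standard truncation argument.

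To pass from the sphere to arbitrary $\vec{x},\vec{y}\in\R^n$, write $\vec{x}=r\B{\theta}$, $\vec{y}=s\B{\phi}$ with $\B{\theta},\B{\phi}\in\mc{S}^{n-1}$, so that $X_{\vec{x}}=r(\|\mat{S}\B{\theta}\|_2-\sqrt{m})$ and
\[ X_{\vec{x}}-X_{\vec{y}} \;=\; r\bigl[(\|\mat{S}\B{\theta}\|_2-\sqrt{m})-(\|\mat{S}\B{\phi}\|_2-\sqrt{m})\bigr]+(r-s)(\|\mat{S}\B{\phi}\|_2-\sqrt{m}). \]
Combining the single-vector estimate $\bigl\|\,\|\mat{S}\B{\phi}\|_2-\sqrt{m}\,\bigr\|_{\psi_2}\le CK^2$ (the $\vec{y}=\vec 0$ case above, equivalently Bernstein for $\|\mat{S}\B{\phi}\|_2^2=\sum_i\inner{\vec{s}_i}{\B{\phi}}^2$), the spherical increment bound applied to $\B{\theta},\B{\phi}$, and the elementary inequalities $|r-s|\le\|\vec{x}-\vec{y}\|_2$ and $r\|\B{\theta}-\B{\phi}\|_2\le 2\|\vec{x}-\vec{y}\|_2$ yields $\|X_{\vec{x}}-X_{\vec{y}}\|_{\psi_2}\le CK^2\|\vec{x}-\vec{y}\|_2$ in full generality (with the convention $X_{\vec 0}=0$).

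The main obstacle is this increment estimate, and within it the delicate point is handling the small-denominator event in the ratio identity so that a clean sub-Gaussian tail survives; everything else is bookkeeping --- tracking the absolute constants through \cref{lem:subgausssum}, Bernstein's inequality, and the two invocations of \cref{thm:Talagrand}, and checking that replacing $\sqrt{m}\|\vec{x}\|_2$ by $\expect{\|\mat{Sx}\|_2}$ costs only absolute constants.
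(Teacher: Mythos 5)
Your proposal is sound in outline, but it is worth noting that the paper does not prove \cref{thm:matdev} at all: the stated proof is a citation to Vershynin (Theorem 9.1.1 and Exercise 9.1.8), and the theorem is used as an imported black box. What you have written is essentially a reconstruction of that textbook proof — Talagrand's comparison inequality (\cref{thm:Talagrand}) applied to the process $X_{\vec{x}}=\|\mat{S}\vec{x}\|_2-\sqrt{m}\|\vec{x}\|_2$, with all the difficulty concentrated in the sub-Gaussian increments estimate $\|X_{\vec{x}}-X_{\vec{y}}\|_{\psi_2}\le CK^2\|\vec{x}-\vec{y}\|_2$, proved on the sphere via the ratio identity, the zero-mean sub-exponential numerator $\sum_i\inner{\vec{s}_i}{\vec{x}-\vec{y}}\inner{\vec{s}_i}{\vec{x}+\vec{y}}$, Bernstein, and a small-denominator fallback, then extended off the sphere by the polar decomposition. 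So relative to the paper you are doing strictly more work, and relative to the cited source you are following the same strategy rather than a new one. The pieces you invoke are all correct, including the recentering step (replacing $\sqrt{m}\|\vec{x}\|_2$ by $\expect{\|\mat{S}\vec{x}\|_2}$ costs $\mc{O}(K^2\|\vec{x}\|_2)$, absorbed via $\rad(\mc{T})\le\sqrt{\pi/2}\,\gamma(\mc{T})$) and the homogeneity bound $r\|\B\theta-\B\phi\|_2\le 2\|\vec{x}-\vec{y}\|_2$. The one place where real work remains hidden is exactly the point you flag: Bernstein gives a mixed $\psi_2/\psi_1$ tail for the numerator, and upgrading the quotient to a genuine $\psi_2$ bound requires the case analysis on whether the deviation exceeds the scale $\sqrt{m}\,\|\vec{x}-\vec{y}\|_2$ (the argument of Vershynin's sub-Gaussian increments lemma); your sketch names this but does not carry it out, so as written it is an accurate proof plan rather than a complete proof. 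Incidentally, the tail bound as printed in the paper contains a typo ($\sqrt{\|\vec{x}\|_2}$ should read $\sqrt{m}\,\|\vec{x}\|_2$, or equivalently $\expect{\|\mat{S}\vec{x}\|_2}$ up to constants), and your formulation implicitly corrects it.
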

\begin{proof}
    See~\cite{vershynin2010introduction}, Theorem 9.1.1. and Exercise 9.1.8.
\end{proof}

\begin{proof}[Proof of \Cref{thm:subgaussnorm2}]
Note that $\B{S}\t$ has independent sub-Gaussian isotropic rows and consider $\mc{T} = \B{M}\t \mc{S}^{M-1}$. The expectation bound  in \Cref{thm:matdev} and the triangle inequality implies
\begin{equation}
\label{eq:Thm15Exp}
\begin{aligned}  \expect{\sup_{\B{x} \in \mc{T}} \|\B{S}\t\B{x}\|_2 } \leq &\>   \expect{\sup_{\B{x} \in \mc{T}}| \|\B{S}\t\B{x}\|_2 - \sqrt{n}\|\B{x}\|_2 |} + \sqrt{n}\sup_{\B{x}\in \mc{T}}\|\B{x}\|_2 \\\leq  & \>C_{MDE}K^2 \gamma(\mc{T}) + \sqrt{n} \ \rad(\mc{T}).\end{aligned} 
\end{equation} 
By an argument similar to the proof of~\Cref{lem:wT}, we have $\gamma(\mc{T}) \leq \|\B{M}\|_F$ and $\rad(\mc{T}) \leq \|\B{M}\|_2$. Plugging these into \eqref{eq:Thm15Exp} gives the expectation bound~\eqref{eqn:subgaussexpect2}.

Furthermore, applying the tail bound of \Cref{thm:matdev}, with probability at least $1 - 2\exp(-u^2)$
\[  \sup_{\B{x} \in \mc{T}}| \|\B{S}\t\B{x}\|_2 - \sqrt{n}\|\B{x}\|_2 | \leq C_{MDT}K^2 [w(\mc{T}) + u\cdot\rad(\mc{T})]. \]
 Therefore, once again using triangle inequality and~\Cref{lem:wT}, we obtain
\[ \begin{aligned} \sup_{\B{x} \in \mc{T}}\|\B{S}\t\B{x}\|_2 \leq & \>  \sqrt{n} \sup_{\B{x} \in T} \|\B{x}\|_2 +  C_{MDT}K^2 [{\|\B{M}\|_F} + u \|\B{M}\|_2] \\ 
\leq & \>\sqrt{n} \|\B{M}\|_2 +  C_{MDT}K^2 [{\|\B{M}\|_F} + u \|\B{M}\|_2],
 \end{aligned} \] 
i.e., the tail bound~\eqref{eqn:subgausstail2}. 
\end{proof}

\section{Other examples}\label{ssec:otherex}
In this section, we give additional examples of distributions to illustrate the bounds presented in Section~\ref{sec:main}. 
\subsection{Independent sub-Gaussian entries}\label{ssec:supp_mat1}
We also consider another distribution which we call \textit{sparse sub-Gaussian distribution} (related to~\cite[Example 2.9]{oymak2018universality}). This distribution may be computationally advantageous in some cases since the matrix-vector products with matrix $\B\Omega \in \R^{n \times \ell}$ may be cheaper because of many zero entries. Let $\alpha \in (0,1]$ be a user-defined sparsity parameter and let $Y$ be a Bernoulli random variable that takes the values $1$ with probability $\alpha$ and $0$ with probability $1-\alpha$. Let $Z$ be a sub-Gaussian random variable with zero mean, unit variance, and sub-Gaussian norm {bounded by} $K_Z$. Then it is easy to verify that $X = \alpha^{-1/2}YZ$ is a sub-Gaussian random variable with zero mean, unit variance, and with sub-Gaussian norm {bounded by} $CK_Z/\sqrt{\alpha}$, where $C$ is some absolute constant. Let $\mat\Omega \in \R^{n \times \ell}$ be a random matrix with independent copies of $X$; then {by~\Cref{thm:subgauss}}, the number of required samples $\ell$ is $\mc{O}(K_Z^4 k/ \alpha^2)$. A special case is the random sparse sign matrix, for which $Z$ is a Rademacher random variable (whose sub-Gaussian norm is an absolute constant), and the number of required samples for this distribution $\ell$ is $\mc{O}(k/ \alpha^2)$. This requirement is similar to the one for the sparse Rademacher random matrix with $\alpha = 1/s$.

\subsection{Independent sub-Gaussian columns}\label{ssec:supp_mat2}

We consider a new random matrix model called {\em sparse sign matrices} Consider a random matrix $\mat\Omega \in \R^{n\times \ell} = \sqrt{\frac{n}{N}} \bmat{\vec{s}_1 & \dots & \vec{s}_\ell}$ where each column has entries $\pm 1$ (with equal probability) situated in $N$ nonzero locations, chosen uniformly at random from $1$ to $n$. This is a slightly different model than~\cite{cohen2016nearly}  in which the author uses independent rows rather than columns. It can be verified using {\Cref{lem:subgausssum}}, applied conditionally on the sparsity pattern, that the sub-Gaussian norm of each column {is bounded by} $K_C = C\sqrt{C_Rn/N}$, where $C$ is an absolute constant.  Therefore, by \cref{thm:subgauss2}, the number of samples required is $\mc{O}(k (n/N)^2)$.

\subsection{Independent bounded columns}\label{ssec:supp_mat3}
We now consider the coordinate distribution. In this case, the columns of $\mat\Omega \in \R^{n \times \ell}$ are drawn uniformly from the set $\{\sqrt{n}\vec{e}_i\}_{i=1}^n$. As mentioned earlier, although the random vectors are sub-Gaussian, the sub-Gaussian norm is bounded by $\sqrt{n}$, making the bounds in \cref{thm:subgauss2} impractical. We now consider the consequence of \cref{thm:genindep} for the coordinate distribution. Given a matrix $\mat{W} \in \R^{n\times d}$ with orthonormal columns,  we define the coherence of $\mat{W}$ as the largest squared row norm (largest leverage score), i.e.,
\begin{equation}\label{eqn:coherence} \mu(\mat{W}) := \max_{1 \leq j \leq n} \|\mat{W}^\top\vec{e}_j\|_2^2. \end{equation}
The coherence takes values in the range {$[\frac{d}{n},1]$}. The two extreme cases $\frac{d}{n}$ and $1$ can be achieved 
when all the columns of $\mat W$ have equal norm, e.g., columns from the Hadamard matrix, and the identity matrices, respectively. Then the boundedness assumption in \cref{thm:genindep} takes the form
\[  \|\mat{V}_k^\top \B\Omega\vec{e}_j\|_2 \leq \sqrt{n \mu(\mat{V}_k)} \quad \mbox{ and } \quad \|\mat{V}_\perp^\top \B\Omega\vec{e}_j\|_2  \leq \sqrt{n \mu(\mat{V}_\perp)}      ,  \]
for all $1 \leq j \leq \ell$. The number of required samples is, therefore, $\ell = \frac{ {2n \mu(\mat{V}_k)}}{\varepsilon^2} \log(2k/\delta). $
This result means that many samples are required if the matrix $\mat{V}_k$ has high coherence. At the very minimum, it requires $\mc{O}(k \log k)$ samples, but a matrix with high coherence could require the number of samples to depend on the dimension $n$. Similar observations can be made in the case of the random vectors drawn uniformly from the columns of an orthogonal matrix.

\subsection{Independent columns with bounded second moments}\label{ssec:supp_mat4}
We first explain why this model is the most general of those we considered. Note that any random matrix that satisfies \cref{def:randommat3} automatically satisfies \cref{def:randommat4} with $K_M = K_k^2 $. Similarly, any random matrix that satisfies \cref{def:randommat2} (and, therefore, \cref{def:randommat1})  also satisfies \cref{def:randommat4} with $K_M = C_RK_C^2 \log(2\ell)$. To see this, we use the approach described in~\cite[Chapter 2.5]{boucheron2013concentration}.  Let $\mat{W}\in \R^{n \times d}$ have orthonormal columns. From \cref{lem:subgauss2}, the columns of $\mat{W}\t\mat{\Omega}$ have the sub-Gaussian norm bounded by $C_RK_C$. This means that for each $1 \leq j \leq \ell$, $\expect{\exp(\|\mat{W}\t\mat\Omega\vec{e}_j\|_2^2/ (C_RK_C^2))} \leq 2$. By Jensen's inequality, with $\lambda = 1/(C_R K_C^2)$, 
\begin{equation}\label{eqn:maximalinequality}\begin{aligned} \exp\left( \lambda \expect{\max_{1 \leq j \leq \ell} \|\mat{W}^\top \mat\Omega\vec{e}_j\|_2^2}\right) \leq & \> \expect{\exp\left(\lambda \max_{1\leq j \leq \ell} \|\mat{W}^\top \mat\Omega\vec{e}_j\|_2^2\right)} \\
= & \> \expect{ \max_{1 \leq j \leq \ell }e^{\lambda \|\mat{W}^\top \mat\Omega\vec{e}_j\|_2^2}} \\
\leq & \sum_{j=1}^\ell \expect{e^{\lambda \|\mat{W}^\top \mat\Omega\vec{e}_j\|_2^2}} \leq 2\ell. \end{aligned} \end{equation}
Taking logarithms, we get $\expect{\max_{1 \leq j \leq \ell} \|\mat{W}^\top \mat\Omega\vec{e}_j\|_2^2} \leq C_R K_C^2 \log(2\ell) $. By applying this result with $\mat{W} = \mat{V}_k$, we get $K_M := C_R K_C^2 \log(2\ell)$. Therefore, \cref{def:randommat4} is the most general case that we consider in this paper.

We now provide a proof for the claim in \cref{cor:alpha}.

\begin{proof}[Proof of \cref{cor:alpha}]

First, we provide a bound for $K_M$ in \cref{def:randommat4}. By the Hanson-Wright inequality~\cite[Proposition 1.1]{Goetze_2021}
\[ \prob{ |\;\|\mat{V}_k\t\mat\Omega\vec{e}_j\|_2^2 - k| \geq t} \leq \exp\left(-\min\left\{\frac{t^2}{M^4k}, \frac{t^{\alpha/2}}{M^\alpha}\right\}\right).\] 
Define the random variable $Z =  \max\limits_{1 \leq j \leq \ell} |\; \|\mat{V}_k\t\mat\Omega\vec{e}_j\|_2^2 - k|.$ By the triangle inequality, $K_M \leq \expect{Z} + k$, so it is sufficient to bound $\expect{Z}$. By the union bound
\[\prob{Z \geq t} \leq \ell\exp\left(-\min\left\{\frac{t^2}{M^4k}, \frac{t^{\alpha/2}}{M^\alpha}\right\}\right) .\]
Then using $\expect{Z} = \int_0^\infty {\prob{Z \geq t}dt}$ since $Z$ is nonnegative, we have for some $\mu > 0$
\begin{equation}\label{eqn:alphainter} \begin{aligned}\expect{Z} \leq & \> \int_0^\mu dt + \int_\mu^\infty \ell\exp\left(-\min\left\{\frac{t^2}{M^4k}, \frac{t^{\alpha/2}}{M^\alpha}\right\}\right)dt \\
\leq & \>  \mu +  {n}\max\left\{\int_\mu^\infty \exp\left(-\frac{t^2}{M^4k}\right) dt,  \int_\mu^\infty  \exp\left(- \frac{t^{\alpha/2}}{M^\alpha}\right) \right\} dt. 
\end{aligned} \end{equation}
We tackle each term separately. First, we use the inequality  $\int_\mu^\infty e^{-t^2/(2\beta)}dt \leq \sqrt{\beta} e^{-t^2/(2\beta)}dt$ (\cite[Proof of Corollary 7.3.2]{tropp2015introduction}) to obtain 
\[\int_\mu^\infty \exp\left(-\frac{t^2}{M^4k}\right) dt \leq M^2\sqrt\frac{k}{2} \exp\left(-\frac{\mu^2}{M^4k}\right).  \] 
Now set $n\sqrt\frac{k}{2} \exp\left(-\frac{\mu^2}{M^4k}\right) = 1$
and solve for $\mu$ to get $\mu =  M^2 \sqrt{k\log(n \sqrt{k/2})} .$ For the other term, we use~\cite[Proof of Lemma A.2]{Goetze_2021}
\[ \int_\mu^\infty  \exp\left(- \frac{t^{\alpha/2}}{M^\alpha}\right)  \leq  \left( \frac{2}{\alpha e}\right)^{2/\alpha} \int_\mu^\infty (t/M^2)^{-1}dt
= M^2\left( \frac{2}{\alpha e}\right)^{2/\alpha} \frac{1}{2\mu^2}. \] 
Since $\mu \geq 1$ by assumptions on $n$ and $M$, it is clear that we can bound this term by $M^2 C_\alpha$ where $C_\alpha := \frac{1}{2}\left( \frac{2}{\alpha e}\right)^{2/\alpha} $. Now combining with~\eqref{eqn:alphainter},  $\expect{Z} \leq \mu + M^2 \left(1 + C_\alpha\right)$. Therefore, $K_M := k + \mu + M^2 \left(1 + C_\alpha\right)$ where $\mu =  M^2\sqrt{k\log(n\sqrt{k/2})}$. Using \cref{thm:indepinexp}, the number of samples is $\ell = \mc{O}((k + M^2\sqrt{k\log(n)} + M^2(2/\alpha)^{2/\alpha})\log k)$.

\end{proof}

We now discuss two other classes of random matrices that satisfy \cref{def:randommat4}.
\paragraph{Uniform on an isotropic convex set} The random vectors from this distribution are uniformly distributed in a bounded convex set $\mc{K}$ with a non-empty interior. Examples of such sets include the cube $\mc{K} =  [-\sqrt{3},\sqrt{3}]^n$ (already discussed in~\cref{ssec:subgauss1}) and the unit $\ell_p$ ball in $\R^n$ for $p \geq 1$. A convex set is said to be in {\em isotropic position} if it is symmetric, with the center of mass at the origin, and satisfying
\[ \frac{1}{\text{vol}(\mc{K})} \int_{\mc{K}} \langle \vec{x}, \vec{y}\rangle^2 d\vec{x} = \|\vec{y}\|_2^2,\]
for all $\vec{y} \in \R^n$. If a random vector $\vec{x}$ that is uniformly drawn from $\mc{K}$ (denoted $\vec{x} \in \mc{U}(\mc{K})$) is isotropic, $\mc{K}$ is said to be in isotropic position. Furthermore, if $\mat{W}\in \R^{{n \times d}}$ has orthonormal columns, it is easy to check that the set $\mat{W}\t\mc{K}$ is in isotropic position in $\R^d$.

For certain isotropic convex sets (called $\psi_2$ bodies), a random vector uniform in the convex set $\mc{K}$ has a sub-Gaussian norm that is $\mathcal{O}(1)$. However, not all random vectors uniformly distributed on a convex set are sub-Gaussian (see~\cite[Section 3.4.4]{vershynin2018high}).

A random matrix $\mat\Omega$ whose columns are drawn uniformly from a convex set in isotropic position satisfies \cref{def:randommat4} as we show now. The first two properties follow readily, whereas, for the third, we note that~\cite[Proof of Corollary 4.1]{rudelson1999random}  for any $\vec{x} \sim \mc{U}(\mat{W}\t\mc{K})$, 
\[ \expect{\exp(\|\vec{x}\|_2^2/ Cd)} \leq 2, \]
where $C$ is an absolute constant. Therefore, using a similar argument as in~\eqref{eqn:maximalinequality}, we have  $\expect{\max_{1 \leq j \leq \ell} \|\mat{V}_k\t \mat\Omega\vec{e}_j\|_2^2} \leq Ck \log (2\ell) \leq Ck \log(2n)$. Therefore, the random matrix satisfies~\Cref{def:randommat4} with {$K_M = Ck \log(2n)$}, and so the number of required samples $\ell$ using \cref{thm:indepinexp} is $\mc{O}(k\log k\log n)$.

\paragraph{Log-concave distributions}
A random vector $\vec{x}$ is said to be log-concave if the probability distribution function is proportional to $\exp(-V(\vec{x}))$, where $\log V(\vec{x})$ is a convex function on $\R^n$. This class includes several cases we have already discussed including normal, exponential, Laplace, and uniform on a convex set. However, it also includes other distributions such as Gamma and extreme value distributions (Gumbel, Frechet, etc.). We derive the minimal number of samples for this class of distributions. Using Paouris concentration inequality~\cite[Section 1.4]{srivastava2013covariance}
\[ \prob{ \|\mat{V}_k\t\mat\Omega\vec{e}_j\|_2^2 \geq t} \leq \exp(-ct), \qquad t \geq Ck,  \] 
for absolute constants $c,C > 0$. Therefore, using the union bound 
\[ \prob{ \max_{1 \leq j \leq \ell} \|\mat{V}_k\t\mat\Omega\vec{e}_j\|_2^2 \geq t} \leq n\exp(-ct), \qquad t \geq Ck.\]
Then, for $\mu \geq Ck$
\[ \expect{\max_{1 \leq j \leq \ell} \|\mat{V}_k\t\mat\Omega\vec{e}_j\|_2^2} \leq \int_0^\mu dt + \int_{\mu}^\infty n\exp(-ct)dt = \mu + \frac{n}{c}\exp(-c\mu).  \] 
Choose $\mu = \max\{Ck, c^{-1} \log(n/c)\}$, so that $K_M := \max\{Ck, c^{-1} \log(n/c)\} + 1$. By~\cref{thm:indepinexp}, the number of samples $\ell$ is $\mc{O}( (k + \log n) \log k). $

\paragraph{Other distributions} As mentioned previously, our analysis is applicable to any distribution with independent isotropic columns and bounded second moments. However, obtaining bounds for $K_M$ may not be straightforward due to the presence of the maximum inside the expectation. To see the challenge, consider the naive inequality 
\[ \expect{\max_{1 \leq i \leq n} \|\mat{V}_k\t\mat\Omega\vec{e}_j\|_2^2 } \leq \sum_{j=1}^\ell \expect{\|\mat{V}_k\t\mat\Omega\vec{e}_j\|_2^2} \leq nk =: K_M, \]
that gives the minimal number of samples as $\ell = \mc{O}(nk\log k)$, which is extremely pessimistic. A bound for $K_M$ can be obtained using a technique known as a maximal inequality (see, e.g.,~\cite[Section 2.5]{boucheron2013concentration}). In practice, we need some stronger requirements on the random matrices (e.g., $\|\mat{V}_k\t\mat\Omega\vec{e}_j\|_2^2$ should satisfy the condition (SR) in~\cite{srivastava2013covariance}) to obtain meaningful bounds for $K_M$. An investigation of this is left to future work.

We can also extend the results to distributions with more than four moments (so-called $4 + \varepsilon$ moments) using the approach in~\cite[Section 1.5]{srivastava2013covariance} to obtain a bound for $K_M$.

\section{Other numerical experiments}\label{sec:othernum}
We give additional numerical experiments to illustrate other choices of random matrices. The setup of these numerical experiments is similar to those in \cref{ssec:exp1}. 
\paragraph{Sparse Rademacher}In this experiment, we consider the Sparse Rademacher distribution with the sparsity parameter $s$. We consider the relative error with increasing sample size for different sparsity parameters $s=1,\sqrt{3},10,50$. The choice of $s=\sqrt{3}$ is motivated by the paper~\cite{achlioptas2003database} and sometimes bears the name Achlioptas distribution.  By \cref{thm:subgauss}, the number of samples is $\mc{O}(s^2k)$, so larger values of $s$ should result in larger errors for the sample size. In \cref{fig:sparserad}, as before, we plot in \cref{fig:sparserad} the mean of the relative error and one standard deviation of the error over $100$ samples. For the \texttt{FastDecay} matrix, all four distributions have similar behavior but higher values of $s$ result in a larger standard deviation around index $15$. The differences between the distributions are more pronounced in the \texttt{ControlledGap} matrix where higher values of $s$ result in both larger means and higher variances. 
\begin{figure}[!ht]
    \centering
    \includegraphics[scale=0.33]{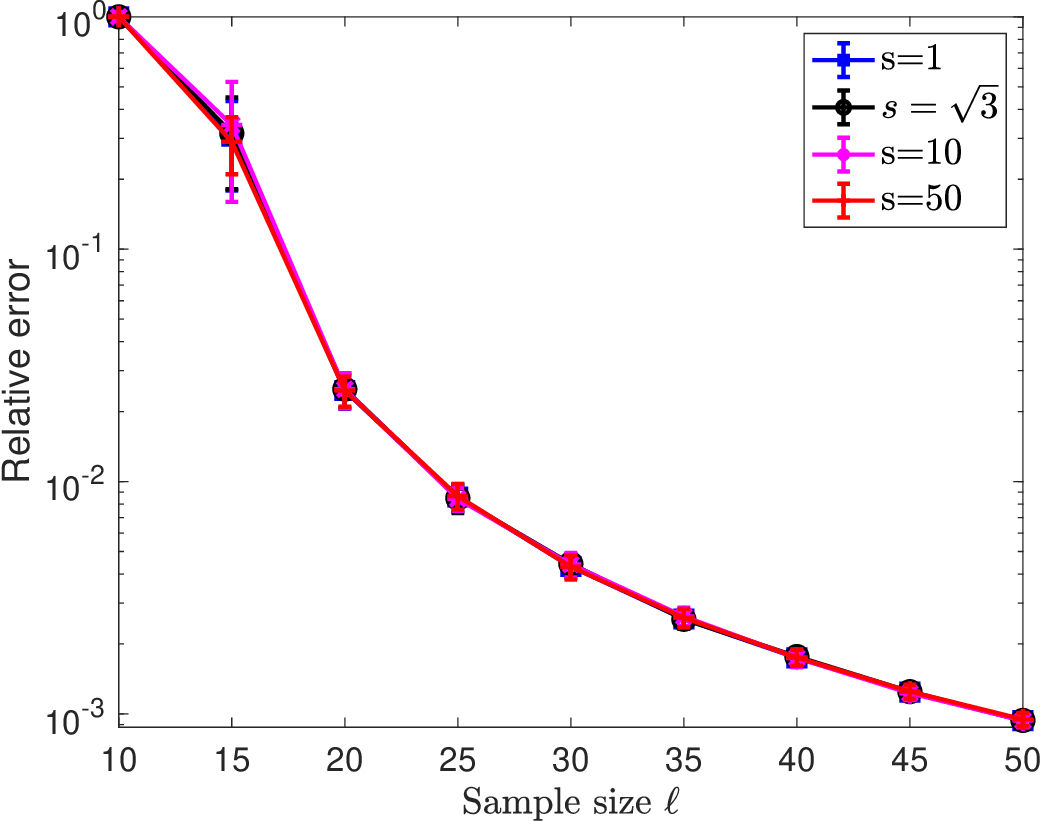}
    \includegraphics[scale=0.33]{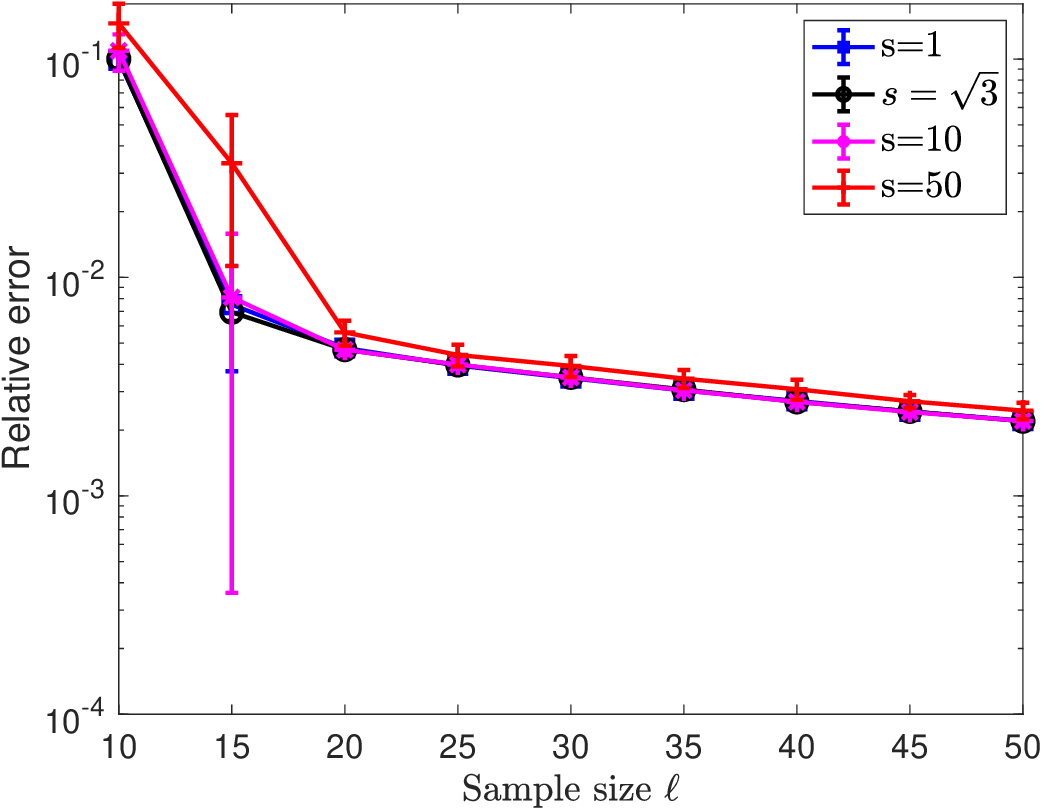}
    \caption{Relative error vs. sample size $\ell$ for the \texttt{FastDecay} (left) and \texttt{ControlledGap} (right). We use random matrices constructed using the sparse Rademacher distribution.}
    \label{fig:sparserad}
\end{figure}

\paragraph{Sparse-sign}In this experiment, we consider the sparse sign distribution with the sparsity parameter $N$. We consider the relative error with increasing sample size for different sparsity parameters $N=1,5,10,50$. By \cref{thm:subgauss}, the number of samples is $\mc{O}(k (n/N)^2)$, so larger values of $N$ should result in lower errors for the same sample size. In \cref{fig:sparserad}, as before, we plot the mean of the relative error and one standard deviation of the error over $100$ samples. For the \texttt{FastDecay} matrix, all four distributions have similar behavior. The differences between the distributions are more pronounced in the \texttt{ControlledGap} matrix where smaller $N$ results in both larger means and higher variances. 
\begin{figure}[!ht]
    \centering
    \includegraphics[scale=0.33]{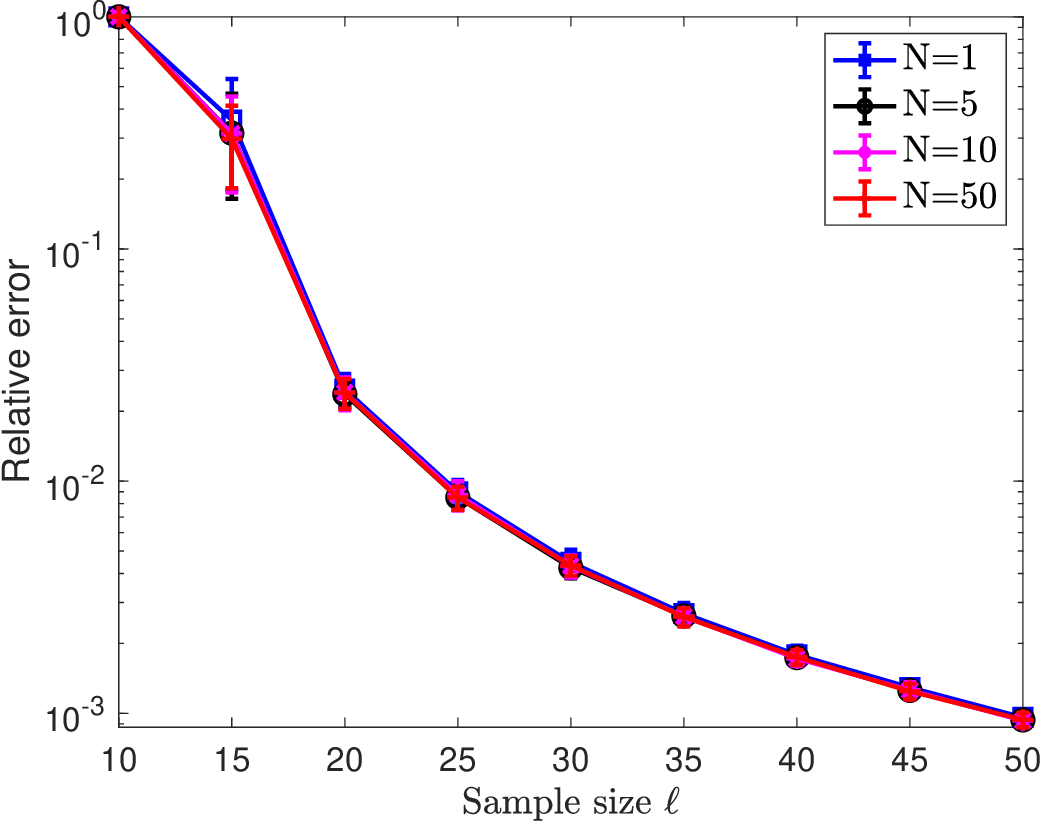}
    \includegraphics[scale=0.33]{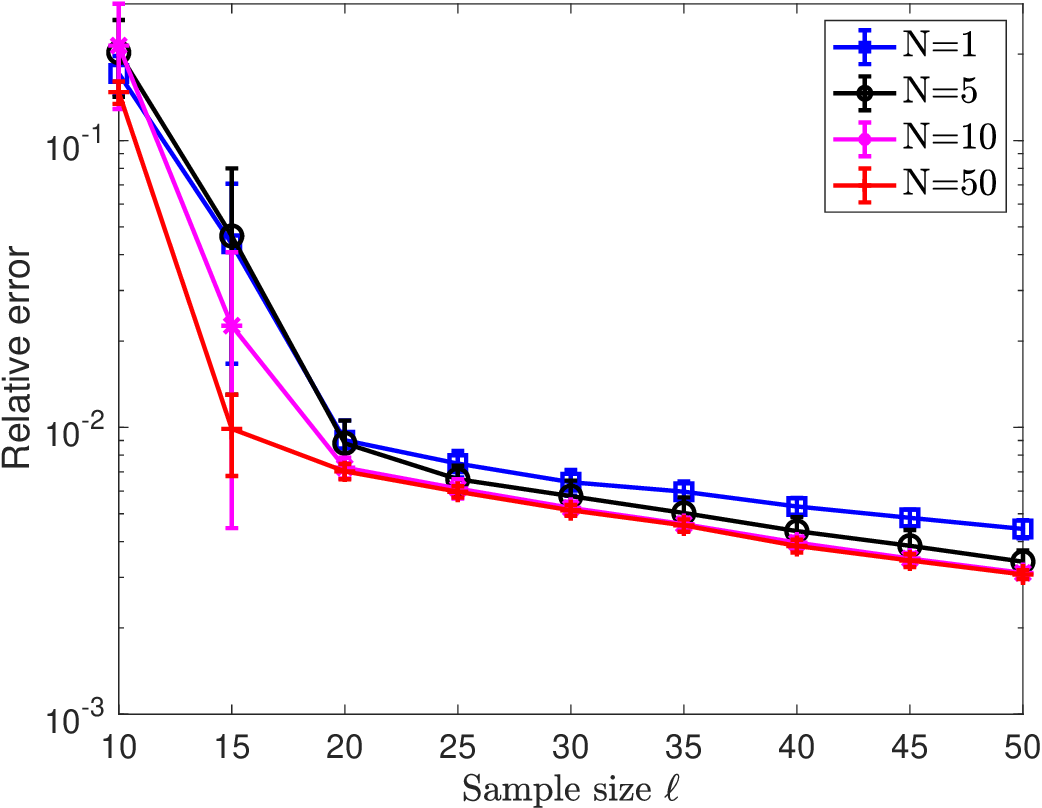}
    \caption{Relative error vs. sample size $\ell$ for the \texttt{FastDecay} (left) and \texttt{ControlledGap} (right). We use random matrices constructed using the sparse sign distribution.}
    \label{fig:sparsesign}
\end{figure}

\paragraph{Other heavy-tailed distribution}
We consider the following distributions: (1) Cauchy distribution, (2) Student's t-distribution $t=10$, (3) centered Gamma distribution $a=3, b= 5$, (4) Stable ($\alpha =1, \beta = \delta = 0, \gamma = 1$). Except for the Gamma distribution, these are considered heavy-tailed distributions. All other settings are as in Experiment 1 (\cref{ssec:exp1}).

\begin{figure}[!ht]
    \centering
    \includegraphics[scale=0.33]{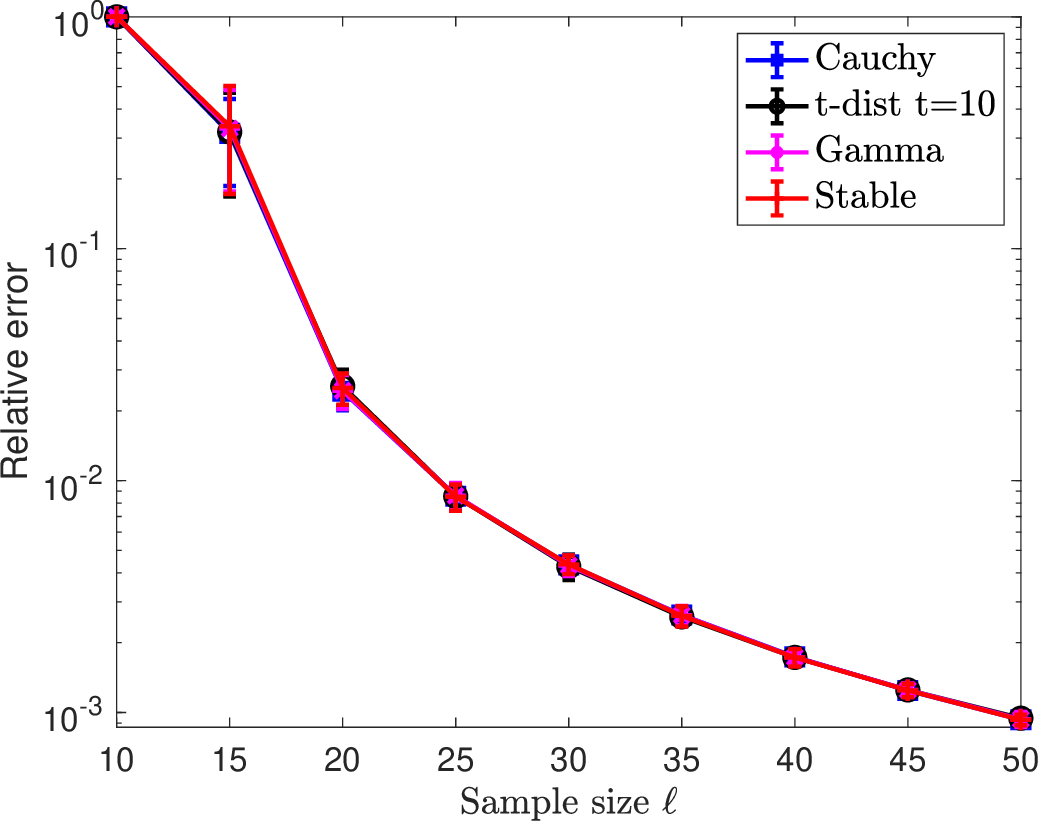}
    \includegraphics[scale=0.33]{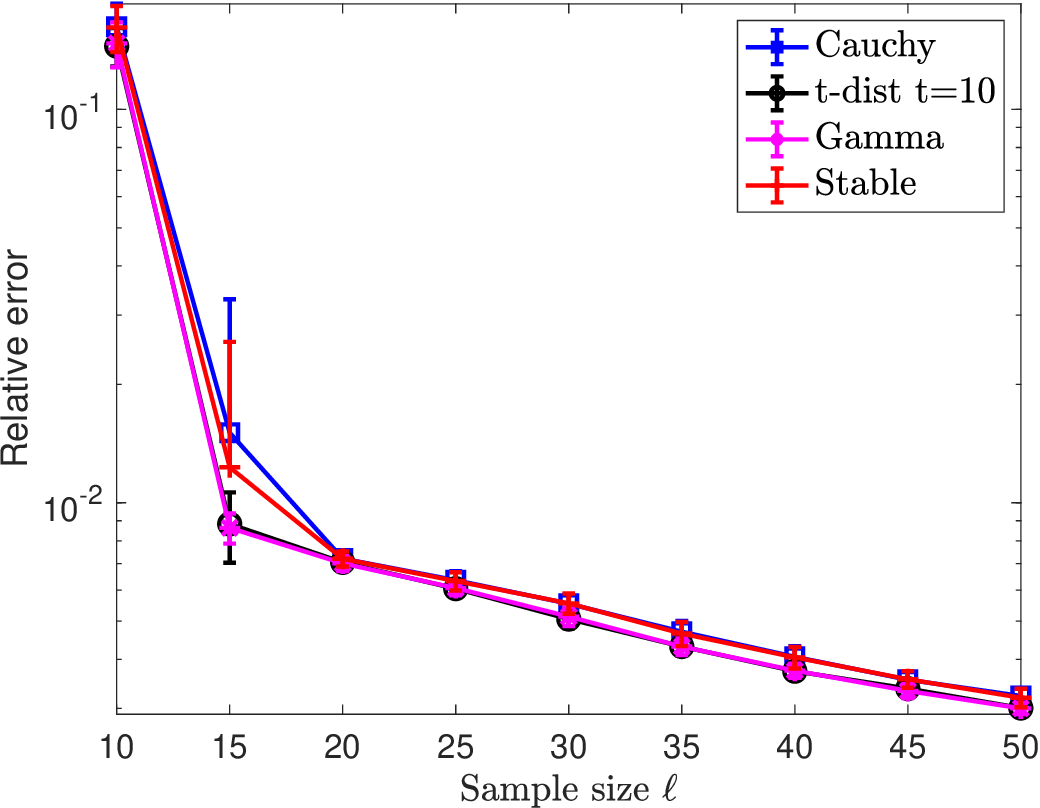}
    \caption{Relative error vs. sample size $\ell$ for the \texttt{FastDecay} (left) and \texttt{ControlledGap} (right). We use random matrices constructed using heavy-tailed distributions.}
    \label{fig:heavy}
\end{figure}

The first two distributions do not even satisfy \cref{def:randommat4} since they do not have finite moments. In \cref{fig:heavytail}, we plot the relative error vs. sample size $\ell$ for the two matrices \texttt{FastDecay} and \texttt{ControlledGap}. We see that the results are comparable across these distributions and to all other distributions previously considered. There is a slightly pronounced standard deviation of the error in the case of \texttt{ControlledGap} around index $15$. This suggests that the requirements of \cref{def:randommat4} are sufficient but not necessary. It would be interesting future work to develop error bounds for random matrices with heavy-tailed distributions.

\section{Nystr\"om Method}
\label{sm:Nystrom}

We consider a randomized Nyström method~\cite{williams2000using,drineas2005nystrom,halko2011finding,gittens2016revisiting} for the low-rank approximation of symmetric positive semidefinite (PSD) matrix $\mat A \in \R^{n \times n}$, i.e., $\mat A \succeq \mat{0}$. In this method, we draw a random matrix $\mat\Omega \in \R^{n\times \ell}$ and compute the sketch $\mat{Y} = \mat{A\Omega}$. Using this sketch, we can compute the Nystr\"om approximation
\[ \mat{A} \approx \mathat{A} := \mat{Y}(\mat\Omega\t\mat{Y})^\dagger \mat{Y}\t.\]
This low-rank approximation can be truncated using the \textit{truncated rank Nyström method} of~\cite{tropp2017fixed}, which uses $\mat{A} \approx [[\mathat{A}]]_k$, a best rank-$k$ approximation of the 
full Nyström approximation. A naive implementation of the Nystr\"om method can be numerically unstable, so we use a more stable version in~\cite[Algorithm 3]{tropp2017fixed}.

We give an example result of the guarantees using the Nystr\"om approximation. Let us partition the eigenvalues of $\mat{A}$ conformally as $$\mat{A} = \bmat{\mat{U}_k & \mat{U}_\perp} \bmat{\mat\Lambda_k \\  & \mat\Lambda_\perp}\bmat{\mat{U}_k\t \\ \mat{U}_\perp\t} ,$$
where the eigenvalues of $\mat{A}$ are arranged in decreasing order and the matrices $\mat\Lambda_k = \diag(\lambda_1,\dots,\lambda_k) \in \R^{k\times k}$ and $\mat\Lambda_\perp = \diag(\lambda_{k+1},\dots,\lambda_n) \in \R^{(n-k)\times (n-k)}$. We derive a result for the random matrix that satisfies~\cref{def:randommat2}; results for other models can be derived similarly.

\begin{theorem}[Nystr\"om, Independent sub-Gaussian columns] Let $\mat{\Omega} \in \R^{n \times \ell}$ be a random matrix satisfying \cref{def:randommat2}. Given user-specified parameters $ 0 < \delta < 1$ and $\varepsilon > 0$,  let the number of samples satisfy~\eqref{eqn:ellbound2}. If $\mathat{A} = \mat{Y}(\mat\Omega\t\mat{Y})^\dagger \mat{Y}\t$ is the Nystr\"om approximation with input $\mat\Omega$, then with a probability of failure at most $\delta$, 
\[\|\mat{A} - \mathat{A} \|^2 \leq \|\mat\Lambda_\perp\|_2  + \frac{1}{(1-\varepsilon)^2\ell}\left( \sqrt{\ell}\|\mat\Lambda_\perp\|_2^{1/2} + C_{\rm CB}K_{\rm C}^2(\sqrt{\trace(\mat\Lambda_\perp) } +  V_\delta\|\mat\Lambda_\perp\|_2^{1/2})  \right)^2. \]
Here, $C_{\rm CS}$ and $C_{\rm CB}$ are absolute constants.
\end{theorem}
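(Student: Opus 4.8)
The plan is to reduce the Nyström error for the PSD matrix $\mat{A}$ to a range-approximation error for its symmetric square root $\mat{B} := \mat{A}^{1/2}$, and then invoke the analysis already carried out for \cref{thm:subgauss2}. Write $\mat{A} = \mat{B}\t\mat{B}$. Since $\mat{Y} = \mat{A}\mat{\Omega} = \mat{B}\t(\mat{B}\mat{\Omega})$ and $\mat{\Omega}\t\mat{Y} = (\mat{B}\mat{\Omega})\t(\mat{B}\mat{\Omega})$, the Nyström approximation factors as $\mathat{A} = \mat{B}\t\mat{P}\mat{B}$, where $\mat{P} := (\mat{B}\mat{\Omega})\big[(\mat{B}\mat{\Omega})\t(\mat{B}\mat{\Omega})\big]^\dagger(\mat{B}\mat{\Omega})\t$ is the orthogonal projector onto $\range(\mat{B}\mat{\Omega})$. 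Because $\mat{I} - \mat{P}$ is an orthogonal projector, this yields the (classical) identity
\[ \mat{A} - \mathat{A} = \mat{B}\t(\mat{I} - \mat{P})\mat{B} = \big((\mat{I}-\mat{P})\mat{B}\big)\t\big((\mat{I}-\mat{P})\mat{B}\big), \qquad \text{so that} \qquad \|\mat{A} - \mathat{A}\|_2 = \|(\mat{I} - \mat{P})\mat{B}\|_2^2 . \]

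Next I would note that $(\mat{I}-\mat{P})\mat{B}$ is precisely the residual produced by \cref{alg:randsvd} applied to the input matrix $\mat{B}$ with $q = 0$ subspace iterations, since then the sketch equals $\mat{B}\mat{\Omega}$ and $\mat{Q}\mat{Q}\t = \mat{P}$. From $\mat{A} = \mat{U}\mat\Lambda\mat{U}\t$ we get $\mat{B} = \mat{U}\mat\Lambda^{1/2}\mat{U}\t$, so the SVD of $\mat{B}$ has singular values $\lambda_i^{1/2}$ and both singular-vector matrices equal to $\mat{U}$; its tail block is $\mat\Lambda_\perp^{1/2}$ and the induced splitting of $\mat{U}\t\mat\Omega$ is $\Oh_1 = \mat{U}_k\t\mat\Omega$, $\Oh_2 = \mat{U}_\perp\t\mat\Omega$. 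Applying the structural bound \cref{thm:StrBound} to $\mat{B}$ with $q = 0$ (for which the gap factor $\gamma_k^{4q}$ is $1$) gives
\[ \|(\mat{I}-\mat{P})\mat{B}\|_2^2 \;\leq\; \|\mat\Lambda_\perp^{1/2}\|_2^2 + \|\mat\Lambda_\perp^{1/2}\Oh_2\Oh_1^\dagger\|_2^2 \;=\; \|\mat\Lambda_\perp\|_2 + \|\mat\Lambda_\perp^{1/2}\Oh_2\Oh_1^\dagger\|_2^2 . \]

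It remains to bound $\|\mat\Lambda_\perp^{1/2}\Oh_2\Oh_1^\dagger\|_2$. The hypotheses required by \cref{thm:subgauss2} --- that $\mat\Omega$ satisfy \cref{def:randommat2} and that $\ell$ obey \eqref{eqn:ellbound2} --- are exactly those assumed in the statement, so that theorem applies to the input matrix $\mat{B}$, whose tail block plays the role of $\mat\Sigma_\perp$ and here equals $\mat\Lambda_\perp^{1/2}$; with probability of failure at most $\delta$ it gives
\[ \|\mat\Lambda_\perp^{1/2}\Oh_2\Oh_1^\dagger\|_2 \;\leq\; \frac{\|\mat\Lambda_\perp^{1/2}\|_2}{(1-\varepsilon)\sqrt\ell}\Big(\sqrt\ell + C_{\rm CB}K_{\rm C}^2\big(\sqrt{\sr(\mat\Lambda_\perp^{1/2})} + V_\delta\big)\Big). \]
Using $\|\mat\Lambda_\perp^{1/2}\|_2 = \|\mat\Lambda_\perp\|_2^{1/2}$ and $\sr(\mat\Lambda_\perp^{1/2}) = \|\mat\Lambda_\perp^{1/2}\|_F^2/\|\mat\Lambda_\perp^{1/2}\|_2^2 = \trace(\mat\Lambda_\perp)/\|\mat\Lambda_\perp\|_2$, distributing the factor $\|\mat\Lambda_\perp\|_2^{1/2}$ into the parenthesis, squaring, and adding the term $\|\mat\Lambda_\perp\|_2$ from the structural bound produces exactly the claimed inequality.

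The routine part of this argument is the final algebraic substitution converting spectral quantities of $\mat{A}^{1/2}$ into eigenvalue quantities of $\mat{A}$ ($\mat\Sigma_\perp \leftrightarrow \mat\Lambda_\perp^{1/2}$, $\trace(\mat\Lambda_\perp) = \|\mat\Lambda_\perp^{1/2}\|_F^2$). The only place where any real work is done is the opening step: establishing the exact factorization $\mat{A} - \mathat{A} = \mat{B}\t(\mat{I}-\mat{P})\mat{B}$, which is what lets a PSD low-rank-approximation statement be handled verbatim by the randomized-SVD analysis behind \cref{thm:subgauss2}. I would also record the harmless caveat that, exactly as in the earlier theorems, this invocation of \cref{thm:StrBound} tacitly assumes the spectral gap $\lambda_k(\mat{A}) > \lambda_{k+1}(\mat{A})$; with $q = 0$ the gap does not otherwise enter the bound.
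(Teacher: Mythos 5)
Your proof is correct, and its overall architecture matches the paper's: both reduce the claim to the deterministic inequality $\|\mat{A}-\mathat{A}\|_2 \leq \|\mat\Lambda_\perp\|_2 + \|\mat\Lambda_\perp^{1/2}\Oh_2\Oh_1^\dagger\|_2^2$ with $\Oh_1=\mat{U}_k\t\mat\Omega$, $\Oh_2=\mat{U}_\perp\t\mat\Omega$, then invoke \cref{thm:subgauss2} for the matrix whose tail block is $\mat\Lambda_\perp^{1/2}$ and finish with the same substitution $\sr(\mat\Lambda_\perp^{1/2})=\trace(\mat\Lambda_\perp)/\|\mat\Lambda_\perp\|_2$. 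The only real difference is how that structural inequality is obtained: the paper conditions on $\Oh_1$ having full row rank and cites the Nystr\"om structural bound of Gittens and Mahoney, whereas you derive it from scratch via the square-root factorization $\mat{A}-\mathat{A}=\mat{B}\t(\mat{I}-\mat{P})\mat{B}$ with $\mat{B}=\mat{A}^{1/2}$ and $\mat{P}$ the projector onto $\range(\mat{B}\mat\Omega)$, so that $\|\mat{A}-\mathat{A}\|_2=\|(\mat{I}-\mat{P})\mat{B}\|_2^2$ and \cref{thm:StrBound} applied to $\mat{B}$ with $q=0$ does the rest. This buys a self-contained argument that stays entirely within the paper's own machinery (and makes transparent why the Nystr\"om error is governed by the same term $\|\mat\Sigma_\perp\Oh_2\Oh_1^\dagger\|_2$ with $\mat\Sigma_\perp=\mat\Lambda_\perp^{1/2}$), at the cost of re-proving a known reduction; the paper's citation is shorter but less self-contained. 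Your caveat about the implicit gap assumption (and, strictly, the full-row-rank event for $\Oh_1$, which the paper conditions on explicitly) is the right thing to record; it does not affect the bound.
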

\begin{proof}

Let $\Oh_2 = \mat{U}_\perp\t\mat\Omega$ and $\Oh_1 = \mat{U}_k\t \mat\Omega$. By \cref{thm:subgauss2}, $\Oh_1$ has full row rank with probability at least $1-\delta$. Condition on this event,  
 and apply~\cite{gittens2016revisiting}, with $q=1$ to get 
\begin{equation}\label{eqn:nystrominter} \|\mat{A} - \mathat{A}\|_2 \leq \|\mat\Lambda_\perp\|_2 + \|\mat\Lambda_\perp^{1/2}\Oh_2{\Oh_1^\dagger}\|_2^2.  \end{equation} 
Therefore, by \cref{thm:subgauss2}, with probability at least $1-\delta$

\[ \|\mat\Lambda_\perp^{1/2}\Oh_2{\Oh_1^\dagger}\|_2 \leq  \frac{ \|\mat\Lambda_\perp\|_2^{1/2}}{(1-\varepsilon)\sqrt{\ell}}\left( \sqrt{\ell} + C_{\rm CB}K_{\rm C}^2(\sqrt{\sr(\mat\Lambda_\perp^{1/2}) } +  {V_\delta})  \right). \]
Use $\sr(\mat\Lambda_\perp^{1/2}) = \trace(\mat\Lambda_\perp)/\|\mat\Lambda_\perp\|_2$ and plug into~\eqref{eqn:nystrominter}.
\end{proof}

In this section, we consider the performance of the Nystr\"om approximation using different random matrices.

\begin{figure}[!ht]
    \centering
    \includegraphics[scale=0.25]{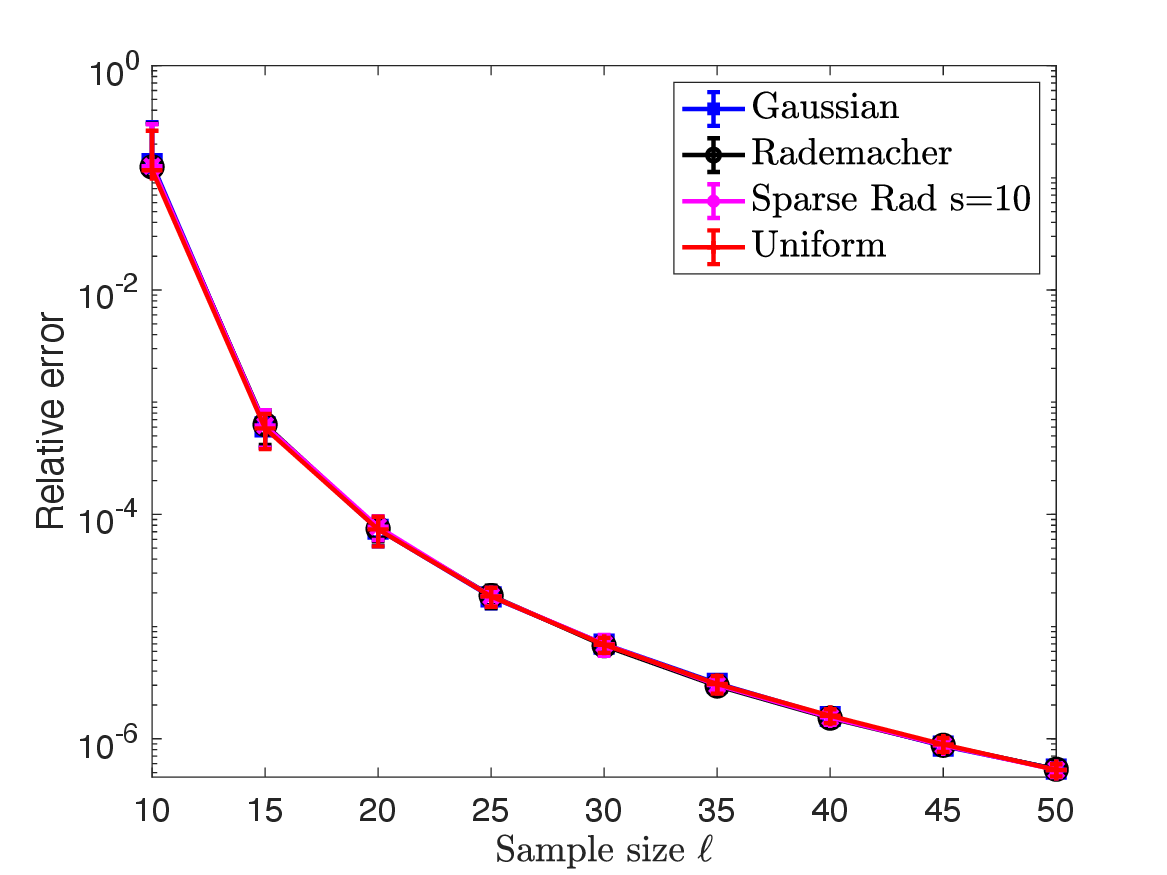}
    \includegraphics[scale=0.25]{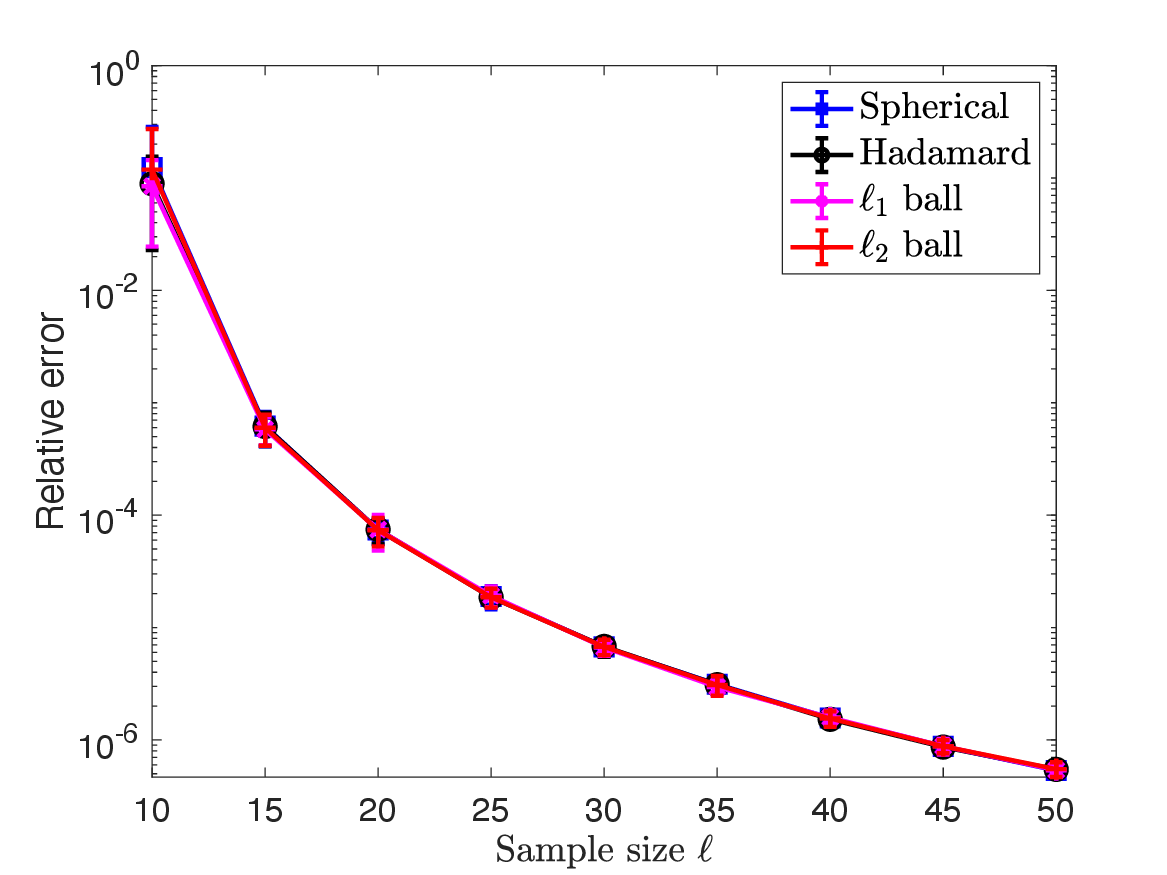}
   \includegraphics[scale=0.25]{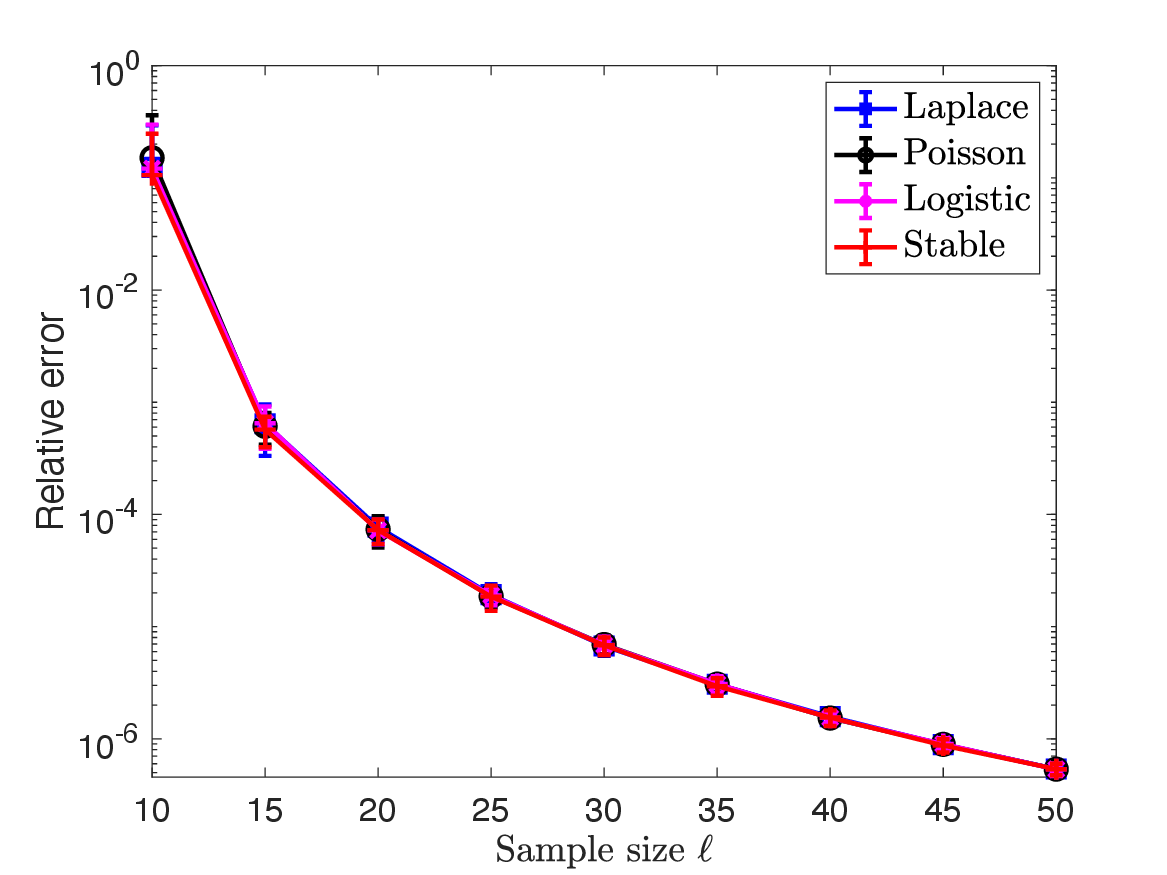}
    \caption{Relative error (RE) vs. sample size $\ell$ for 
    truncated rank Nyström low-rank approximation of the
    \texttt{FastDecayPSD} test matrix for each set of random matrices.}
    \label{fig:fastdecay}
\end{figure}
\paragraph{Test matrices} Here, we will consider the following two PSD test matrices $\mat A\in \R^{n \times n}$.
\begin{enumerate}
 \item \texttt{FastDecayPSD}: The matrix $\mat{A} \in \R^{n\times n}$ is constructed through its eigendecomposition $\mat{A} = \mat{U\Sigma^2 U}\t.$ The matrix $\mat{U}$ is an orthogonal matrix, first generated randomly and then computing the QR factorization. The eigenvalue matrix $\mat\Sigma^2$ is constructed using
    \[ \mat\Sigma = \diag(\underbrace{1,\dots,1}_{r}, 2^{-d},3^{-d},\dots, (n-r+1)^{-d}), \]
    where $n = 256$, $d=2$ is the degree of decay and $r=10$.
    \item \texttt{Abalone}: Abalone is a real-world dataset from the University of California, Irvine (UCI) Machine Learning Repository~\cite{KelLN}. It contains information about $8$ attributes (features, characteristics, physical measurements) for each of the $4177$ abalones, which can be used to predict the age of the abalone. The entries of the matrix $\mat{A}$ are generated based on the graph Laplacian. First we generate the matrix $\mat{W}$ as $[\mat{W}]_{i,j}=\kappa(\vec{x}_i,\vec{x}_j)$ for $1 \leq i,j\leq n$, where $\kappa(\vec{x},\vec{y})$ is the kernel $\exp(-\|\vec{x}-\vec{y}\|_2^2)$. The vectors $\{\vec{x}_i\}_{i=1}^{4177}$ correspond to the feature vectors. The graph Laplacian is then defined as $\mat{L} = \mat{I} - \mat{D}^{-1/2}\mat{WD}^{-1/2}$, where $\mat{D} = \mat{We}$ and $\vec{e}$ is a vector of ones. We take the matrix $\mat{A} = \mat{D}^{-1/2}\mat{WD}^{-1/2}$.  
    
\end{enumerate}
In \cref{fig:fastdecay}, we plot the results for the \texttt{FastDecayPSD} matrix; similarly in \cref{fig:nyst_abalone}, we plot the results for the \texttt{Abalone} matrix. The three panels in each plot correspond to the three sets of distributions discussed in Section~\ref{ssec:setup}; as before, we plot the mean over $100$ realizations and the error bar denotes one standard deviation.  For the \texttt{FastDecayPSD} matrix, the number of samples is between $10$ and $50$ and the error decays sharply. All the distributions we explored have similar performance. For the \texttt{Abalone} matrix, the sample size $\ell$ varies from $100-200.$ Note that since the number of rows ($4177$) in the \texttt{Abalone} matrix is not a power of $2$, we skip the results for the Hadamard distribution. We see that the Nystr\"om algorithm has a similar performance for all the distributions.
\begin{figure}[!ht]
    \centering
    \includegraphics[scale=0.255]{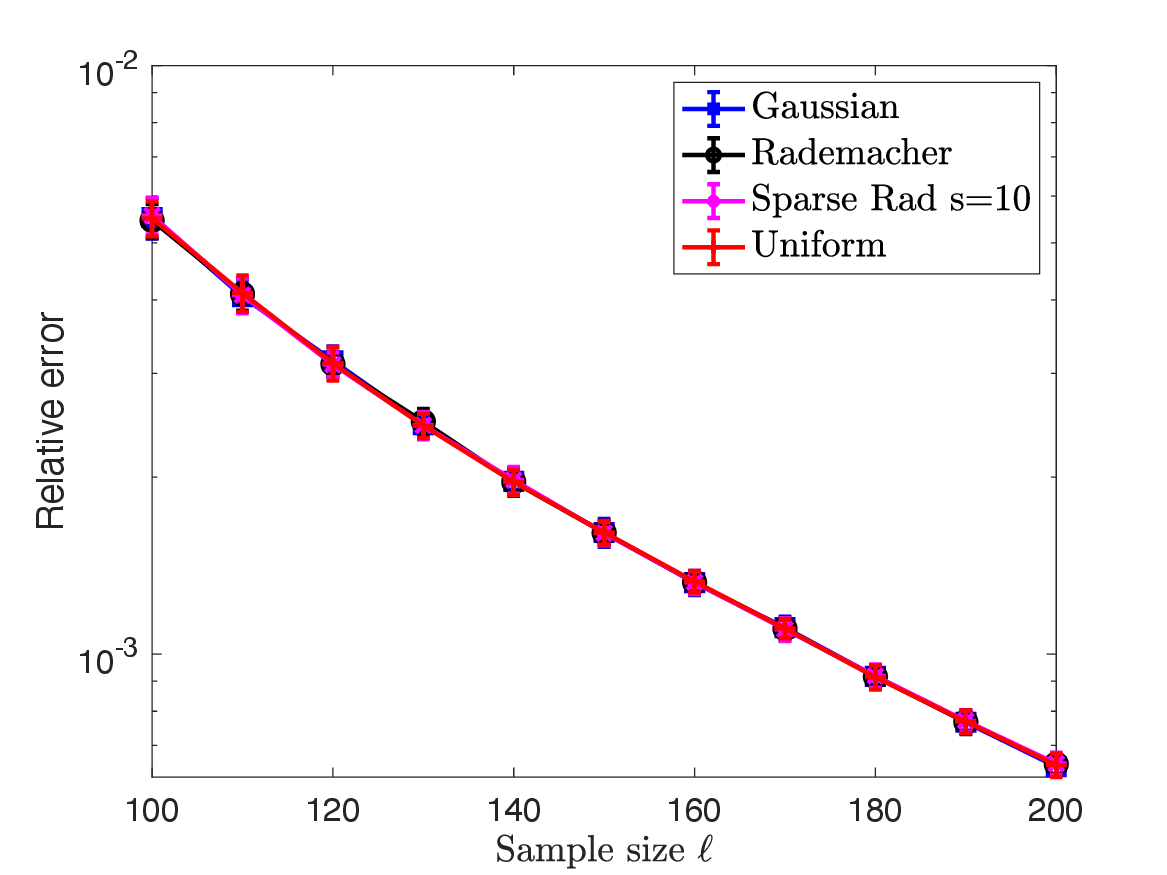}
    \includegraphics[scale=0.255]{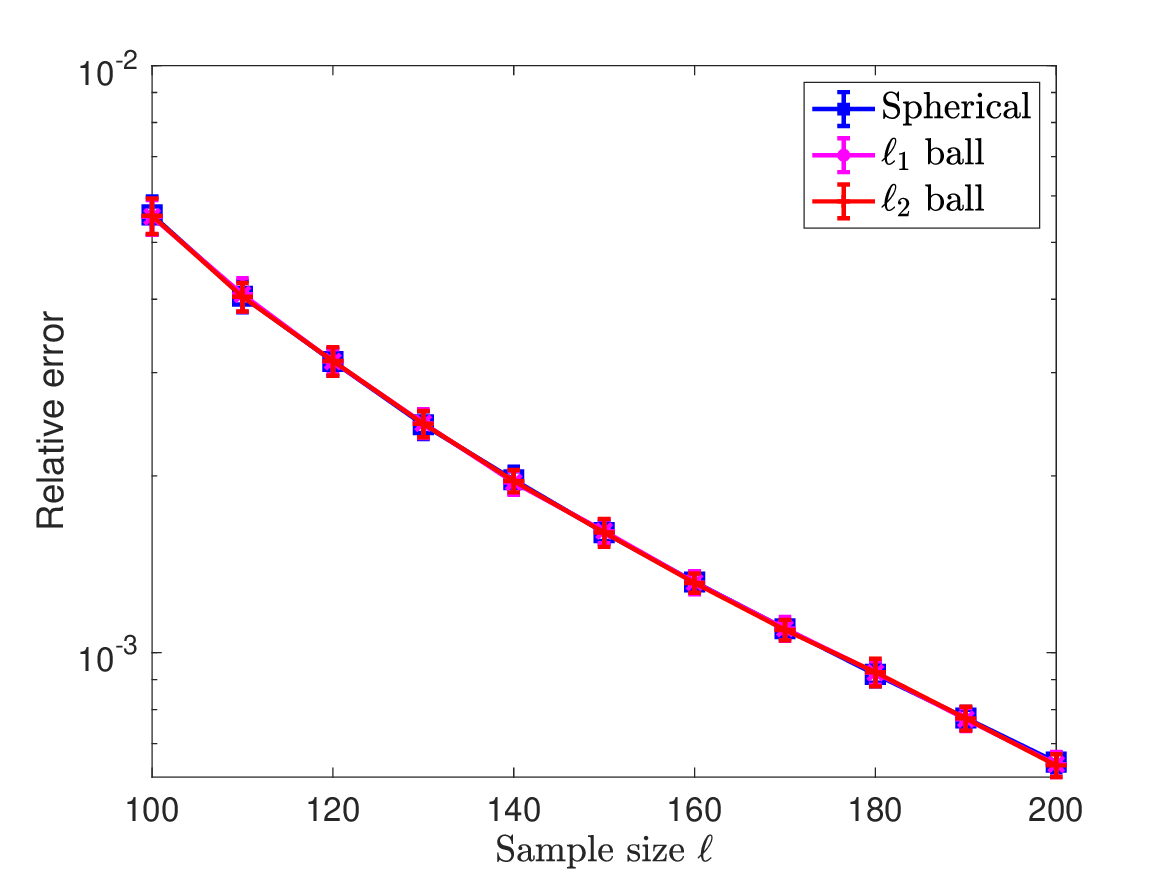}
    \includegraphics[scale=0.255]{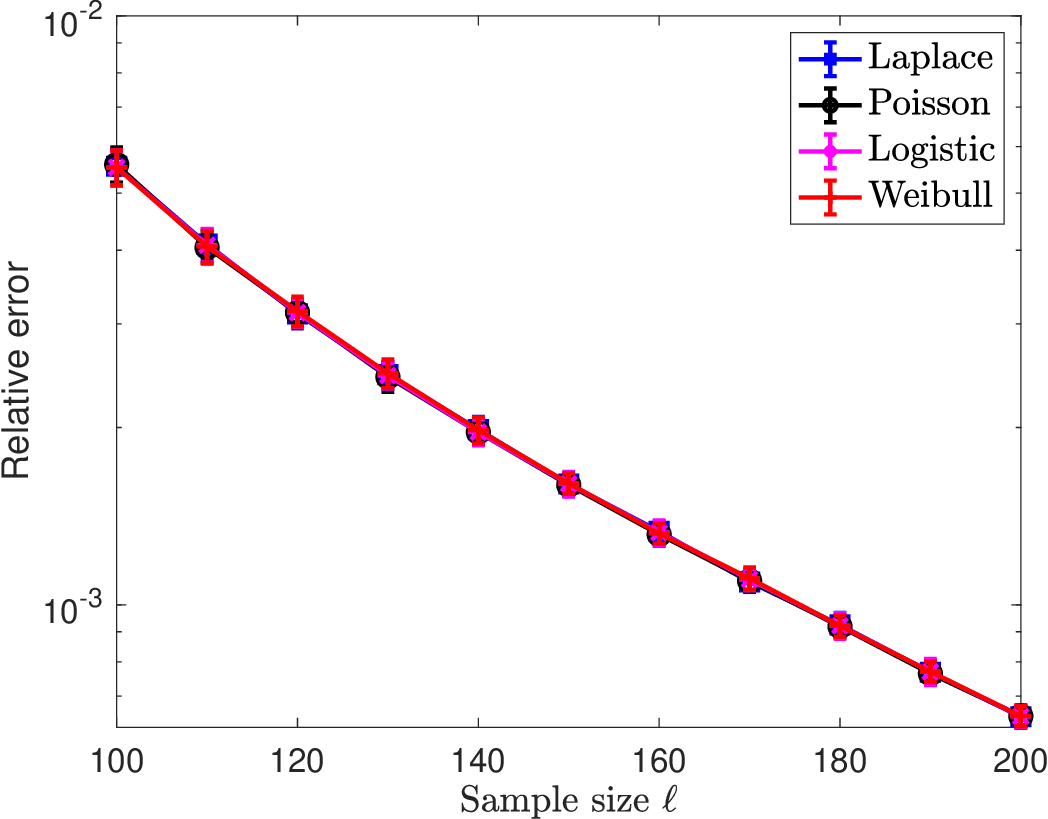}
    \caption{Relative error (RE) vs. sample size $\ell$ for truncated rank Nyström low-rank approximation of the \texttt{Abalone} test matrix for each set of random matrices except the Hadamard distribution.}
    \label{fig:nyst_abalone}
\end{figure}

\bibliography{refs}

\begin{thebibliography}{10}

\bibitem{achlioptas2003database}
D.~Achlioptas.
\newblock Database-friendly random projections: {J}ohnson-{L}indenstrauss with
  binary coins.
\newblock {\em Journal of computer and System Sciences}, 66(4):671--687, 2003.

\bibitem{adamczak2010quantitative}
R.~Adamczak, A.~Litvak, A.~Pajor, and N.~Tomczak-Jaegermann.
\newblock Quantitative estimates of the convergence of the empirical covariance
  matrix in log-concave ensembles.
\newblock {\em Journal of the American Mathematical Society}, 23(2):535--561,
  2010.

\bibitem{al2023randomized}
H.~Al~Daas, G.~Ballard, P.~Cazeaux, E.~Hallman, A.~Mi{\k{e}}dlar, M.~Pasha,
  T.~W. Reid, and A.~K. Saibaba.
\newblock Randomized algorithms for rounding in the tensor-train format.
\newblock {\em SIAM Journal on Scientific Computing}, 45(1):A74--A95, 2023.

\bibitem{bhatia1997matrix}
R.~Bhatia.
\newblock {\em Matrix analysis}, volume 169 of {\em Graduate Texts in
  Mathematics}.
\newblock Springer-Verlag, New York, 1997.

\bibitem{boucheron2013concentration}
S.~Boucheron, G.~Lugosi, and P.~Massart.
\newblock {\em {Concentration Inequalities: A Nonasymptotic Theory of
  Independence}}.
\newblock Oxford University Press, February 2013.

\bibitem{calafiore1998uniform}
G.~Calafiore, F.~Dabbene, and R.~Tempo.
\newblock Uniform sample generation in $\ell_p$ balls for probabilistic
  robustness analysis.
\newblock In {\em Proceedings of the 37th IEEE Conference on Decision and
  Control (Cat. No. 98CH36171)}, volume~3, pages 3335--3340. IEEE, 1998.

\bibitem{charikar2004finding}
M.~Charikar, K.~Chen, and M.~Farach-Colton.
\newblock Finding frequent items in data streams.
\newblock {\em Theoretical Computer Science}, 312(1):3--15, 2004.

\bibitem{che2019randomized}
M.~Che and Y.~Wei.
\newblock Randomized algorithms for the approximations of {T}ucker and the
  tensor train decompositions.
\newblock {\em Advances in Computational Mathematics}, 45(1):395--428, 2019.

\bibitem{clarkson2017low}
K.~L. Clarkson and D.~P. Woodruff.
\newblock Low-rank approximation and regression in input sparsity time.
\newblock {\em Journal of the ACM}, 63(6):1--45, 2017.

\bibitem{cohen2016nearly}
M.~B. Cohen.
\newblock Nearly tight oblivious subspace embeddings by trace inequalities.
\newblock In {\em Proceedings of the twenty-seventh annual ACM-SIAM symposium
  on Discrete algorithms}, pages 278--287. SIAM, 2016.

\bibitem{dirksen2015tail}
S.~Dirksen.
\newblock {Tail bounds via generic chaining}.
\newblock {\em Electronic Journal of Probability}, 20:1 -- 29, 2015.

\bibitem{drineas2018structural}
P.~Drineas, I.~C. Ipsen, E.-M. Kontopoulou, and M.~Magdon-Ismail.
\newblock Structural convergence results for approximation of dominant
  subspaces from block {K}rylov spaces.
\newblock {\em SIAM Journal on Matrix Analysis and Applications},
  39(2):567--586, 2018.

\bibitem{drineas2005nystrom}
P.~Drineas, M.~W. Mahoney, and N.~Cristianini.
\newblock On the {N}ystr{\"o}m {M}ethod for {A}pproximating a {G}ram {M}atrix
  for {I}mproved {K}ernel-{B}ased {L}earning.
\newblock {\em journal of machine learning research}, 6(12), 2005.

\bibitem{edelman2005random}
A.~Edelman and N.~R. Rao.
\newblock Random {M}atrix {T}heory.
\newblock {\em Acta Numerica}, 14:233--297, 2005.

\bibitem{gittens2016revisiting}
A.~Gittens and M.~W. Mahoney.
\newblock Revisiting the {N}ystr{\"o}m method for improved large-scale machine
  learning.
\newblock {\em The Journal of Machine Learning Research}, 17(1):3977--4041,
  2016.

\bibitem{Goetze_2021}
F.~G{\"o}tze, H.~Sambale, and A.~Sinulis.
\newblock {Concentration inequalities for polynomials in
  $\alpha$-sub-exponential random variables}.
\newblock {\em Electronic Journal of Probability}, 26:1 -- 22, 2021.

\bibitem{halko2011finding}
N.~Halko, P.-G. Martinsson, and J.~A. Tropp.
\newblock Finding structure with randomness: {P}robabilistic algorithms for
  constructing approximate matrix decompositions.
\newblock {\em SIAM {R}eview}, 53(2):217--288, 2011.

\bibitem{hallman2022monte}
E.~Hallman, I.~C. Ipsen, and A.~K. Saibaba.
\newblock Monte {C}arlo methods for estimating the diagonal of a real symmetric
  matrix.
\newblock {\em SIAM Journal on Matrix Analysis and Applications},
  44(1):240--269, 2023.

\bibitem{KelLN}
M.~Kelly, R.~Longjohn, and K.~Nottingham.
\newblock The {UCI} {M}achine {L}earning {R}epository.

\bibitem{mackey2014matrix}
L.~Mackey, M.~I. Jordan, R.~Y. Chen, B.~Farrell, and J.~A. Tropp.
\newblock {Matrix concentration inequalities via the method of exchangeable
  pairs}.
\newblock {\em The Annals of Probability}, 42(3):906 -- 945, 2014.

\bibitem{mahoney2011randomized}
M.~W. Mahoney et~al.
\newblock Randomized algorithms for matrices and data.
\newblock {\em Foundations and Trends{\textregistered} in Machine Learning},
  3(2):123--224, 2011.

\bibitem{martinsson2020randomized}
P.-G. Martinsson and J.~A. Tropp.
\newblock Randomized numerical linear algebra: {F}oundations and algorithms.
\newblock {\em Acta Numerica}, 29:403--572, 2020.

\bibitem{minster2022parallel}
R.~Minster, Z.~Li, and G.~Ballard.
\newblock Parallel randomized {T}ucker decomposition algorithms.
\newblock {\em arXiv preprint arXiv:2211.13028}, 2022.

\bibitem{murray2023randomized}
R.~Murray, J.~Demmel, M.~W. Mahoney, N.~B. Erichson, M.~Melnichenko, O.~A.
  Malik, L.~Grigori, P.~Luszczek, M.~Derezi{\'n}ski, M.~E. Lopes, et~al.
\newblock Randomized numerical linear algebra: A perspective on the field with
  an eye to software.
\newblock {\em arXiv preprint arXiv:2302.11474}, 2023.

\bibitem{oymak2018universality}
S.~Oymak and J.~A. Tropp.
\newblock Universality laws for randomized dimension reduction, with
  applications.
\newblock {\em Information and Inference: A Journal of the IMA}, 7(3):337--446,
  2018.

\bibitem{noaa_oi_sst_v2}
R.~W. Reynolds, N.~A. Rayner, T.~M. Smith, D.~C. Stokes, and W.~Wang.
\newblock 2002: An improved in situ and satellite {SST} analysis for climate.
\newblock {\em Journal of Climate}, 15:1609--1625, 2002.
\newblock \url{https://psl.noaa.gov/data/gridded/data.noaa.oisst.v2.html}.

\bibitem{rudelson1999random}
M.~Rudelson.
\newblock Random vectors in the isotropic position.
\newblock {\em Journal of Functional Analysis}, 164(1):60--72, 1999.

\bibitem{saad2011numerical}
Y.~Saad.
\newblock {\em Numerical methods for large eigenvalue problems: revised
  edition}, volume~66.
\newblock SIAM, 2011.

\bibitem{saibaba2017randomized}
A.~K. Saibaba, A.~Alexanderian, and I.~C. Ipsen.
\newblock Randomized matrix-free trace and log-determinant estimators.
\newblock {\em Numerische Mathematik}, 137(2):353--395, 2017.

\bibitem{sambale2022some}
H.~Sambale.
\newblock Some notes on concentration for $\alpha$-subexponential random
  variables.
\newblock In {\em High Dimensional Probability IX: The Ethereal Volume}, pages
  167--192. Springer, 2023.

\bibitem{srivastava2013covariance}
N.~Srivastava and R.~Vershynin.
\newblock {Covariance estimation for distributions with ${2+\varepsilon}$
  moments}.
\newblock {\em The Annals of Probability}, 41(5):3081 -- 3111, 2013.

\bibitem{tao2012topics}
T.~Tao.
\newblock {\em Topics in random matrix theory}, volume 132 of {\em Graduate
  Studies in Mathematics}.
\newblock American Mathematical Soc., 2012.

\bibitem{tropp2011improved}
J.~A. Tropp.
\newblock Improved analysis of the subsampled randomized {H}adamard transform.
\newblock {\em Advances in Adaptive Data Analysis}, 3(01n02):115--126, 2011.

\bibitem{tropp2015introduction}
J.~A. Tropp.
\newblock An introduction to matrix concentration inequalities.
\newblock {\em Foundations and Trends{\textregistered} in Machine Learning},
  8(1-2):1--230, 2015.

\bibitem{tropp2017fixed}
J.~A. Tropp, A.~Yurtsever, M.~Udell, and V.~Cevher.
\newblock Fixed-rank approximation of a positive-semidefinite matrix from
  streaming data.
\newblock {\em Advances in Neural Information Processing Systems}, 30, 2017.

\bibitem{tropp2019streaming}
J.~A. Tropp, A.~Yurtsever, M.~Udell, and V.~Cevher.
\newblock Streaming low-rank matrix approximation with an application to
  scientific simulation.
\newblock {\em SIAM Journal on Scientific Computing}, 41(4):A2430--A2463, 2019.

\bibitem{vershynin2010introduction}
R.~Vershynin.
\newblock Introduction to the non-asymptotic analysis of random matrices.
\newblock {\em arXiv preprint arXiv:1011.3027}, 2010.

\bibitem{vershynin2018high}
R.~Vershynin.
\newblock {\em High-dimensional probability: {A}n introduction with
  applications in data science}, volume~47.
\newblock Cambridge University Press, 2018.

\bibitem{williams2000using}
C.~Williams and M.~Seeger.
\newblock Using the {N}ystr{\"o}m method to speed up kernel machines.
\newblock {\em Advances in Neural Information Processing Systems}, 13, 2000.
\newblock Proceedings of the 2000 Conference.

\bibitem{woodruff2014sketching}
D.~P. Woodruff.
\newblock Sketching as a tool for numerical linear algebra.
\newblock {\em Foundations and Trends{\textregistered} in Theoretical Computer
  Science}, 10(1--2):1--157, 2014.

\end{thebibliography}
\bibliographystyle{abbrv}
\end{document}